\documentclass[reqno]{amsart}
\usepackage{amscd}
\usepackage{amssymb}
\usepackage{amsmath}
\usepackage{amsthm}
\usepackage{latexsym}
\usepackage{upref}
\usepackage{epsfig}
\usepackage{mathrsfs}
\usepackage{float}
\usepackage[colorlinks,urlcolor=black]{hyperref}
\usepackage{graphics,graphicx,color}
\usepackage{floatflt}
\usepackage{indentfirst}
\usepackage[cmtip,arrow]{xy}
\usepackage{pb-diagram,pb-xy}
\usepackage{cite}

\newtheorem{thm}{Theorem}[section]

\newtheorem{cor}[thm]{Corollary}
\newtheorem{lem}[thm]{Lemma}
\theoremstyle{definition}

\newtheorem{exa}[thm]{Example}
\numberwithin{equation}{section}


\begin{document}
\title{On the bisymmetric nonnegative inverse eigenvalue problem}

\author{Somchai Somphotphisut}
\address{Department of Mathematics and Computer Science, Faculty of Science,
Chulalongkorn University, Phyathai Road, Patumwan, Bangkok 10330,
Thailand} \email{somchai.so@student.chula.ac.th \textrm{and
}kwiboonton@gmail.com}

\author{Keng Wiboonton}
\address{}
\email{}

\begin{abstract}
We study the \textit{bisymmetric nonnegative inverse eigenvalue problem} (BNIEP). This problem is the problem of finding the necessary and sufficient conditions on a list of $n$ complex numbers to be a spectrum of an $n \times n$ bisymmetric nonnegative matrix. Most recently, some of the sufficient conditions for the BNIEP are given by Julio and Soto in \cite{julio}. In this article, we give another proof of one result (Theorem 4.3) in \cite{julio} and we obtain the result very similar to the one (Theorem 4.2) in \cite{julio} using a different method to construct our desired bisymmetric nonnegative matrix. We also give some sufficient conditions for the BNIEP based on the sufficient conditions for the \textit{nonnegative inverse eigenvalue problem} (NIEP) given by Borobia in \cite{borobia}. We give the condition that is both necessary and sufficient for the BNIEP when $n \leq 4$ and then we show that for $n = 6$, the BNIEP and the \textit{symmetric nonnegative eigenvalue problem} (SNIEP) are different. Moreover, some sufficient conditions for the bisymmetric \textit{positive} inverse eigenvalue problem are provided. Finally, we give a new result on a sufficient condition for the BNIEP with the prescribed diagonal entries.
\end{abstract}
\subjclass[2010]{15A18} \keywords{Bisymmetric matrices, Bisymmetric nonnegative inverse eigenvalue problem}
\maketitle

\section{Introduction}

The \textit{Nonnegative Inverse Eigenvalue Problem} was started when Kolmokorov \cite{Kol} asked the following question in 1937: When is a given complex number an eigenvalue of some nonnegative matrix? Later, in 1949, Suleimanova \cite{sul} extended this question to the problem of determining necessary and sufficient conditions for a list of $n$ complex numbers to be the eigenvalues of an $n \times n$ nonnegative matrix which nowadays is called the \textit{Nonnegative Inverse Eigenvalues Problem}(NIEP). The set of $n$ complex numbers is said to be \textit{realizable} if there is an $n \times n$ nonnegative matrix having these complex numbers as its eigenvalues. In particular, if we consider the list of $n$ real numbers then the problem is called the \textit{Real Nonnegative Inverse Eigenvalue Problem} (RNIEP). Sufficient conditions for the RNIEP have been studied by Suleimanova \cite{sul}, H. Perfect \cite{per}, P.G. Ciarlet \cite{ciarlet}, Kellogg \cite{Kel}, A. Brobia \cite{borobia}, Salzmann \cite{salz}, G.W. Soules \cite{soules}, G.Wuwen \cite{wuwen}, R.L. Soto and O.Rojo \cite{soto,soto2}. In this section, we collect several important sufficient conditions for the NIEP.

\bigskip
\begin{thm} (Suleimanova \cite{sul}, 1949) \label{suleimanova}

If $\lambda_0 \geq 0 \geq \lambda_1 \geq \ldots \geq \lambda_n$ are real numbers such that $\displaystyle \sum_{i=0}^{n} \lambda_i \geq 0$ then $\lbrace \lambda_0, \lambda_1, \ldots, \lambda_n \rbrace$ is realizable.
\end{thm}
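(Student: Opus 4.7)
The plan is to construct explicit nonnegative realizations by induction on $n$. Set $t_i = -\lambda_i \geq 0$ for $i \geq 1$, so the hypothesis becomes the Perron-type dominance $\lambda_0 \geq t_1 + \cdots + t_n$. For $n = 0$ the $1 \times 1$ matrix $[\lambda_0]$ works. For the inductive step, pass to the reduced list $\mathcal{L}' = \{\lambda_0 + \lambda_n,\, \lambda_1, \ldots, \lambda_{n-1}\}$; it again satisfies the hypotheses, since its sum is $\sum_{i=0}^n \lambda_i \geq 0$, its largest element $\lambda_0 + \lambda_n = \lambda_0 - t_n$ is nonnegative (because $\lambda_0 \geq t_n$), and the non-increasing order of the non-positive part is preserved. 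By induction there is a nonnegative $n \times n$ matrix $A'$ realizing $\mathcal{L}'$; since $\mu := \lambda_0 + \lambda_n$ is at least as large as each $t_i$ (from $\lambda_0 \geq t_i + t_n$), it is the spectral radius of $A'$, and Perron--Frobenius supplies a nonnegative eigenvector $v$ of $A'$ at $\mu$. I fix an index $k$ with $v_k > 0$.

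The extension step is the bordering
\[
A = \begin{pmatrix} A' & v \\ c\, e_k^T & 0 \end{pmatrix}, \qquad c := \frac{\lambda_0\, t_n}{v_k} \geq 0,
\]
which is entrywise nonnegative. The two-dimensional subspace $V = \operatorname{span}\{(v;0),\,(0;1)\}$ is $A$-invariant; in the basis $\{(v;0),(0;1)\}$ the restriction $A|_V$ has trace $\mu$ and determinant $-c v_k = \lambda_0 \lambda_n$, so its eigenvalues are precisely $\lambda_0$ and $\lambda_n$. With respect to $V$ and a complement, $A$ has block upper-triangular form; the induced map on $\mathbb{R}^{n+1}/V$ is conjugate, via the natural projection onto the first $n$ coordinates, to the quotient of $A'$ by its Perron eigenspace, so it has characteristic polynomial $\prod_{i=1}^{n-1}(x - \lambda_i)$. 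Multiplying by the contribution from $V$ yields
\[
\det(xI - A) = (x - \lambda_0)(x - \lambda_n)\prod_{i=1}^{n-1}(x - \lambda_i) = \prod_{i=0}^n (x - \lambda_i),
\]
as required.

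The crux of the argument is the inductive reduction: the hypothesis $\sum \lambda_i \geq 0$ is exactly what guarantees that $\mathcal{L}'$ again satisfies the hypothesis of the theorem. The bordering itself is explicit and its nonnegativity is immediate from the dominance inequality. The only minor technicality is the degenerate case in which the Perron eigenvalue of $A'$ fails to be simple (for instance when several $\lambda_i$ equal zero); this is handled either by a small perturbation of the $\lambda_i$'s followed by a limiting argument, or by inspecting such degenerate lists directly.
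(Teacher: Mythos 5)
Your proof is correct. Note that the paper does not actually prove Theorem \ref{suleimanova}: it quotes it from Suleimanova, and the closest argument it contains is the proof of the bisymmetric strengthening, Theorem \ref{impsul}, which also proceeds by induction but merges the two \emph{least} negative numbers into the Perron root (passing to $\lambda_0+\lambda_1+\lambda_2,\lambda_3,\ldots,\lambda_n$) and then splits them off again with the symmetric Perron-eigenvector bordering of Lemma \ref{lep}/Corollary \ref{cor1}, precisely so as to stay inside the class of bisymmetric matrices. You instead merge only $\lambda_n$ into $\lambda_0$ and split it off with the non-symmetric bordering $\begin{pmatrix} A' & v\\ c\,e_k^T & 0\end{pmatrix}$; this is simpler and fully explicit, and the invariant-subspace computation (trace $\mu$, determinant $\lambda_0\lambda_n$ on $V$, characteristic polynomial $\prod_{i=1}^{n-1}(x-\lambda_i)$ on the quotient) is sound, but the resulting matrix is in general neither symmetric nor bisymmetric --- which is all the quoted statement asks (plain realizability), though it would not serve the refinements the paper pursues. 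Two small remarks: your closing worry about the Perron root of $A'$ failing to be simple is a non-issue, since every nonnegative matrix, irreducible or not, has a nonnegative eigenvector at its spectral radius, and your quotient argument nowhere uses simplicity (the characteristic polynomial of the map induced on $\mathbb{R}^n/\operatorname{span}\{v\}$ is that of $A'$ divided by $(x-\mu)$ whatever the multiplicity of $\mu$); and the phrase ``quotient of $A'$ by its Perron eigenspace'' should read ``quotient by $\operatorname{span}\{v\}$'', since the full eigenspace may have dimension greater than one and would create a dimension mismatch.
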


\bigskip

\begin{thm} (Kellog \cite{Kel}, 1971) \label{kellogg}

If $\lambda_0 \geq \lambda_1 \geq \ldots \geq \lambda_M \geq 0 > \lambda_{M+1} \geq \lambda_{M+2} \geq \ldots \geq \lambda_n$ are real numbers and $\displaystyle K=\lbrace i \in \lbrace 1,2,\ldots, \emph{min}\lbrace M,n-M \rbrace \rbrace \vert \lambda_i + \lambda_{n-i+1} < 0 \rbrace$, and if

\qquad \qquad \qquad $\displaystyle \lambda_0 + \sum_{i \in K, i<k} (\lambda_i + \lambda_{n-i+1})+\lambda_{n-k+1} \geq 0$ \qquad for all $k \in K,$

\qquad \qquad \qquad $\displaystyle \lambda_0 + \sum_{i \in K} (\lambda_i + \lambda_{n-i+1})+ \sum_{j=M+1}^{n-M} \lambda_j \geq 0$
\\
then $\lbrace \lambda_0, \lambda_1, \ldots, \lambda_n \rbrace \text{ is realizable}.$
\end{thm}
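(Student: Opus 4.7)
I would prove the theorem by exhibiting an explicit $(n+1)\times(n+1)$ nonnegative matrix $A$ whose spectrum is $\{\lambda_0,\ldots,\lambda_n\}$. The guiding idea is that a pair $(\lambda_i,\lambda_{n-i+1})$ with $i\in K$ has strictly negative sum and therefore cannot by itself be the spectrum of any $2\times 2$ nonnegative matrix; each such \emph{bad} pair must instead be subsidised by a portion of the Perron-like eigenvalue $\lambda_0$. Read in this light, the two inequalities in the hypothesis are precisely the book-keeping conditions that make all the required subsidies affordable: the first is a running-account inequality saying that at every stage of the construction enough positive mass remains to cover the next bad pair, and the second says that a nonnegative balance still survives once the middle eigenvalues $\lambda_{M+1},\ldots,\lambda_{n-M}$ have also been absorbed.

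Concretely, the construction I would write down has three types of building blocks. For every index $i\in\{1,\ldots,\min(M,n-M)\}\setminus K$ the pair $(\lambda_i,\lambda_{n-i+1})$ has nonnegative sum and is realised by an elementary $2\times 2$ nonnegative matrix. For every $i\in K$ the pair is realised jointly with a positive subsidy $\mu_i>0$ by a $3\times 3$ nonnegative matrix with spectrum $\{\mu_i,\lambda_i,\lambda_{n-i+1}\}$, which one can always construct in a bordered shape provided $\mu_i\geq -(\lambda_i+\lambda_{n-i+1})$. Finally, the middle eigenvalues $\lambda_{M+1},\ldots,\lambda_{n-M}$ are placed either as $1\times 1$ diagonal entries (for those that are $\geq 0$) or absorbed with a small portion of the Perron budget in the same bordered fashion.

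The heart of the proof is the assembly. A naive direct sum of the pieces would turn each subsidy $\mu_i$ into a genuine eigenvalue, which is wrong, so instead I would glue the blocks along a common first row and column, with the $(0,0)$-entry equal to $\lambda_0$ reduced cumulatively by $\sum_{i\in K,\,i<k}(\lambda_i+\lambda_{n-i+1})$ at the $k$-th stage. Processing the indices of $K$ in the order dictated by the hypothesis, the first inequality is exactly the statement that the leftover Perron mass after $k-1$ subsidies is still at least $-\lambda_{n-k+1}$, which is the nonnegativity condition needed to attach the next bad-pair block; the second inequality plays the same role for the remaining middle eigenvalues. Verification of the spectrum would then follow by induction on $|K|$, using a Schur complement or cofactor expansion to check that the characteristic polynomial picks up exactly the factors $(x-\lambda_i)(x-\lambda_{n-i+1})$ at each step.

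The principal obstacle is precisely this assembly step: one has to design the bordered construction so that the off-diagonal entries are nonnegative with exactly the slack provided by the two hypotheses and, at the same time, produce no spurious eigenvalues. I would expect the cleanest route to be an induction on $|K|$, attaching one bad-pair block at a time and invoking the $k$-th instance of the first inequality as the nonnegativity margin at that stage, with the second inequality applied once at the end to absorb the middle eigenvalues.
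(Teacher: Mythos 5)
This statement is Kellogg's 1971 theorem, which the paper only quotes as background (Theorem \ref{kellogg}) and never proves, so there is no in-paper argument to compare against; your proposal can only be judged against the classical literature, and as it stands it is a plan rather than a proof. The decisive step --- gluing the pair-blocks to the Perron entry along a shared first row and column so that the coupling entries are nonnegative exactly under the two Kellogg inequalities and no spurious eigenvalues (in particular none of the subsidies $\mu_i$) survive --- is precisely the content of the theorem, and you defer it entirely (``would then follow by induction \ldots using a Schur complement or cofactor expansion''). Without the explicit bordered matrices and the characteristic-polynomial bookkeeping, there is nothing to check; the sketch is in the right spirit (it is how Suleimanova/Perfect/Fiedler-type constructions go, and is close to the mechanism the paper itself uses later via Corollary \ref{cor1} and Lemma \ref{lf3}), but the heart of the argument is missing.

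Two concrete points would also need repair before the plan could be executed. First, your claim that a $3\times 3$ nonnegative matrix with spectrum $\lbrace \mu_i,\lambda_i,\lambda_{n-i+1}\rbrace$ exists ``provided $\mu_i\geq -(\lambda_i+\lambda_{n-i+1})$'' is too weak: for $i\in K$ one has $\lambda_i < -\lambda_{n-i+1}$, and any nonnegative realization forces its Perron root to dominate the spectral radius, i.e.\ $\mu_i \geq -\lambda_{n-i+1}$, which is strictly stronger (e.g.\ $\lbrace 1,5,-6\rbrace$ satisfies your condition but is not realizable). Your later reading of the first Kellogg inequality as leaving at least $-\lambda_{n-k+1}$ of Perron mass is the correct margin, but the earlier block-existence criterion contradicts it and is what the induction would actually have to run on. Second, the middle eigenvalues $\lambda_{M+1},\ldots,\lambda_{n-M}$ are all strictly negative (when that range is nonempty), so none of them can be ``placed as $1\times 1$ diagonal entries''; the eigenvalues that can sit untouched on the diagonal are the unpaired nonnegative ones $\lambda_{n-M+1},\ldots,\lambda_M$ in the case $M>n-M$. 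These are fixable slips, but together with the deferred assembly they mean the proposal does not yet constitute a proof.
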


\bigskip

\begin{thm} (Borobia \cite{borobia}, 1995) \label{borobia}

If $\lambda_0 \geq \lambda_1 \geq \ldots \geq \lambda_M \geq 0 > \lambda_{M+1} \geq \lambda_{M+2} \geq \ldots \geq \lambda_n$ are real numbers and there is a partition $\Lambda_1 \cup \ldots \cup \Lambda_S$ of $\lbrace \lambda_{M+1}, \ldots, \lambda_n \rbrace$ such that
$$\lambda_0 \geq \lambda_1 \geq \ldots \geq \lambda_M > \sum_{\lambda \in J_S} \lambda \geq \ldots \geq \sum_{\lambda \in J_1} \lambda$$
satisfies the Kellogg's condition, then $\lbrace \lambda_0, \lambda_1, \ldots, \lambda_n \rbrace$ is realizable.
\end{thm}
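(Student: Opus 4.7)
\emph{Approach.} The plan is a two-step reduction. By hypothesis the compressed list
\[
\Gamma := \{\lambda_0, \lambda_1, \ldots, \lambda_M, s_S, s_{S-1}, \ldots, s_1\}, \qquad s_k := \sum_{\lambda \in \Lambda_k} \lambda,
\]
satisfies Kellogg's condition, so by Theorem \ref{kellogg} there is a nonnegative matrix $A$ of order $M+S+1$ with spectrum $\Gamma$. I would then iteratively expand each (necessarily negative) eigenvalue $s_k$, for those $k$ with $|\Lambda_k| \geq 2$, into the full list $\Lambda_k$, enlarging the matrix by $|\Lambda_k|-1$ at each step. After all $S$ expansions the size is $M+S+1+\sum_k(|\Lambda_k|-1) = n+1$ and the spectrum is $\{\lambda_0, \ldots, \lambda_n\}$, as desired.

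The heart of the argument is an \emph{expansion lemma}: given an $m \times m$ nonnegative matrix $B$ whose spectrum contains a negative real $s$, and any list of negatives $\{\mu_1, \ldots, \mu_r\}$ with $\sum_i \mu_i = s$, there exists a nonnegative matrix of order $m+r-1$ whose spectrum is $(\mathrm{spec}(B) \setminus \{s\}) \cup \{\mu_1, \ldots, \mu_r\}$. To establish it, first invoke Suleimanova's theorem (Theorem \ref{suleimanova}) on the auxiliary list $\{|s|, \mu_1, \ldots, \mu_r\}$, whose entries sum to zero; this produces an $(r+1) \times (r+1)$ nonnegative matrix $C$ with Perron root $|s|$ and remaining eigenvalues $\mu_1, \ldots, \mu_r$. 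Next, combine $B$ and $C$ in a bordered block matrix together with a Brauer-type rank-one perturbation, arranged so as to cancel the auxiliary Perron eigenvalue $|s|$ of $C$ against the eigenvalue $s$ of $B$. A Schur-complement computation of the characteristic polynomial then confirms the claimed spectrum.

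The main obstacle is the expansion lemma and, specifically, the nonnegativity of the bordered matrix. Because $s < 0$ can never be the Perron root of a nonnegative block, Perfect's classical gluing lemma does not apply directly at $s$; the detour through the auxiliary positive $|s|$ is therefore unavoidable, but the rank-one shift used to absorb it can a priori create negative entries. Managing this requires a careful choice of the Perron eigenvectors of $B$ and of $C$, together with the row/column of $C$ along which the coupling is made; the verification ultimately draws on the slack provided by Kellogg's inequalities on $\Gamma$ — the same inequalities that guarantee the initial realization $A$ — to force every entry of the final matrix to remain nonnegative.
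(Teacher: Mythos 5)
First, a point of comparison: the paper itself gives no proof of this statement --- Theorem \ref{borobia} is quoted as background from Borobia's 1995 paper and is only used (and later adapted) in Theorems \ref{impboro}--\ref{boro4}. So your proposal cannot be matched against an internal argument of the paper; it has to stand on its own, and as written it does not.

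The gap is the ``expansion lemma'', which is where the entire difficulty of Borobia's theorem is concentrated. As stated it claims that realizability is preserved when an arbitrary negative eigenvalue $s$ of an arbitrary nonnegative matrix $B$ is split into negatives $\mu_1,\dots,\mu_r$ with $\sum_i \mu_i = s$; this is a strong general assertion, and nothing in the sketch comes close to proving it. The proposed mechanism does not work as described: Brauer/Rado-type rank-one updates only move eigenvalues whose eigenvectors you control (in the nonnegative setting, essentially the Perron root), while the Fiedler/\v{S}migoc-type gluings that do produce a matrix of order $m+r-1$ cancel a value that is the Perron root of one block and a \emph{diagonal entry} (not an eigenvalue) of the other --- never a negative eigenvalue such as $s$. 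The construction is also dimensionally inconsistent: bordering or direct-summing $B$ ($m\times m$) with the Suleimanova block $C$ ($(r+1)\times(r+1)$) and then applying a rank-one perturbation yields a matrix of order $m+r+1$, and no rank-one perturbation can delete the two eigenvalues $s$ and $\lvert s\rvert$ while fixing all the others, yet you assert order $m+r-1$. Finally, the appeal to ``slack in Kellogg's inequalities'' contradicts the generality in which the lemma is stated, is never made quantitative, and is not even available after the first expansion step, since the intermediate matrices are no longer Kellogg's explicit realization of the compressed list $\Gamma$. To salvage the strategy you would have to either prove the splitting lemma in the stated generality (a substantial theorem in its own right) or argue directly with the specific structure of Kellogg's realizing matrix, which the proposal never engages with; this is essentially what Borobia's original proof has to do.
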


\bigskip

In \cite{fdl}, M. Fiedler showed that the sufficient conditions of Theorem \ref{suleimanova} and Theorem \ref{kellogg} are also the sufficient conditions for the existence of symmetric nonnegative matrices with the prescribed eigenvalues. The problem of finding the necessary and sufficient conditions on the list of $n$ complex numbers to be a spectrum of a symmetric nonnegative matrix is called the \textit{Symmetric Nonnegative Inverse Eigenvalue Problem} (SNIEP). In \cite{nizar}, N. Radwan improved the sufficient condition of Theorem \ref{borobia} to the sufficient condition for the SNIEP. Other sufficient conditions of the SNIEP were also studied in \cite{soto3, soto4, loewy, mcdonald, laffey}. The relations among sufficient conditions for the RNIEP (and also for the SNIEP) were collected and discussed in \cite{carlos}. Later on, the sufficient conditions for the NIEP were studied in certain specific classes of matrices, for example, for normal matrices, the problem was studied by N. Radwan \cite{nizar}, and for symmetric circulant matrices and symmetric centrosymmetric matrices (equivalent to bisymmetric matrices), the problem were investigated in \cite{rojo}. Most recently, in \cite{julio}, A.I. Julio and R.L.Soto presented some sufficient conditions for the persymmetric and bisymmetric NIEP. Moreover, they showed that the sufficient condition in Theorem \ref{suleimanova} is also the sufficient condition for the \textit{bisymmetric nonnegative inverse eigenvalue problem} (BNIEP).

Recall that a \textit{bisymmetric matrix} is a square matrix that is symmetric with respect to both the diagonal line from the upper-left to the lower-right and is also symmetric to the diagonal line from the lower-left to the upper-right. Any bisymmetric matrix is a symmetric and persymmetric matrix. Moreover, A squre matrix is a bisymmetric matrix if and only if it is a symmetric centrosymmetric matrix. Thus, a square matrix
$A$ is a bisymmetric matrix if and only if $A^T=A$ and $JAJ=A$, where $J$ is the reverse identity matrix, i.e., $J=\begin{pmatrix}
\textbf{0}&&1\\
 &\reflectbox{$\ddots$}&\\
1&&\textbf{0}
\end{pmatrix}.$
Many results on symmetric centrosymmetric (bisymmetric) matrices were discussed by P. Butler and A. Cantoni in \cite{can}. Here, we collect some of these results.

\bigskip

\begin{thm} (Cantoni and Butler \cite{can}, 1976) \label{canthm1}

Let $Q$ be an $n \times n$ bisymmetric matrix.

(1) If $n$ is an even number then $Q$ is of the form
$\begin{pmatrix}
A & JCJ \\
C & JAJ
\end{pmatrix}$, where $A$ and $C$ are $\frac{n}{2} \times \frac{n}{2}$ matrices $A=A^T$ and $C^T=JCJ.$

(2) If $n=2m+1$ is an odd number then $Q$ is of the form
$\begin{pmatrix}
A   & x  & JCJ \\
x^T & p  &x^TJ \\
C   & Jx &JAJ
\end{pmatrix}$, where $x$ is an $m \times 1$ matrix, $A$ and $C$ are $m \times m$ matrices, $A=A^T$ and $C^T=JCJ$.
\end{thm}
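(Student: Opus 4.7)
The plan is to reduce the proof to a routine block-matrix calculation by writing the reverse identity $J$ of size $n$ in block form compatible with the natural partition of $Q$, then imposing the two defining identities $Q^{T}=Q$ and $JQJ=Q$ and reading off the relations between the blocks.

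First I would treat the even case $n=2k$. Write
$$Q=\begin{pmatrix} A & B \\ C & D \end{pmatrix},\qquad J=\begin{pmatrix} 0 & J_{k} \\ J_{k} & 0 \end{pmatrix},$$
where each block is $k\times k$ and $J_{k}$ denotes the $k\times k$ reverse identity. A direct block multiplication gives
$$JQJ=\begin{pmatrix} J_{k}DJ_{k} & J_{k}CJ_{k} \\ J_{k}BJ_{k} & J_{k}AJ_{k} \end{pmatrix}.$$
Setting this equal to $Q$ yields $D=J_{k}AJ_{k}$ and $B=J_{k}CJ_{k}$. Imposing $Q^{T}=Q$ then forces $A^{T}=A$ and $C=B^{T}$; substituting $B=J_{k}CJ_{k}$ into $C=B^{T}$ produces $C^{T}=J_{k}CJ_{k}$, and the remaining relation $D^{T}=D$ is automatic from $A^{T}=A$. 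This gives exactly the stated block form.

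For the odd case $n=2m+1$ I would partition
$$Q=\begin{pmatrix} A & x & B \\ y^{T} & p & z^{T} \\ C & w & D \end{pmatrix},\qquad J=\begin{pmatrix} 0 & 0 & J_{m} \\ 0 & 1 & 0 \\ J_{m} & 0 & 0 \end{pmatrix},$$
with $A,B,C,D$ of size $m\times m$, $x,y,w,z\in\mathbb{R}^{m}$, and $p$ a scalar. A block multiplication analogous to the one above yields $JQJ$ in terms of these pieces. The relation $JQJ=Q$ then forces $D=J_{m}AJ_{m}$, $B=J_{m}CJ_{m}$, $w=J_{m}x$, and $z=J_{m}y$, while $Q^{T}=Q$ adds $A=A^{T}$, $y=x$, $w=z$, and $C^{T}=B$. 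Combining $C^{T}=B=J_{m}CJ_{m}$ produces the persymmetry relation on $C$, and the $(3,2)$-block becomes $w=J_{m}x$ while the $(2,3)$-block becomes $z^{T}=x^{T}J_{m}$, which matches the displayed form of $Q$.

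The only thing to be careful about is the block structure of $J$: since the $n\times n$ reverse identity in the even case sends the top half to the bottom half and vice versa, and in the odd case fixes the middle coordinate while swapping the outer halves, one must choose a partition of $Q$ that is compatible with this action, which is precisely what is done above. Once that is fixed, the rest is straightforward block arithmetic and there is no real obstacle; the proof is essentially a verification that both defining symmetries translate cleanly into the claimed relations on the blocks.
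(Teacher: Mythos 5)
Your block computation is correct: writing $J$ in the block form compatible with the partition of $Q$, computing $JQJ$, and then imposing $Q^{T}=Q$ does yield exactly the relations $D=J_kAJ_k$, $B=J_kCJ_k$, $A^{T}=A$, $C^{T}=J_kCJ_k$ in the even case and the corresponding relations (including $y=x$, $w=Jx$, $z^{T}=x^{T}J$) in the odd case. The paper itself offers no proof of this statement --- it is quoted from Cantoni and Butler --- and your argument is the standard direct verification of that cited result, so there is nothing to fault and nothing in the paper to compare it against beyond the statement itself.
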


\bigskip

\begin{thm} (Cantoni and Butler \cite{can}, 1976) \label{canthm2}

Let $Q$ be an $n \times n$ bisymmetric matrix.

(1) If $n$ is an even number and $Q$ is of the form $Q=\begin{pmatrix}
A & JCJ \\
C & JAJ
\end{pmatrix}$
then $Q$ orthogonally similar to the matrix
$\begin{pmatrix}
A-JC &  \\
 & A+JC
\end{pmatrix}$.

(2) If $n=2m+1$ is an odd number and $Q$ is of the form
$Q=\begin{pmatrix}
A   & x  & JCJ \\
x^T & p  &x^TJ \\
C   & Jx &JAJ
\end{pmatrix}$
then $Q$ orthogonally similar to the matrix
$\begin{pmatrix}
A-JC &    &  \\
     &p   & \sqrt{2}x^T\\
     &\sqrt{2}x &A+JC
\end{pmatrix}$.
\end{thm}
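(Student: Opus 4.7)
The strategy is to realize the orthogonal similarity via an explicit change of basis adapted to the eigenspaces of $J$. Because $Q$ is bisymmetric we have $JQJ = Q$, and since $J^2 = I$ this gives $JQ = QJ$; thus $Q$ preserves each of the two eigenspaces of $J$ (the $+1$ eigenspace consists of ``symmetric'' vectors $v$ with $v_i = v_{n+1-i}$, the $-1$ eigenspace of ``antisymmetric'' ones). In an orthonormal basis built from these eigenspaces $Q$ automatically becomes block diagonal, and it then only remains to identify the two blocks with $A-JC$ and $A+JC$ (plus the middle piece in the odd case).

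For part (1), where $n = 2m$, both eigenspaces have dimension $m$, and I would take
\[
P = \frac{1}{\sqrt{2}}\begin{pmatrix} I & I \\ -J & J \end{pmatrix},
\]
whose left block column spans the antisymmetric eigenspace and right block column the symmetric one. Orthogonality $P^TP = I$ is immediate: the off-diagonal blocks of $P^TP$ are $\tfrac{1}{2}(I - J^2) = 0$. The verification that $P^TQP$ has the desired form is a direct block computation using $J\cdot JAJ = AJ$ and $J\cdot JCJ = CJ$; the cross blocks collapse to zero and the diagonal blocks reduce to $A-JC$ and $A+JC$ respectively.

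For part (2), where $n = 2m+1$, the symmetric eigenspace now has dimension $m+1$ (it contains the middle basis vector $e_{m+1}$) and the antisymmetric one has dimension $m$. The analogous orthogonal matrix is
\[
P = \begin{pmatrix} \tfrac{1}{\sqrt{2}}I & 0 & \tfrac{1}{\sqrt{2}}I \\ 0 & 1 & 0 \\ -\tfrac{1}{\sqrt{2}}J & 0 & \tfrac{1}{\sqrt{2}}J \end{pmatrix}.
\]
The same block multiplication goes through: the antisymmetric block decouples as $A-JC$ in the top-left, while on the symmetric side the off-diagonal contributions $\tfrac{1}{\sqrt{2}}x+\tfrac{1}{\sqrt{2}}x$ and $\tfrac{1}{\sqrt{2}}x^T+\tfrac{1}{\sqrt{2}}x^TJ\cdot J$ combine (rather than cancel) into $\sqrt{2}x$ and $\sqrt{2}x^T$, the entry $p$ is preserved, and every coupling between the antisymmetric part and the middle row/column vanishes.

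The calculations are mechanical; the only real pitfall is sign bookkeeping, namely ensuring that the chosen sign of the $J$ in the lower-left of $P$ places $A-JC$ in the top-left block and $A+JC$ in the bottom-right, to match the statement. Once $P$ is committed to, the verification in both parts amounts to routine block-matrix multiplication using the identities $J^2 = I$, $A^T = A$, and $C^T = JCJ$, so there is no genuinely hard step beyond making the right choice of $P$ up front.
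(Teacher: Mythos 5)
Your proposal is correct: with your choice of $P$ one checks directly that $P^TP=I$ and that $P^TQP$ has exactly the stated block forms (in the even case the diagonal blocks come out as $\tfrac12\bigl(2(A-JC)\bigr)$ and $\tfrac12\bigl(2(A+JC)\bigr)$ and the cross blocks vanish; in the odd case the middle couplings give $\sqrt{2}\,x$ and $\sqrt{2}\,x^T$ as you say). Note that the paper itself offers no proof of this theorem—it is quoted from Cantoni and Butler—and your explicit commuting-with-$J$ / eigenspace-splitting argument is precisely the standard proof from that reference, so there is nothing to reconcile with the paper's text.
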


\bigskip

Some sufficient conditions for the BNIEP are recently studied by Julio and Soto \cite{julio}. The origin of these sufficient conditions for the BNIEP in \cite{julio} comes from the rank-$r$ perturbation results due to Rado and introduced by Perfect. In \cite{soto2}, Soto and Rojo reapplied the results of Perfect into the sufficient conditions of NIEP. Moreover, in \cite{soto4} these sufficient conditions were used to be sufficient conditions for the SNIEP. Some related theorems about sufficient conditions of BNIEP are collected below:
\bigskip

\begin{thm}
(Rado \cite{rado}, 1955)

Let $A$ be an $n \times n$ arbitrary matrix with eigenvalues $\lambda_1, \lambda_2, \ldots, \lambda_n$ and, for some $r \leq n$, let $\Omega =\emph{diag}(\lambda_1, \lambda_2, \ldots, \lambda_r )$. Let $X$ be an $n \times r$ matrix with rank $r$ such that its columns $x_1, x_2, \ldots, x_r $ satisfy $Ax_i = \lambda_i x_i, i=1, 2, \ldots, r$. Let $C$ be an $r \times n$ arbitrary matrix. Then $A+XC$ has eigenvalues $\mu_1, \ldots, \mu_r, \lambda_{r+1}, \ldots, \lambda_{n}$, where $\mu_1, \ldots, \mu_r$ are eigenvalues of the matrix $\Omega+CX$.
\end{thm}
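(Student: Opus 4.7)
The plan is to exploit the eigenspace spanned by the columns of $X$ in order to block-triangularize $A$ and then to observe that the rank-$r$ perturbation $XC$ only affects the top block row in the resulting basis. First, the hypothesis $Ax_i = \lambda_i x_i$ is encoded compactly as $AX = X\Omega$. Since $X$ has full column rank $r$, I would choose an $n \times (n-r)$ matrix $Y$ so that $P = [X \mid Y]$ is invertible; writing $P^{-1}$ in block-row form $\bigl(\begin{smallmatrix} U \\ V \end{smallmatrix}\bigr)$, the identity $P^{-1}P = I_n$ yields $UX = I_r$, $UY = 0$, $VX = 0$, and $VY = I_{n-r}$.

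Next I would compute $P^{-1}AP$. The first block column is $P^{-1}AX = P^{-1}X\Omega = \bigl(\begin{smallmatrix} I_r \\ 0 \end{smallmatrix}\bigr)\Omega$; for the second block column, since the columns of $P$ span $\mathbb{C}^n$, there exist matrices $B$ of size $r \times (n-r)$ and $D$ of size $(n-r) \times (n-r)$ such that $AY = XB + YD$. Hence
$$P^{-1}AP = \begin{pmatrix} \Omega & B \\ 0 & D \end{pmatrix},$$
which is block upper triangular, so its spectrum is the union of $\{\lambda_1,\ldots,\lambda_r\}$ (the diagonal of $\Omega$) and the spectrum of $D$. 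Since similarity preserves eigenvalues with multiplicities and $A$ has total spectrum $\{\lambda_1,\ldots,\lambda_n\}$, the remaining eigenvalues of $D$ must be exactly $\lambda_{r+1},\ldots,\lambda_n$.

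Finally, applying the same similarity to $XC$ and using $P^{-1}X = \bigl(\begin{smallmatrix} I_r \\ 0 \end{smallmatrix}\bigr)$ together with $CP = [CX \mid CY]$ gives
$$P^{-1}(XC)P = \begin{pmatrix} CX & CY \\ 0 & 0 \end{pmatrix},$$
so that
$$P^{-1}(A+XC)P = \begin{pmatrix} \Omega + CX & B + CY \\ 0 & D \end{pmatrix}.$$
This is still block upper triangular, so its spectrum is the union of the spectrum of $\Omega+CX$, which by definition is $\{\mu_1,\ldots,\mu_r\}$, with the spectrum of $D$, giving exactly $\{\mu_1,\ldots,\mu_r,\lambda_{r+1},\ldots,\lambda_n\}$. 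No serious obstacle arises in this plan; the only care needed is the choice of the complementary block $Y$, which is available precisely because $X$ has full column rank, and the identification of the spectrum of the lower-right block $D$ with $\{\lambda_{r+1},\ldots,\lambda_n\}$, which is forced by similarity and the block-triangular structure.
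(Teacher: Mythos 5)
Your argument is correct: the identities $P^{-1}X=\bigl(\begin{smallmatrix} I_r\\ 0\end{smallmatrix}\bigr)$ and $AX=X\Omega$ do give the block upper triangular forms you write, and comparing characteristic polynomials correctly identifies the spectrum of $D$ with $\{\lambda_{r+1},\ldots,\lambda_n\}$. The paper itself states this theorem only as a cited result of Rado (via Perfect) and gives no proof, and your similarity/block-triangularization argument is precisely the classical proof found in that literature, so there is nothing to reconcile.
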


\bigskip

\begin{thm} (Soto \cite{soto4}, 2007) \label{sotos}

Let $A$ be an $n \times n$ symmetric matrix with eigenvalues $\lambda_1, \lambda_2, \ldots, \lambda_n$ and, for some $r \leq n$, let $\lbrace x_1, x_2, \ldots, x_r \rbrace$ be an orthonormal set of eigenvectors of $A$ spanning the invariant subspace associated with $\lambda_1, \lambda_2, \ldots, \lambda_r$. Let $X$ be the $n \times r$ matrix with the $i^{th}$ column $x_i$, let $\Omega = \emph{diag} (\lambda_1, \lambda_2, \ldots, \lambda_r)$, and let $C$ be any $r \times r$ symmetric matrix. Then the symmetric matrix $A+XCX^T$ has eigenvalues $\mu_1, \ldots, \mu_r, \lambda_{r+1}, \ldots, \lambda_{n}$, where $\mu_1, \ldots, \mu_r$ are eigenvalues of the matrix $\Omega+C$.
\end{thm}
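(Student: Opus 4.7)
The plan is to reduce $A+XCX^{T}$ to block-diagonal form by conjugating with an orthogonal matrix, so that the effect of the perturbation $XCX^{T}$ is cleanly isolated. First I would note that $A+XCX^{T}$ is automatically symmetric, since $(XCX^{T})^{T}=XC^{T}X^{T}=XCX^{T}$ by symmetry of $C$. Since the $x_{i}$ are orthonormal, $X^{T}X=I_{r}$.

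Next I would invoke the spectral theorem to extend $\{x_{1},\ldots,x_{r}\}$ to an orthonormal eigenbasis $\{x_{1},\ldots,x_{n}\}$ of $A$, with $Ax_{i}=\lambda_{i}x_{i}$ for $i=r{+}1,\ldots,n$. Let $Y$ be the $n\times(n-r)$ matrix whose columns are $x_{r+1},\ldots,x_{n}$; then $P=[X\mid Y]$ is orthogonal, with $Y^{T}X=0$ and $Y^{T}Y=I_{n-r}$. With $D=\operatorname{diag}(\lambda_{r+1},\ldots,\lambda_{n})$, the orthogonality of the eigenbasis gives
\begin{equation*}
P^{T}AP=\begin{pmatrix}\Omega&0\\0&D\end{pmatrix}.
\end{equation*}

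The key calculation is then the block form of $P^{T}(XCX^{T})P$. Using $X^{T}X=I_{r}$ and $Y^{T}X=0$, each block collapses: the $(1,1)$-block equals $(X^{T}X)C(X^{T}X)=C$, while the $(1,2)$, $(2,1)$, and $(2,2)$ blocks all vanish. Hence
\begin{equation*}
P^{T}(A+XCX^{T})P=\begin{pmatrix}\Omega+C&0\\0&D\end{pmatrix}.
\end{equation*}
Since conjugation by an orthogonal matrix preserves the spectrum, the eigenvalues of $A+XCX^{T}$ are exactly those of $\Omega+C$ together with $\lambda_{r+1},\ldots,\lambda_{n}$, which is the claim.

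There is no real obstacle here; the only step requiring any care is the extension of $\{x_{1},\ldots,x_{r}\}$ to a full orthonormal eigenbasis, which is a direct consequence of the spectral theorem applied to $A$ restricted to the orthogonal complement of $\operatorname{span}\{x_{1},\ldots,x_{r}\}$ (an invariant subspace because $A$ is symmetric). The rest is an orthogonality bookkeeping computation, and the result can be viewed as the symmetric specialization of Rado's theorem in which the choice $C\mapsto CX^{T}$ makes the perturbation $XCX^{T}$ symmetric and the auxiliary matrix $\Omega+CX$ becomes $\Omega+C$.
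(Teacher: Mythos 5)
Your proof is correct: the orthogonal-conjugation computation $P^{T}(A+XCX^{T})P=\begin{pmatrix}\Omega+C&0\\0&D\end{pmatrix}$ with $P=[X\mid Y]$ is exactly the standard argument for this result, and the one step needing care (extending $x_{1},\ldots,x_{r}$ to a full orthonormal eigenbasis via invariance of the orthogonal complement) is handled properly. Note that the paper itself does not prove this theorem but quotes it from Soto's 2007 paper, where the proof is essentially the same symmetric specialization of Rado's rank-$r$ perturbation result that you describe.
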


\bigskip

\begin{thm} (Soto \cite{soto4}, 2007)\label{soto4}

Let $\lambda_1 \geq \lambda_1 \geq \ldots \geq \lambda_n$ be real numbers and, for some $t \leq n$, let $\omega_1, \ldots, \omega_t$ be real numbers satisfying $ 0 \leq \omega_k \leq \lambda_1, i=1, \ldots, t.$ If there exist

\qquad (1) a partition $\Lambda_1 \cup \ldots \cup \Lambda_t$ of $\lbrace \lambda_1, \ldots, \lambda_n \rbrace$, in which for each $j$, $ \Lambda_j=\lbrace \lambda_{j1}, \lambda_{j2}, \ldots,$ $\lambda_{jp_j} \rbrace$, $\lambda_{jk} \geq \lambda_{j(k+1)}$, $\lambda_{j1} \geq 0$ , and $\lambda_{11}=\lambda_1$, such that for each $j=1, \ldots,t$, the set $\Gamma_j=\lbrace \omega_j, \lambda_{j2},$ $\ldots, \lambda_{jp_{j}} \rbrace$ is realizable by a nonnegative symmetric matrix, and

\qquad (2) a $t \times t$ nonnegative symmetric matrix with all eigenvalues as $\lambda_{11}, \lambda_{21}, \ldots$, $\lambda_{t1}$ and diagonal entries as $\omega_1, \omega_2, \ldots, \omega_t,$ then\\
$\lbrace \lambda_1, \lambda_2, \ldots, \lambda_n \rbrace$ is realizable by nonnegative symmetric matrix.
\end{thm}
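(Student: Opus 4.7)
The plan is to apply the symmetric Rado-type perturbation (Theorem~\ref{sotos}) with the matrix $B$ from hypothesis~(2) supplying the off-diagonal coupling between block-diagonally assembled realizers of the sets $\Gamma_j$. Concretely, using hypothesis~(1), for each $j=1,\ldots,t$ we choose a $p_j\times p_j$ symmetric nonnegative matrix $A_j$ realizing $\Gamma_j=\{\omega_j,\lambda_{j2},\ldots,\lambda_{jp_j}\}$ with $\omega_j$ as its Perron eigenvalue, so that Perron--Frobenius furnishes a unit nonnegative eigenvector $u_j$ satisfying $A_ju_j=\omega_ju_j$. We then form the block-diagonal matrix $A:=A_1\oplus\cdots\oplus A_t$, which is symmetric nonnegative with spectrum $\bigcup_j\Gamma_j$, let $x_j\in\mathbb{R}^n$ be $u_j$ padded with zeros outside the $j$-th block, and let $X$ be the $n\times t$ matrix with columns $x_1,\ldots,x_t$. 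Because the $x_j$ have pairwise disjoint supports they are automatically orthonormal, and each $x_j$ is an eigenvector of $A$ for eigenvalue $\omega_j$.

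Next, let $B$ be the nonnegative symmetric $t\times t$ matrix furnished by hypothesis~(2), and set $\Omega:=\mathrm{diag}(\omega_1,\ldots,\omega_t)$ and $C:=B-\Omega$. Since $\Omega+C=B$ has eigenvalues $\lambda_{11},\ldots,\lambda_{t1}$, applying Theorem~\ref{sotos} with $r=t$ yields that $A+XCX^T$ is symmetric with spectrum
\[
\{\lambda_{11},\ldots,\lambda_{t1}\}\cup\bigcup_{j=1}^{t}\{\lambda_{j2},\ldots,\lambda_{jp_j}\}=\{\lambda_1,\ldots,\lambda_n\},
\]
which is exactly the required spectrum.

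The remaining and main point is to verify that $A+XCX^T\ge 0$ entrywise, and this is precisely the reason hypothesis~(2) demands that $B$ be \emph{nonnegative} rather than just symmetric with the prescribed diagonal and spectrum. Because the diagonal of $B$ coincides with $(\omega_1,\ldots,\omega_t)$, the matrix $C=B-\Omega$ has zero diagonal and nonnegative off-diagonal entries. Expanding $XCX^T=\sum_{j\ne k}c_{jk}\,x_jx_k^T$, each summand is a nonnegative rank-one matrix whose support lies in an off-diagonal block of the partition; hence $XCX^T$ is entrywise nonnegative and its support is disjoint from the block-diagonal support of $A$, so $A+XCX^T$ is a symmetric nonnegative realizer of $\{\lambda_1,\ldots,\lambda_n\}$. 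The obstacle this construction sidesteps is that a generic symmetric $C$ satisfying the Rado spectral constraint $\mathrm{spec}(\Omega+C)=\{\lambda_{11},\ldots,\lambda_{t1}\}$ need not be entrywise nonnegative; hypothesis~(2) is exactly the ingredient that bundles the Rado spectral data together with the entrywise nonnegativity required to keep the perturbed matrix inside the symmetric nonnegative cone.
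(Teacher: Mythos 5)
Your construction is the standard one, and it is precisely the method this paper itself employs: Theorem \ref{soto4} is quoted from Soto \cite{soto4} without proof, but the paper's proof of its bisymmetric analogue, Theorem \ref{imsoto3}, does exactly what you do --- assemble realizers of the $\Gamma_j$ block-diagonally, pad their unit nonnegative Perron eigenvectors with zeros to form $X$, and apply Theorem \ref{sotos} with $\Omega=\mathrm{diag}(\omega_1,\ldots,\omega_t)$ and $C=B-\Omega$. Your nonnegativity argument ($C$ has zero diagonal and nonnegative off-diagonal entries, and each $x_jx_k^T$ is supported in an off-diagonal block) is the same verification needed there, so in approach and in substance your proof matches the intended one.

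The one step you should not gloss over is the clause ``we choose $A_j$ realizing $\Gamma_j$ with $\omega_j$ as its Perron eigenvalue.'' Condition (1) as stated only asserts that $\Gamma_j=\{\omega_j,\lambda_{j2},\ldots,\lambda_{jp_j}\}$ is realizable by \emph{some} symmetric nonnegative matrix; $\omega_j$ is the Perron root of such a realizer exactly when $\omega_j=\max\Gamma_j$, i.e. $\omega_j\geq\lambda_{j2}$, and nothing in the listed hypotheses forces this (for instance $\Gamma_j=\{0,1,-1\}$ is symmetrically realizable with $\omega_j=0$, whose eigenvector cannot be taken nonnegative). If $\omega_j$ is not the Perron root, Perron--Frobenius gives no nonnegative eigenvector for $\omega_j$, the columns of $X$ may have mixed signs, and the entrywise nonnegativity of $A+XCX^T$ is no longer guaranteed. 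This is precisely why the paper's own Theorem \ref{imsoto3} (and Julio--Soto's Theorems \ref{julio1}--\ref{julio2}) state condition (1) with the explicit requirement that $\Gamma_j$ be realizable \emph{with Perron root} $\omega_j$. So your argument is correct and complete once that reading of condition (1) is made explicit; as a critique of the statement as transcribed here, it is the only point where your proof uses more than what is literally assumed.
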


\bigskip

The next two results are the sufficient conditions for the BNEIP given by Julio and Soto in \cite{julio}.

\bigskip

\begin{thm} (A.I Julio and Soto \cite{julio}, 2015) \label{julio1}

Let $\Lambda = \lbrace \lambda_1, \lambda_2, \ldots, \lambda_n \rbrace$ be a list of real numbers with $\lambda_1 \geq \vert \lambda_i \vert, i= 2,3, \ldots, n,$ and $\displaystyle \sum_{i=1}^n \lambda_i \geq 0$. Suppose there exists a partition of $\Lambda$,

\qquad \qquad \qquad $\Lambda =\Lambda_0 \cup \Lambda_1 \cup \ldots \cup \Lambda_{\frac{p_0}{2}} \cup \Lambda_{\frac{p_0}{2}} \cup \ldots \cup \Lambda_1$, \.\ for even $\rho_0$,  with

\qquad \qquad \qquad $\Lambda_0=\lbrace \lambda_{01}, \lambda_{02}, \ldots, \lambda_{0p_0} \rbrace$, \qquad $\lambda_{01} = \lambda_1$,

\qquad \qquad \qquad $\Lambda_k=\lbrace \lambda_{k1}, \lambda_{k2}, \ldots, \lambda_{kp_k} \rbrace$, \qquad $k=1, 2, \ldots, \frac{p_0}{2}$, \\
where some of the lists $\Lambda_k$ can be empty, such that the following conditions are satisfied:

\qquad (1) For each $k=1, 2, \ldots, \frac{p_0}{2}$, there exists a symmetric matrix with all eigenvalues as $\omega_k, \lambda_{k1}, \lambda_{k2}, \ldots, \lambda_{kp_k}$, $0\leq \omega_k \leq \lambda_1$.

\qquad (2) There exists a bisymmetric nonnegative matrix of order $p_0$, with all eigenvalues $\lambda_{01}, \lambda_{02}, \ldots, \lambda_{0p_0}$ and diagonal entries $\omega_1, \omega_2, \ldots, \omega_{\frac{p_0}{2}},\omega_{\frac{p_0}{2}}, \ldots, \omega_2, \omega_1$. \\
Then $\lbrace \lambda_1, \lambda_2, \ldots, \lambda_n \rbrace$ is realizable by an $n \times n$ nonnegative bisymmetric matrix.
\end{thm}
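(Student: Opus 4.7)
The plan is to adapt the Rado--Soto rank-$p_0$ perturbation of Theorem \ref{sotos} so that every intermediate object is paired up under the reverse identity $J$, and therefore the final perturbed matrix is not merely symmetric but bisymmetric.

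For each $k=1,\ldots,\frac{p_0}{2}$, condition (1) furnishes a symmetric nonnegative matrix $B_k$ with spectrum $\{\omega_k,\lambda_{k1},\ldots,\lambda_{kp_k}\}$; choose it so that $\omega_k$ is the Perron eigenvalue, and let $v_k\ge 0$ be a unit Perron eigenvector. Form
\[
A=\mathrm{diag}\bigl(B_1,\ldots,B_{\frac{p_0}{2}},\,JB_{\frac{p_0}{2}}J,\ldots,JB_1J\bigr),
\]
where each inner $J$ has the size of its block. Since the block sizes are mirrored about the centre, a direct block-reversal calculation gives $J_nAJ_n=A$, so $A$ is bisymmetric; it is also symmetric and entrywise nonnegative. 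Its spectrum consists of each $\lambda_{ki}$ with multiplicity two together with $\omega_1,\ldots,\omega_{\frac{p_0}{2}},\omega_{\frac{p_0}{2}},\ldots,\omega_1$.

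Next, let $B$ be the bisymmetric nonnegative matrix of order $p_0$ from condition (2), let $\Omega=\mathrm{diag}(\omega_1,\ldots,\omega_{\frac{p_0}{2}},\omega_{\frac{p_0}{2}},\ldots,\omega_1)$ be its diagonal, and set $C=B-\Omega$; then $C$ is symmetric, entrywise nonnegative, has zero diagonal, and satisfies $J_{p_0}CJ_{p_0}=C$. Define the $n\times p_0$ matrix $X$ by placing, in column $j$ ($1\le j\le\frac{p_0}{2}$), the vector $v_j$ in the $B_j$-block rows, and in column $p_0+1-j$ the vector $Jv_j$ in the $JB_jJ$-block rows, with all other entries zero. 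The columns of $X$ occupy disjoint block rows, so $X^TX=I_{p_0}$, each column is an eigenvector of $A$ for some $\omega_k$, and the paired arrangement yields the key identity $J_nX=XJ_{p_0}$. Applying Theorem \ref{sotos} to $A$, $X$, $\Omega$, $C$ shows that $\widehat{A}:=A+XCX^T$ has spectrum $\Lambda_0$ together with each $\lambda_{ki}$ with multiplicity two, i.e.\ $\Lambda$ exactly. Bisymmetry follows from
\[
J_n\widehat{A}J_n=A+(J_nX)C(X^TJ_n)=A+X(J_{p_0}CJ_{p_0})X^T=A+XCX^T=\widehat{A},
\]
and entrywise nonnegativity from $A\ge 0$, $X\ge 0$, and $C\ge 0$.

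The main obstacle is the Perron choice at the first step: the construction works only if each $B_k$ can be taken with $\omega_k$ as its spectral radius, so that the Perron eigenvector $v_k$ is nonnegative and hence $X\ge 0$. This is the tacit strengthening of condition (1) built into Soto-style hypotheses; it is compatible with $0\le\omega_k\le\lambda_1$ and can be justified by the standard fact that a realizable list whose dominant element is $\omega_k$ admits a symmetric nonnegative realization having $\omega_k$ as its Perron value. All remaining items---orthonormality of $X$, bisymmetry of $A$, the identity $J_nX=XJ_{p_0}$, and the spectrum count---are direct verifications.
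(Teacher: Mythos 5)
Your construction is the right one, and it is essentially the same Rado--Soto rank-$p_0$ perturbation that this paper itself uses to prove its closely related Theorem~\ref{imsoto3} (the paper does not reprove Theorem~\ref{julio1}, which it cites from \cite{julio}): mirrored blocks $\mathrm{diag}(B_1,\ldots,B_{p_0/2},JB_{p_0/2}J,\ldots,JB_1J)$, columns of $X$ paired as $v_j$ and $Jv_j$ so that $J_nX=XJ_{p_0}$, and $C=B-\Omega$ with $J_{p_0}CJ_{p_0}=C$; the spectrum count, the bisymmetry computation, and $C\ge 0$ are all correct (reading condition (1), as clearly intended, as a symmetric \emph{nonnegative} matrix).

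The genuine gap is the step you yourself flag and then dismiss: you need each $B_k$ to have $\omega_k$ as its Perron root so that $v_k\ge 0$ and hence $X\ge 0$; without that, the off-diagonal blocks of $XCX^T$ (which sit where $A$ is zero) can be negative and nonnegativity of $A+XCX^T$ fails. Condition (1) as stated only gives $0\le\omega_k\le\lambda_1$ and realizability of $\{\omega_k,\lambda_{k1},\ldots,\lambda_{kp_k}\}$; it does \emph{not} give $\omega_k\ge\lambda_{k1}$, and if $\lambda_{k1}>\omega_k$ the Perron root of any nonnegative realization is $\lambda_{k1}$, so the $\omega_k$-eigenvector cannot in general be taken nonnegative. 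Your justification ("a realizable list whose dominant element is $\omega_k$ admits a realization with Perron value $\omega_k$") is circular, since whether $\omega_k$ is dominant is exactly what is in question. This is not a removable technicality of the method: the paper, in its own variant Theorem~\ref{imsoto3}, explicitly strengthens condition (1) to "realizable by a nonnegative bisymmetric matrix \emph{with the Perron root} $\omega_j$", which is precisely the hypothesis your argument needs. With that reading of condition (1) (as in the Julio--Soto formulation) your proof goes through; as written against the statement given here, the nonnegativity of $X$ is unjustified.
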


\bigskip

\begin{thm} (A.I Julio and Soto \cite{julio}, 2015) \label{julio2} 

Let $\Lambda = \lbrace \lambda_1, \lambda_2, \ldots, \lambda_n \rbrace$ be a list of real numbers with $\lambda_1 \geq \vert \lambda_i \vert, i= 2,3, \ldots, n,$ and $\displaystyle \sum_{i=1}^n \lambda_i \geq 0$. Suppose there exists a partition of $\Lambda$,

\qquad \qquad $\Lambda =\Lambda_0 \cup \Lambda_1 \cup \ldots \cup \Lambda_{\frac{p_0-1}{2}} \cup \Lambda_{\frac{p_0+1}{2}} \cup \Lambda_{\frac{p_0-1}{2}} \cup \ldots \cup \Lambda_1$, \.\ for odd $\rho_0$,  with

\qquad \qquad $\Lambda_0=\lbrace \lambda_{01}, \lambda_{02}, \ldots, \lambda_{0p_0} \rbrace$, \qquad $\lambda_{01} = \lambda_1$,

\qquad \qquad $\Lambda_k=\lbrace \lambda_{k1}, \lambda_{k2}, \ldots, \lambda_{kp_k} \rbrace$, \qquad $k=1, 2, \ldots, \frac{p_0+1}{2}$, \\
where some of the lists $\Lambda_k$ can be empty, such that the following conditions are satisfied:

(1) For each $k=1, 2, \ldots, \frac{p_0-1}{2}$, there exists a symmetric matrix with all eigenvalues as $\omega_k, \lambda_{k1}, \lambda_{k2}, \ldots, \lambda_{kp_k}$, $0\leq \omega_k \leq \lambda_1$, and there exists a bisymmetric nonnegative matrix with all eigenvalues as $\omega_{d}, \lambda_{d1}, \lambda_{d2}, \ldots, \lambda_{dp_d}$, where $d = \frac{p_0+1}{2}.$

(2) There exists a $p_0 \times p_0$ bisymmetric nonnegative matrix of order $p_0$, with all eigenvalues $\lambda_{01}, \lambda_{02}, \ldots, \lambda_{0p_0}$ and diagonal entries $\omega_1, \omega_2,$ $\ldots, \omega_{\frac{p_0-1}{2}}, \omega_{\frac{p_0+1}{2}}, \omega_{\frac{p_0-1}{2}},$ $\ldots, \omega_2, \omega_1$. \\
Then $\lbrace \lambda_1, \lambda_2, \ldots, \lambda_n \rbrace$ is realizable by an $n \times n$ nonnegative bisymmetric matrix.
\end{thm}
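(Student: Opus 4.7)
The plan is to mirror the proof of Theorem \ref{julio1} for the odd case, combining the odd--order block decomposition from Theorem \ref{canthm1}(2) with the symmetric Rado--type result of Theorem \ref{sotos}. Write $d = \tfrac{p_0+1}{2}$. By hypothesis (2), fix a $p_0 \times p_0$ bisymmetric nonnegative matrix $M$ with spectrum $\Lambda_0$ and diagonal $(\omega_1, \ldots, \omega_{d-1}, \omega_d, \omega_{d-1}, \ldots, \omega_1)$. By hypothesis (1), for each $k = 1, \ldots, d-1$ fix a symmetric nonnegative matrix $A_k$ of order $p_k + 1$ with spectrum $\{\omega_k, \lambda_{k1}, \ldots, \lambda_{kp_k}\}$, and fix a bisymmetric nonnegative matrix $A_d$ of order $p_d + 1$ with spectrum $\{\omega_d, \lambda_{d1}, \ldots, \lambda_{dp_d}\}$. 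Choose each $\omega_k$ to be the Perron eigenvalue of $A_k$ (resp.\ $A_d$), and let $x_k \geq 0$ be an associated unit Perron eigenvector.

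The first step is to form the block-diagonal matrix
$$A = \mathrm{diag}\bigl(A_1, \ldots, A_{d-1},\, A_d,\, J A_{d-1} J, \ldots, J A_1 J\bigr),$$
which is bisymmetric and nonnegative because the block pattern is palindromic and $A_d = J A_d J$. Its spectrum contains $\omega_k$ with multiplicity $2$ for $k<d$ and $\omega_d$ with multiplicity $1$, together with the remaining $\lambda_{kj}$'s in the multiplicities dictated by the given partition. Next, assemble an $n \times p_0$ matrix $X$ whose $k$-th column is $x_k$ embedded into the $k$-th diagonal block for $k \leq d$ and $J x_{p_0-k+1}$ embedded into the $k$-th block for $k > d$. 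The columns of $X$ are orthonormal eigenvectors of $A$ associated with the ordered list $(\omega_1, \ldots, \omega_{d-1}, \omega_d, \omega_{d-1}, \ldots, \omega_1)$, and a direct block-by-block check gives the identity $J_n X J_{p_0} = X$.

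Finally, set $\Omega = \mathrm{diag}(M)$ and $C = M - \Omega$, so that $C$ is a bisymmetric, entrywise nonnegative matrix with zero diagonal satisfying $\Omega + C = M$. Applying Theorem \ref{sotos} to $A$, $X$, and $C$ produces the symmetric matrix $B = A + XCX^T$ whose spectrum is obtained from that of $A$ by replacing the $p_0$ eigenvalues $\omega_1, \ldots, \omega_d, \ldots, \omega_1$ with the eigenvalues of $\Omega + C = M$, namely $\lambda_{01}, \ldots, \lambda_{0p_0}$. Counting multiplicities shows that $\mathrm{spec}(B) = \Lambda$. Bisymmetry of $B$ follows from
$$J_n (XCX^T) J_n = (J_n X J_{p_0})(J_{p_0} C J_{p_0})(J_{p_0} X^T J_n) = XCX^T,$$
together with bisymmetry of $A$, and nonnegativity of $B$ follows from nonnegativity of $A$, $C$, and of every column of $X$.

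The main obstacle is the bookkeeping needed to (i) verify the identity $J_n X J_{p_0} = X$ forced by the palindromic block structure and (ii) justify that $\omega_k$ may be taken as the Perron eigenvalue of $A_k$ while still respecting $0 \leq \omega_k \leq \lambda_1$, which is precisely how the hypothesis is used in the proof of Theorem \ref{julio1}. The one genuinely new feature of the odd case is that the central slot cannot be filled by a $J$-conjugate pair of symmetric blocks, so condition (1) must separately supply a bisymmetric nonnegative realizer of the $d$-th spectrum; this is exactly the extra requirement that appears in (1) compared with the even case.
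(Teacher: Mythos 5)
Your construction is exactly the machinery this paper uses elsewhere (it is how Lemma \ref{lep} and Theorem \ref{imsoto3} are proved: a palindromic block-diagonal bisymmetric matrix, an eigenvector matrix $X$ with $J_nXJ_{p_0}=X$, and the Rado-type Theorem \ref{sotos} with $C=M-\Omega$); note the paper itself never reproves Theorem \ref{julio2} — it only quotes it — so the fair comparison is with the proof of its variant Theorem \ref{imsoto3}. The substantive problem is the step ``choose each $\omega_k$ to be the Perron eigenvalue of $A_k$.'' The $\omega_k$ are not free to be chosen: they are pinned down by hypothesis (2) as the diagonal entries of the $p_0\times p_0$ matrix $M$, and hypothesis (1) only asserts $0\leq\omega_k\leq\lambda_1$ together with the existence of a (nonnegative) symmetric realizer of $\{\omega_k\}\cup\Lambda_k$. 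Nothing forces $\omega_k\geq\lambda_{k1}$, so $\omega_k$ need not be the spectral radius of $A_k$, and then $A_k$ need not possess a nonnegative unit eigenvector associated with $\omega_k$. In that case your $X$ need not be entrywise nonnegative and the nonnegativity of $A+XCX^T$ is no longer guaranteed. This is precisely why the authors, in their own variant Theorem \ref{imsoto3}, strengthen the hypothesis to ``$\Gamma_j$ is realizable by a nonnegative bisymmetric matrix with the Perron root $\omega_j$.'' You explicitly flag this as an obstacle in your last paragraph, but you never close it; as written, your argument proves the theorem only under the extra assumption that each $\omega_k$ is the Perron root of its block.

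A second, smaller omission: the identity $J_nXJ_{p_0}=X$ that you claim by ``a direct block-by-block check'' actually fails at the central column unless the Perron vector $x_d$ of the bisymmetric block $A_d$ satisfies $Jx_d=x_d$; a generic nonnegative unit Perron vector need not have this symmetry when $\omega_d$ is a repeated eigenvalue. This is repairable by Lemma \ref{canlemma2}, which supplies a nonnegative Perron eigenvector of a nonnegative bisymmetric matrix with $Jv=v$, but it must be invoked (and it again presupposes that $\omega_d$ is the Perron root of $A_d$). The rest of the bookkeeping is sound: the palindromic block sizes make $A$ bisymmetric, $J_{p_0}CJ_{p_0}=C$ because the diagonal of $M$ is palindromic, the multiplicity count of the spectrum is correct, the application of Theorem \ref{sotos} is legitimate, and your silent upgrade of ``symmetric matrix'' in condition (1) to ``symmetric nonnegative matrix'' is clearly the intended reading of the statement.
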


\bigskip

In this article, we give some sufficient conditions for the BNIEP and also obtain some related results. In Section 2, We first start with the necessary and sufficient conditions for BNIEP in the case $n \leq 4$ and then we show that SNIEP and BNIEP are different when $n = 6$. In Section 3, we provide the sufficient conditions for the BNIEP based on the sufficient conditions of the NIEP and the SNIEP. Also, some results about the BNIEP due to Julio and Soto \cite{julio} are included and proven again with a different method. In Section 4, we extend the results of Section 3 to the results on the bisymmetric positive matrices. Then we provide the sufficient conditions for the BNEIP with the prescribed diagonal entries in the last section.

\bigskip

\section{The BNIEP for Matrices Having Small Sizes}

Let $\lambda_1 \geq \lambda_2 \geq \ldots \geq \lambda_n$ be real numbers. The well-known necessary conditions for an existence of an $n \times n$ nonnegative matrix with all eigenvalues as $\lambda_1, \lambda_2, \ldots, \lambda_n$ are $\displaystyle \sum_{i=1}^n \lambda_i \geq 0$ and $\lambda_1 \geq |\lambda_n|.$ Moreover, if $n \leq 4$ then these necessary conditions are also sufficient conditions for the RNIEP. In 1997 Wuwen \cite{wuwen} showed that the RNIEP and the SNIEP are equivalent for any list of $n \leq 4$ real numbers. In fact, the fact that the RNIEP and the SNIEP are different was proven by Johnson et al. in \cite{johnson}. In this section we will show that, for $n \leq 4$, those necessary conditions for the RNIEP are also sufficient conditions for an existence of an $n \times n$ nonnegative bisymmetric matrix with all eigenvalues as $\lambda_1, \lambda_2, \ldots, \lambda_n$. Also, we show that the SNIEP is different from the BNIEP when $n=6$. We start this section by the following lemmas.

\medskip

\begin{lem} \label{canlemma1}
Let $Q$ be an $n$ $\times$ $n$ nonnegative bisymmetric matrix with the Perron root $\lambda_0.$

(a) If $n$ is even and $Q$ is of the form $Q=\begin{pmatrix}
A & JCJ \\ C & JAJ
\end{pmatrix}$ then $\lambda_0$ is the Perron root of $A+JC.$

(b) If $n=2m+1$ is odd and $Q$ is of the form
$\begin{pmatrix} A & x & JCJ \\ x^{T} & q & x^{T}J \\ C & Jx & JAJ \end{pmatrix}$
then $\lambda_0$ is the Perron root of $\begin{pmatrix}
p & \sqrt{2}x^{T} \\ \sqrt{2}x & A+JC \end{pmatrix}$.
\begin{proof}
Since $Q$ is a nonnegative matrix with the Perron root $\lambda_0,$ $\lambda_0$ is also an eigenvalue of $Q$.

(a) Let $v=\begin{pmatrix}
v_1 \\ v_2
\end{pmatrix}$ be an eigenvector of $Q$ corresponding to $\lambda_0.$ Then $Qv=\lambda_0v$ implies $$\begin{pmatrix}
A & JCJ \\ C & JAJ
\end{pmatrix}
\begin{pmatrix}
v_1\\v_2
\end{pmatrix}=\lambda_0 \begin{pmatrix}
v_1 \\ v_2
\end{pmatrix}.$$
So,

\qquad \qquad \qquad $Av_1+JCJv_2$ = $\lambda_0v_1$ and
$Cv_1+JAJv_2$ = $\lambda_0v_2$. \\
Then
$$(A+JC)\left(\frac{v_1+Jv_2}{2}\right)=\frac{Av_1+JCv_1}{2}+\frac{AJv_2+JCJv_2}{2}=\lambda_0\left(\frac{v_1+Jv_2}{2}\right).$$
This shows that $\lambda_0$ is an eigenvalue of $A+JC$. Since all eigenvalues of $A+JC$ are eigenvalues of $Q$ and $\lambda_0$ is the Perron root of $Q$, $\lambda_0$ is also the Perron root of $A+JC$.

(b) Let $v=\begin{pmatrix}
v_1 \\ c \\ v_2
\end{pmatrix}$ be an eigenvector of $Q$ corresponding to $\lambda_0.$ By the argument as in part (a), we can show that $\lambda_0$ is the Perron root of
$\begin{pmatrix}
q & \sqrt{2}x^{T} \\ \sqrt{2}x & A+JC
\end{pmatrix}$ with a corresponding eigenvector $\begin{pmatrix}
\sqrt{2}c \\ v_1+Jv_2
\end{pmatrix}.$
\end{proof}
\end{lem}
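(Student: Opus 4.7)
The plan is to invoke Theorem \ref{canthm2} (Cantoni--Butler) to reduce the problem to the spectra of the smaller blocks. In the even case, $Q$ is orthogonally similar to the block-diagonal matrix with blocks $A-JC$ and $A+JC$, so $\mathrm{spec}(A+JC)\subseteq\mathrm{spec}(Q)$; in the odd case, the analogous inclusion puts $\mathrm{spec}\bigl(\begin{smallmatrix} p & \sqrt{2}x^T \\ \sqrt{2}x & A+JC \end{smallmatrix}\bigr)$ inside $\mathrm{spec}(Q)$. The second crucial observation is that because $Q\geq 0$, the submatrices $A,C,x$ and the scalar $p$ are all nonnegative, so $A+JC$ (and the augmented block matrix in the odd case) is nonnegative and hence has a well-defined Perron root equal to its spectral radius. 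Thus it suffices to exhibit $\lambda_0$ as an eigenvalue of the claimed matrix with a nonnegative eigenvector: combined with the spectral inclusion above, this forces equality of Perron roots.

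For (a), I would choose a nonnegative Perron eigenvector $v=(v_1,v_2)^T$ of $Q$ (which exists by Perron--Frobenius), read off the block equations $Av_1+JCJv_2=\lambda_0 v_1$ and $Cv_1+JAJv_2=\lambda_0 v_2$, and left-multiply the second by $J$ before adding it to the first. This yields
\[
(A+JC)(v_1+Jv_2)=\lambda_0(v_1+Jv_2).
\]
Since $v_1$ and $Jv_2$ are both entrywise nonnegative and cannot vanish simultaneously (else $v=0$), the vector $v_1+Jv_2$ is a nonzero nonnegative eigenvector of $A+JC$ with eigenvalue $\lambda_0$. Perron--Frobenius then gives $\lambda_0\leq\rho(A+JC)\leq\rho(Q)=\lambda_0$, so $\lambda_0$ is the Perron root of $A+JC$.

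For (b), the same strategy applies with $v=(v_1,c,v_2)^T$. Combining the first and third block rows as above produces $(A+JC)(v_1+Jv_2)+2xc=\lambda_0(v_1+Jv_2)$, while the middle row reads $x^T(v_1+Jv_2)+pc=\lambda_0 c$. These two identities can be packaged as a single eigenvalue equation provided one rescales $c$ by $\sqrt{2}$; the resulting eigenvector $(\sqrt{2}c,\,v_1+Jv_2)^T$ is exactly what matches the symmetric form $\bigl(\begin{smallmatrix} p & \sqrt{2}x^T \\ \sqrt{2}x & A+JC \end{smallmatrix}\bigr)$ dictated by the Cantoni--Butler decomposition. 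Nonnegativity and nonvanishing of this eigenvector again follow from the fact that $v$ is a nonnegative Perron eigenvector of $Q$, completing the argument.

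The only genuinely delicate point is making sure the constructed eigenvector $v_1+Jv_2$ is nonzero; this is where the Perron--Frobenius choice of a nonnegative $v$ is essential, because cancellations of the form $v_1=-Jv_2$ that could occur for a general eigenvector are ruled out once both vectors are forced to lie in the nonnegative orthant. Everything else is a direct block computation suggested by the Cantoni--Butler similarity, so I do not anticipate any obstacle beyond keeping track of the $J$'s and the $\sqrt{2}$ normalization in the odd case.
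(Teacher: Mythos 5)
Your proposal is correct and follows essentially the same route as the paper: the same block computation producing $(A+JC)(v_1+Jv_2)=\lambda_0(v_1+Jv_2)$ in the even case and the rescaled eigenvector $(\sqrt{2}c,\,v_1+Jv_2)^T$ in the odd case, concluded via the spectral inclusion from Theorem \ref{canthm2}. Your extra care in choosing a nonnegative Perron eigenvector so that $v_1+Jv_2\neq 0$ is a welcome refinement of a point the paper's proof leaves implicit, but it does not change the argument.
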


\bigskip

\begin{lem} \label{canlemma2}
Let $Q$ be an $n \times n$ nonnegative bisymmetric matrix. If $v$ is an eigenvector corresponding to an eigenvalue $\lambda$ of $Q$ then so is $Jv$. Moreover, if $\lambda_0$ is the Perron root of $Q$ then there is a nonnegative eigenvector $v_0$ such that $Jv_0=v_0$.
\begin{proof}
Let $v$ be an eigenvector of $Q$ corresponding to the Perron root $\lambda$. So, $Qv=\lambda v$. Since $JQJ=Q$, $JQ=QJ$, and hence $QJv=JQv=\lambda Jv$. This shows that $Jv$ is also an eigenvector of $Q$ corresponding to $\lambda$. Now, let $\begin{pmatrix}
v_1\\
v_2
\end{pmatrix}$
be a nonnegative eigenvector corresponding to $\lambda_0$ where $\lambda_0$ is the Perron root of $Q$. If $n$ is even number, by Theorem \ref{canthm1}, $Q$ is of the form $Q=\begin{pmatrix}
A & JCJ \\ C & JAJ
\end{pmatrix}$, and thus we have
$Q\begin{pmatrix}
v_1\\v_2
\end{pmatrix}=\begin{pmatrix}
A & JCJ \\ C & JAJ
\end{pmatrix} \begin{pmatrix}
v_1\\v_2
\end{pmatrix}=\lambda_0 \begin{pmatrix}
v_1\\v_2
\end{pmatrix}$.
It follows that

$$Av_1+JCJv_2 = \lambda_0v_1,  \text{ and } Cv_1+JAJv_2 = \lambda_0v_2.$$ So,
$$\begin{pmatrix}
A & JCJ \\ C & JAJ
\end{pmatrix}
\begin{pmatrix}
\frac{v_1+Jv_2}{2} \\
\frac{Jv_1+v_2}{2}
\end{pmatrix}=\begin{pmatrix}
\frac{Av_1+AJv_2+JCv_1+JCJv_2}{2}\\
\frac{Cv_1+CJv_2+JAJv_2+JAv_1}{2}
\end{pmatrix}=\lambda_0 \begin{pmatrix}
\frac{v_1+Jv_2}{2} \\
\frac{Jv_1+v_2}{2}
\end{pmatrix}.$$
This shows that $\dfrac{1}{2}\begin{pmatrix}
v_1+Jv_2\\
Jv_1+v_2
\end{pmatrix}$ is a nonnegative eigenvector corresponding to $\lambda_0$ as desire.

On the other hand, if $n=2m+1$ is an odd number and $\begin{pmatrix}
v_1\\c\\v_2
\end{pmatrix}$ is a nonnegative eigenvector corresponding to the Perron root $\lambda_0$ then we can also show that the nonnegative vector $\dfrac{1}{2}\begin{pmatrix}
v_1+Jv_2\\2c\\Jv_1+v_2
\end{pmatrix}$ is an eigenvector corresponding to $\lambda_0$.
\end{proof}
\end{lem}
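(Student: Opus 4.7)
The first assertion is essentially a commutation fact. Since $Q$ is bisymmetric we have $JQJ = Q$, and since $J$ is an involution ($J^2 = I$), this is equivalent to $QJ = JQ$. So if $Qv = \lambda v$, then $Q(Jv) = J(Qv) = \lambda (Jv)$, and $Jv$ is also an eigenvector for $\lambda$. This is a one-line argument that avoids splitting into the even/odd block cases of Theorem \ref{canthm1}.

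For the second assertion, the plan is to produce a $J$-invariant nonnegative Perron vector by explicit symmetrization. Start from any nonnegative eigenvector $w$ of $Q$ for $\lambda_0$, which exists by the Perron--Frobenius theorem applied to the nonnegative matrix $Q$. Since $J$ is a permutation matrix, $Jw$ has the same (nonnegative, nonzero) entries as $w$, just in reversed order; in particular $Jw \geq 0$ and $Jw \neq 0$. By the first part of the lemma, $Jw$ is also an eigenvector for $\lambda_0$, so the sum
\[
v_0 \;=\; w + Jw
\]
is again an eigenvector of $Q$ for $\lambda_0$. Clearly $v_0 \geq 0$ and $v_0 \neq 0$ (each coordinate is a sum of two nonnegative numbers, and $w \neq 0$ forces at least one coordinate of $v_0$ to be strictly positive). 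Finally, using $J^2 = I$,
\[
Jv_0 \;=\; Jw + J^2 w \;=\; Jw + w \;=\; v_0,
\]
which gives the desired $J$-fixed nonnegative Perron eigenvector.

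I do not anticipate any real obstacle; the argument is a direct consequence of the identities $JQJ = Q$ and $J^2 = I$ together with Perron--Frobenius, and it uniformly handles both the even and odd cases without invoking the block form of Theorem \ref{canthm1}. If one preferred to mirror the structure used elsewhere in the paper, the same $v_0 = w + Jw$ construction specializes to the explicit symmetrized vectors $\tfrac{1}{2}(v_1 + Jv_2, Jv_1 + v_2)^{T}$ and $\tfrac{1}{2}(v_1 + Jv_2, 2c, Jv_1 + v_2)^{T}$ by writing $w$ in the block form of Theorem \ref{canthm1}, so nothing is lost.
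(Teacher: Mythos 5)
Your proof is correct. The first assertion is argued exactly as in the paper ($JQJ=Q$ and $J^2=I$ give $QJ=JQ$, hence $Q(Jv)=\lambda Jv$). For the second assertion your construction $v_0=w+Jw$ is in substance the same symmetrization the paper performs, but your verification is genuinely different and cleaner: the paper writes the Perron vector in the block form of Theorem \ref{canthm1}, splits into the even and odd cases, and checks by explicit block multiplication that $\tfrac{1}{2}\begin{pmatrix}v_1+Jv_2\\ Jv_1+v_2\end{pmatrix}$ (resp.\ $\tfrac{1}{2}\begin{pmatrix}v_1+Jv_2\\ 2c\\ Jv_1+v_2\end{pmatrix}$) is again an eigenvector, whereas you simply invoke the first part of the lemma (so $Jw$ lies in the same eigenspace) and linearity, then observe $v_0\geq 0$, $v_0\neq 0$, and $Jv_0=v_0$. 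This avoids the parity case split and the block computation entirely, and, as you note, writing $w$ in block form recovers the paper's explicit vectors, so nothing used later in the paper is lost; the only thing the paper's longer computation buys is having those block-form eigenvectors written out explicitly, which it reuses when constructing eigenvectors of the form $\begin{pmatrix}v_j\\ Jv_j\end{pmatrix}$ in Theorem \ref{imsoto3}.
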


\bigskip

\begin{lem} \label{lep} 
Let $A$ be an $m \times m$ nonnegative bisymmetric matrix with eigenvalues $\alpha_1 \geq \alpha_2 \geq \ldots \geq \alpha_m$ and let $u_1$ be a unit nonnegative eigenvector corresponding to $ \alpha_1 $ such that $Ju_1=u_1$; let $B$ be an $n \times n$ nonnegative bisymmetric matrix with eigenvalues $\beta_1 \geq \beta_2 \geq \ldots \geq \beta_n$ and let $v_1$ be a unit nonnegative eigenvector corresponding to $ \beta_1 $ such that $Jv_1=v_1$. If $\gamma_1 , \gamma_2 , \gamma_3$ are all eigenvalues of the matrix
$$\widehat{C} = \begin{pmatrix}
\beta_1 & \rho & \xi \\ \rho & \alpha_1 & \rho \\ \xi & \rho & \beta_1
\end{pmatrix},$$
where $\rho$, $\xi \geq$ 0, then the matrix
$$C = \begin{pmatrix}
B&\rho v_1 u_1^{T}&\xi v_1 v_1^{T} \\ \rho u_1 v_1^{T}&A& \rho u_1 v_1^{T} \\ \xi v_1 v_1^{T}&\rho v_1 u_1^{T}&B
\end{pmatrix}$$
is a nonnegative bisymmetric matrix with all eigenvalues as
$$\gamma_1, \gamma_2, \gamma_3, \alpha_2, \alpha_3, \ldots, \alpha_m, \beta_2, \beta_3, \ldots, \beta_n, \beta_2, \beta_3 , \ldots, \beta_n.$$
\begin{proof}
Let $u_1, u_2, \ldots, u_m$ be an orthonormal system of eigenvectors of $A$. Similarly, let $v_1$, $v_2$, $\ldots$, $v_n$ form an orthonormal system of eigenvectors of $B$ and $\begin{pmatrix} r_1 \\ s_1 \\ t_1 \end{pmatrix}, \begin{pmatrix} r_2 \\ s_2 \\ t_2 \end{pmatrix},\begin{pmatrix} r_3 \\ s_3 \\ t_3 \end{pmatrix}$ form an orthonormal system of eigenvectors of $\widehat{C}$. It's easy to see that  $\begin{pmatrix} 0 \\ u_i \\ 0 \end{pmatrix}, \begin{pmatrix} v_j \\ 0 \\ 0 \end{pmatrix}, \begin{pmatrix} 0 \\ 0 \\ v_j \end{pmatrix}, \begin{pmatrix} r_k v_1 \\ s_k u_1 \\ t_k v_1 \end{pmatrix}$ are m+2n linearly independent eigenvectors of C corresponding to eigenvalues $\alpha_i , \beta_j , \gamma_k$ for $i=2,\ldots, m, j=2,\ldots, n,$ and $k=1,2,3$, respectively.

We see that $C$ is a nonnegative matrix since $A$ and $B$ are nonnegative symmetric matrices with Perron roots $\alpha_1$ and $\beta_1$, respectively, $u_1$ and $v_1$ are nonnegative vectors and $\rho, \xi \geq 0$. Moreover, $C$ is bisymmetric because $A$ and $B$ are bisymmetric matrices and $Ju_1=u_1, Jv_1=v_1.$
\end{proof}
\end{lem}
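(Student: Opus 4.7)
My plan is to exhibit an explicit complete set of $m+2n$ linearly independent eigenvectors of $C$, corresponding to the claimed list of eigenvalues. The four properties needed (nonnegativity, bisymmetry, correct action, linear independence) split naturally.

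First, I would dispose of the structural claims. Nonnegativity of $C$ is immediate: $A$ and $B$ are nonnegative, the Perron eigenvectors $u_1$, $v_1$ may be chosen nonnegative, and $\rho,\xi \geq 0$, so every entry of each block is nonnegative. For bisymmetry, symmetry of $C$ follows because $A=A^T$, $B=B^T$, and the blocks $\rho v_1 u_1^T$ appear in transposed positions, as does $\xi v_1 v_1^T$ on the skew corners. To see $JCJ=C$, apply the block structure of $J$ to the $3 \times 3$ block form of $C$ and use $JAJ=A$, $JBJ=B$, $Ju_1=u_1$, and $Jv_1=v_1$; each block is then mapped to its reflected counterpart.

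Next, I would verify the spectrum by direct computation on the four families of candidate eigenvectors. For $\begin{pmatrix} 0 \\ u_i \\ 0 \end{pmatrix}$ with $i \geq 2$, the cross contributions vanish because $u_1^T u_i = 0$, and the middle block returns $A u_i = \alpha_i u_i$, giving eigenvalue $\alpha_i$. For $\begin{pmatrix} v_j \\ 0 \\ 0 \end{pmatrix}$ and $\begin{pmatrix} 0 \\ 0 \\ v_j \end{pmatrix}$ with $j \geq 2$, the inner products $v_1^T v_j$ kill every off-diagonal interaction, and one is left with $B v_j = \beta_j v_j$, accounting for $\beta_j$ twice. Finally, for $\begin{pmatrix} r_k v_1 \\ s_k u_1 \\ t_k v_1 \end{pmatrix}$, a block-wise expansion using $B v_1 = \beta_1 v_1$, $A u_1 = \alpha_1 u_1$, and the unit-norm identities $u_1^T u_1 = v_1^T v_1 = 1$ reduces the action of $C$ precisely to $\widehat{C}\begin{pmatrix} r_k \\ s_k \\ t_k \end{pmatrix}$ tensored with the appropriate Perron directions, which by hypothesis equals $\gamma_k \begin{pmatrix} r_k \\ s_k \\ t_k \end{pmatrix}$.

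The counting gives $(m-1) + 2(n-1) + 3 = m+2n$ eigenvectors, matching the order of $C$. Linear independence is transparent because the three families occupy orthogonal slices: the first family has its support in the middle block orthogonal to $u_1$, the second and third have supports in the first and third blocks orthogonal to $v_1$, and the fourth family spans the remaining three-dimensional subspace $\operatorname{span}(v_1) \oplus \operatorname{span}(u_1) \oplus \operatorname{span}(v_1)$ (with the three vectors $(r_k,s_k,t_k)$ independent since $\widehat{C}$ is symmetric). I do not expect a genuine obstacle: the only point requiring care is the bookkeeping in the last family, where one must cleanly separate the scalar action $\widehat{C}$ from the vector directions $u_1,v_1$; this is where the normalization $\|u_1\|=\|v_1\|=1$ is actually used. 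Everything else is direct block-matrix algebra.
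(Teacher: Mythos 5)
Your proposal is correct and follows essentially the same route as the paper: both arguments exhibit the explicit families $\begin{pmatrix}0\\ u_i\\ 0\end{pmatrix}$, $\begin{pmatrix}v_j\\ 0\\ 0\end{pmatrix}$, $\begin{pmatrix}0\\ 0\\ v_j\end{pmatrix}$, $\begin{pmatrix}r_k v_1\\ s_k u_1\\ t_k v_1\end{pmatrix}$ as $m+2n$ independent eigenvectors of $C$ and then check nonnegativity and bisymmetry from $A$, $B$, $\rho,\xi\geq 0$, $Ju_1=u_1$, $Jv_1=v_1$. Your write-up merely spells out the block computations and the independence count that the paper leaves as ``easy to see.''
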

\bigskip

Note that Lemma \ref{lep} is a special case of Theorem \ref{julio2}. However, we need this lemma in order to construct our desired bisymmetric nonnegative matrix. In Lemma \ref{lep}, we see that the eigenvalues $\gamma_1$, $\gamma_2$ and $\gamma_3$ of $\widehat{C}$ depend on $\rho$ and $\xi$. If $\alpha_1 \geq \beta_1, \rho = \sqrt{\frac{-(\alpha_1-\beta_1-a)(a+b)}{2}}$ and $\xi = -b$, where $a$, $b$ are real numbers such that $\alpha_1-\beta_1$ $\geq$ $a$ $\geq b$ and $a+b \leq 0$, then $\alpha_1 - (a+b)$, $\beta_1+a$ and $\beta_1+b$ are all eigenvalues of $\widehat{C}$. Then we have the following result.

\bigskip

\begin{cor} \label{cor1}
Let $A$ and $B$ be nonnegative bisymmetric matrices as in Lemma \ref{lep} with $\alpha_1 \geq \beta_1.$ If $a$, $b$ are real numbers such that $\alpha_1-\beta_1 \geq a \geq b$ and $a+b \leq 0$ then there is a nonnegative bisymmetric matrix with all eigenvalues as $\alpha_1 - (a+b), \beta_1+a$, $\beta_1+b$, $\alpha_2$ , $\alpha_3$, $\ldots$, $\alpha_m$ , $\beta_2$, $\beta_3$, $\ldots$, $\beta_n$, $\beta_2$, $\beta_3$, $\ldots$, $\beta_n$.
\end{cor}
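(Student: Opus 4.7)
The plan is to apply Lemma \ref{lep} with the explicit choice $\xi = -b$ and $\rho = \sqrt{-(\alpha_1 - \beta_1 - a)(a+b)/2}$ foreshadowed in the paragraph preceding the statement. Two verifications are then required: that $\rho$ and $\xi$ are nonnegative real numbers, and that the $3 \times 3$ matrix $\widehat{C}$ built from these values has eigenvalues precisely $\alpha_1 - (a+b)$, $\beta_1 + a$, and $\beta_1 + b$.

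For nonnegativity, $\xi = -b \geq 0$ follows from the inequalities $a \geq b$ and $a + b \leq 0$, which together force $b \leq 0$ (indeed $2b \leq a+b \leq 0$). For $\rho$, the hypothesis $\alpha_1 - \beta_1 \geq a$ gives $\alpha_1 - \beta_1 - a \geq 0$, and combined with $a+b \leq 0$ the quantity under the square root is nonnegative, so $\rho$ is a nonnegative real number.

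For the spectrum of $\widehat{C}$, the cleanest route is to note that $\widehat{C}$ is itself a $3 \times 3$ bisymmetric matrix (the case $m = 1$, $A = [\beta_1]$, $p = \alpha_1$, $x = \rho$, $C = \xi$ of Theorem \ref{canthm1}(2)), so by Theorem \ref{canthm2}(2) it is orthogonally similar to a block-diagonal matrix whose scalar block is $\beta_1 - \xi = \beta_1 + b$ and whose $2 \times 2$ block is $M = \begin{pmatrix} \alpha_1 & \sqrt{2}\rho \\ \sqrt{2}\rho & \beta_1 - b \end{pmatrix}$. This immediately gives $\beta_1 + b$ as one eigenvalue. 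I would then check that $\mathrm{tr}\,M = \alpha_1 + \beta_1 - b$ matches the sum of the two remaining proposed eigenvalues, and that, using $2\rho^2 = -(\alpha_1 - \beta_1 - a)(a+b)$, a short expansion yields $\det M = \alpha_1(\beta_1 - b) - 2\rho^2 = (\alpha_1 - (a+b))(\beta_1 + a)$. Hence the eigenvalues of $M$ are exactly $\alpha_1 - (a+b)$ and $\beta_1 + a$.

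With the three eigenvalues of $\widehat{C}$ so identified, Lemma \ref{lep} produces a nonnegative bisymmetric matrix with the prescribed spectrum, completing the argument. The only substantive computation is the determinant identity for $M$, which is precisely the algebraic identity that motivates the chosen formula for $\rho$; beyond that, the argument is pure bookkeeping, and I expect no genuine obstacle.
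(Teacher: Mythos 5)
Your proposal is correct and follows exactly the route the paper intends: choose $\xi=-b$ and $\rho=\sqrt{-(\alpha_1-\beta_1-a)(a+b)/2}$ as indicated in the paragraph preceding the corollary and feed the resulting $\widehat{C}$ into Lemma \ref{lep}. Your trace--determinant check of the $2\times 2$ block (via Theorem \ref{canthm2}) simply makes explicit the eigenvalue computation the paper asserts without detail, so there is nothing further to add.
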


\bigskip
Let $A$ and $B$ be square matrices and $B$ is of the form $\begin{pmatrix} B_{11} & B_{12} \\ B_{21} & B_{22} \end{pmatrix}$. We know that the eigenvalues of matrix $\begin{pmatrix} B_{11} & & B_{12} \\ &A& \\ B_{21} & & B_{22} \end{pmatrix}$ are all the eigenvalues of the matrix $\begin{pmatrix} A & \\ & B \end{pmatrix}$. We use this fact and Lemma \ref{lep} to construct a nonnegative bisymmetric matrix with all eigenvalues $\lambda_1 \geq \lambda_2 \geq \ldots \geq \lambda_n$, where $n \leq 4$, $\lambda_1 \geq |\lambda_n |$ and $\sum_{i=1}^n \lambda_{i} \geq 0$.

\bigskip

\begin{thm} \label{small}
Let $\lambda_1 \geq \lambda_2 \geq \ldots \geq \lambda_n$ be real numbers. If $n \leq 4$ then the necessary conditions $\lambda_1 \geq |\lambda_n |$ and $\displaystyle \sum_{i=1}^n \lambda_{i} \geq 0$ are also the sufficient conditions for an existence of nonnegative bisymmetric matrix with all eigenvalues as $\lambda_1, \lambda_2, \ldots, \lambda_n.$
\begin{proof}

First, we consider when $n=2$. In this case, we have $\lambda_1 \geq \lambda_2$ and $\lambda_1 + \lambda_2 \geq 0$. Then the matrix $Q = \begin{pmatrix} \frac{\lambda_1+\lambda_2}{2} & \frac{\lambda_1-\lambda_2}{2} \\ \frac{\lambda_1-\lambda_2}{2} & \frac{\lambda_1+\lambda_2}{2} \end{pmatrix}$ is a nonnegative bisymmetric matrix with eigenvalues $\lambda_1$ and $\lambda_2$.

Next, suppose that $n=3$. In this case, we have $\lambda_1 + \lambda_3 \geq 0$.
If $\lambda_2 \geq 0$, then the matrix $Q = \begin{pmatrix} \frac{\lambda_1+\lambda_3}{2} & &\frac{\lambda_1-\lambda_3}{2} \\ & \lambda_2 & \\ \frac{\lambda_1-\lambda_3}{2} & &\frac{\lambda_1+\lambda_3}{2} \end{pmatrix}$ will be a desired nonnegative bisymmetric matrix. If $\lambda_2 < 0$, then using Corollary \ref{cor1} with $A=\begin{pmatrix}\lambda_1+\lambda_2+\lambda_3  \end{pmatrix}$, $B=\begin{pmatrix}
0 \end{pmatrix}$, $a=\lambda_2$ and $b=\lambda_3$, we can construct the desired matrix $Q$ as
$$Q =\begin{pmatrix} 0 & \sqrt{\frac{-(\lambda_1+\lambda_3)(\lambda_2+\lambda_3)}{2}} & -\lambda_3 \\ \sqrt{\frac{-(\lambda_1+\lambda_3)(\lambda_2+\lambda_3)}{2}} & \lambda_1+\lambda_2+\lambda_3 & \sqrt{\frac{-(\lambda_1+\lambda_3)(\lambda_2+\lambda_3)}{2}} \\ -\lambda_3 &\sqrt{\frac{-(\lambda_1+\lambda_3)(\lambda_2+\lambda_3)}{2}}&0 \end{pmatrix}.$$

Finally, we consider the case $n=4$. In this case, we have $\lambda_1 + \lambda_4 \geq 0$. If $\lambda_2 + \lambda_3 \geq 0$ then the matrix $A=\begin{pmatrix} \frac{\lambda_2+\lambda_3}{2} & \frac{\lambda_2-\lambda_3}{2} \\ \frac{\lambda_2-\lambda_3}{2} & \frac{\lambda_2+\lambda_3}{2} \end{pmatrix}$ is a nonnegative matrix with eigenvalues $\lambda_2, \lambda_3$ and the matrix $B=\begin{pmatrix} \frac{\lambda_1+\lambda_4}{2} & \frac{\lambda_1-\lambda_4}{2} \\ \frac{\lambda_1-\lambda_4}{2} & \frac{\lambda_1+\lambda_4}{2} \end{pmatrix}$ is a nonnegative matrix with eigenvalues $\lambda_1, \lambda_4$. Then the nonnegative bisymmetric matrix
$$Q= \begin{pmatrix} \frac{\lambda_1+\lambda_4}{2} & &\frac{\lambda_1-\lambda_4}{2} \\ & A & \\ \frac{\lambda_1-\lambda_4}{2} & & \frac{\lambda_1+\lambda_4}{2} \end{pmatrix}$$
is the desired matrix. If $\lambda_2 + \lambda_3 < 0$ then using Corollary \ref{cor1} with

$A=\begin{pmatrix} \frac{\lambda_1+\lambda_2+\lambda_3+\lambda_4}{2} & \frac{\lambda_1+\lambda_2+\lambda_3-\lambda_4}{2} \\ \frac{\lambda_1+\lambda_2+\lambda_3-\lambda_4}{2} & \frac{\lambda_1+\lambda_2+\lambda_3+\lambda_4}{2} \end{pmatrix}$
, $B=\left(0\right)$, $a=\lambda_2$ and
$b=\lambda_3$,

we can construct the desired matrix $Q$ as
$$\begin{pmatrix}
0 & \sqrt{\frac{-(\lambda_1+\lambda_3)(\lambda_2+\lambda_3)}{2}}  u^{T}_1 & -\lambda_3 \\ \sqrt{\frac{-(\lambda_1+\lambda_3)(\lambda_2+\lambda_3)}{2}}u_1 & A & \sqrt{\frac{-(\lambda_1+\lambda_3)(\lambda_2+\lambda_3)}{2}}u_1 \\ -\lambda_3 &\sqrt{\frac{-(\lambda_1+\lambda_3)(\lambda_2+\lambda_3)}{2}} u^{T}_1 & 0
\end{pmatrix},$$
where $u_1= \begin{pmatrix} \frac{1}{\sqrt{2}} \\ \frac{1}{\sqrt{2}} \end{pmatrix}$
is a unit eigenvector of A corresponding to eigenvalue $\lambda_1+\lambda_2+\lambda_3$.
\end{proof}
\end{thm}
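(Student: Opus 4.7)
The plan is to proceed by case analysis on $n \in \{2,3,4\}$, writing down an explicit bisymmetric nonnegative matrix in each case. The pattern is: when the ``inner'' eigenvalues are nonnegative, a direct block construction works; when some inner eigenvalue is negative, we invoke Corollary \ref{cor1} with a carefully chosen pair $(a,b)$ of shift parameters to push the negativity onto the outer $3\times 3$ block $\widehat{C}$, which absorbs it. The trivial case $n=1$ is handled by the $1\times 1$ matrix $(\lambda_1)$, noting $\lambda_1\geq |\lambda_1|$ forces $\lambda_1\geq 0$.

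For $n=2$, I would simply exhibit $Q=\tfrac12\begin{pmatrix}\lambda_1+\lambda_2 & \lambda_1-\lambda_2\\ \lambda_1-\lambda_2 & \lambda_1+\lambda_2\end{pmatrix}$ and verify that the hypotheses $\lambda_1\geq|\lambda_2|$ and $\lambda_1+\lambda_2\geq 0$ give the required nonnegativity (the off-diagonal entry is $\geq 0$ since $\lambda_1\geq\lambda_2$, and the diagonal entry is $\geq 0$ by the trace condition). For $n=3$, I would split on the sign of $\lambda_2$. If $\lambda_2\geq 0$, use the $2\times 2$ construction for the pair $(\lambda_1,\lambda_3)$ placed in the ``corner'' positions and place $\lambda_2$ in the middle entry, giving a block diagonal (after permutation) bisymmetric matrix. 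If $\lambda_2<0$, I would apply Corollary \ref{cor1} with $A=(\lambda_1+\lambda_2+\lambda_3)$, $B=(0)$, $a=\lambda_2$, $b=\lambda_3$; I must check $\alpha_1-\beta_1=\lambda_1+\lambda_2+\lambda_3\geq a=\lambda_2$ (equivalent to $\lambda_1+\lambda_3\geq 0$, which holds) and $a+b=\lambda_2+\lambda_3\leq 0$ (which holds since $\lambda_2<0$ and $\lambda_3\leq\lambda_2<0$), and that $A$ is nonnegative ($\lambda_1+\lambda_2+\lambda_3\geq 0$ by the trace condition).

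For $n=4$, I would split on the sign of $\lambda_2+\lambda_3$. If $\lambda_2+\lambda_3\geq 0$, use the $n=2$ construction separately for the lists $\{\lambda_1,\lambda_4\}$ and $\{\lambda_2,\lambda_3\}$ and paste them together in the bisymmetric ``outer/inner'' block pattern (the $2\times 2$ realizer for $\{\lambda_2,\lambda_3\}$ sits in the middle, and the $2\times 2$ realizer for $\{\lambda_1,\lambda_4\}$ supplies the four corner entries). If $\lambda_2+\lambda_3<0$, I would apply Corollary \ref{cor1} with $B=(0)$ and $A$ the $2\times 2$ bisymmetric realizer of $\{\lambda_1+\lambda_2+\lambda_3,\lambda_4\}$ provided by the $n=2$ case, taking $a=\lambda_2$, $b=\lambda_3$; then its Perron root is $\alpha_1=\lambda_1+\lambda_2+\lambda_3$ with $Ju_1=u_1$ where $u_1=\tfrac{1}{\sqrt 2}(1,1)^T$, and the hypotheses of the corollary reduce again to $\lambda_1+\lambda_3\geq 0$ and $\lambda_2+\lambda_3\leq 0$. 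This produces the desired $4\times 4$ bisymmetric nonnegative matrix whose eigenvalues are $\alpha_1-(a+b)=\lambda_1$, $\beta_1+a=\lambda_2$, $\beta_1+b=\lambda_3$, and $\alpha_2=\lambda_4$.

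The main obstacles I expect are twofold. First, I must verify in each ``hard'' subcase that the auxiliary matrix $A$ supplied to Corollary \ref{cor1} is itself a nonnegative bisymmetric matrix of the right form and has a unit Perron eigenvector fixed by $J$; for the $n=4$ subcase this requires $\lambda_1+\lambda_2+\lambda_3+\lambda_4\geq 0$ and $\lambda_1+\lambda_2+\lambda_3\geq \lambda_4$, which follow from the trace condition and $\lambda_1\geq|\lambda_4|$ combined with $\lambda_2+\lambda_3<0$. Second, I must check the inequalities $\alpha_1-\beta_1\geq a\geq b$ and $a+b\leq 0$ that guarantee the choice of $\rho$ in Corollary \ref{cor1} is real; these reduce to $\lambda_1\geq|\lambda_n|$ plus the sign assumptions used to trigger the subcase, so they all go through. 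Once these verifications are in place, the eigenvalue accounting supplied by Lemma \ref{lep} (via Corollary \ref{cor1}) finishes the proof.
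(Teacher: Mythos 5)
Your proposal is correct and follows essentially the same route as the paper: explicit $2\times 2$ and block constructions when the inner eigenvalues (or their sum) are nonnegative, and Corollary \ref{cor1} with $B=(0)$, $a=\lambda_2$, $b=\lambda_3$ (and $A$ realizing $\lambda_1+\lambda_2+\lambda_3$, respectively $\{\lambda_1+\lambda_2+\lambda_3,\lambda_4\}$) in the negative subcases, with the same verifications of the hypotheses. No substantive differences from the paper's proof.
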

\bigskip

By Theorem \ref{small}, the BNIEP and the SNIEP are equivalent for $n \leq 4$. Next, we show that there is no $6 \times 6$ bisymmetric nonnegative matrix with the eigenvalues as $6, 6, -2, -3, -3, -4$ while the symmetric nonnegative matrix $$\begin{pmatrix}
0&3&3&&&\\
3&0&3&&&\\
3&3&0&&&\\
&&&0&\sqrt{6}&4\\
&&&\sqrt{6}&0&\sqrt{6} \\
&&&4&\sqrt{6}&0
\end{pmatrix}$$ has them as its eigenvalues, that is, the BNIEP and the SNIEP are different for $n=6$.
\bigskip

\begin{thm}
For $n=6$, the BNIEP and the SNIEP are different.
\begin{proof}
It suffices to show that there is no $6 \times 6$ bisymmetric nonnegative matrix with eigenvalues as $6, 6, -2, -3, -3, -4$. Suppose $Q$ is a $6 \times 6$ nonnegative bisymmetric matrix with these eigenvalues. Then $Q$ must be a reducible matrix by Perron-Frobenius Theorem. Therefore there is a permutation matrix $P$ such that
$$P^TQP=\begin{pmatrix}
A_1 & \\
& A_2
\end{pmatrix}:=S,$$
where $A_1, A_2$ are $3 \times 3$ nonnegative symmetric matrices with the set of all eigenvalues as $\lbrace 6, -3, -3 \rbrace$ and $\lbrace 6, -2, -4 \rbrace$, respectively. Since the matrix $\begin{pmatrix}
0&3&3\\
3&0&3\\
3&3&0
\end{pmatrix}$ is the only $3 \times 3$ nonnegative symmetric matrix which has eigenvalues $6, -3, -3$, $A_1$ must be this matrix. Let $A_2=\begin{pmatrix}
0&\alpha &\beta \\
\alpha &0&\gamma \\
\beta &\gamma &0
\end{pmatrix}$ where $\alpha, \beta, \gamma \geq 0.$ Then we have
$$\begin{pmatrix}
0 & a & b & c & d & e\\
a & 0 & f & g & h & d\\
b & f & 0 & i & g & c\\
c & g & i & 0 & f & b\\
d & h & g & f & 0 & a\\
e & d & c & b & a & 0
\end{pmatrix} =: Q = P\begin{pmatrix}
0 & 3 & 3 &        &        &  \\
3 & 0 & 3 &        &        &  \\
3 & 3 & 0 &        &        &  \\
  &   &   & 0      & \alpha & \beta\\
  &   &   & \alpha & 0      & \gamma\\
  &   &   & \beta  & \gamma & 0
\end{pmatrix}P^T := PSP^T. \ \ \ldots (\ast) $$
Since $6, -2, -4$ are all eigenvalues of $A_2$, $\alpha, \beta$ and $\gamma$ can not be identical. If one of $\alpha, \beta$ and $\gamma$ is $0$, then $0$ is an eigenvlues of $A_2$, which is impossible. Therefore $\alpha, \beta$ and $\gamma$ are positive numbers. Now, we consider in two cases.

\textit{Case 1}: $\alpha, \beta$ and $\gamma$ are all distinct positive numbers.
In this case, at least two numbers of $\alpha, \beta$ and $\gamma$ must be distinct from $3$. WLOG, let $\alpha\neq 3, \beta \neq 3$ and $\alpha \neq \beta$.

Since each of value $\alpha$ and $\beta$ appears twice in the matrix $S$, $\alpha$ and $\beta$ must be on the secondary main diagonal line of the matrix $Q$. But $\alpha$ and $\beta$ are in the forth column of $S$. This is impossible because any two entries from the same column of $S$ cannot be permuted by the permutation matrix $P$ to two entries lying in the different columns of $Q$ .

\textit{Case 2}: Two of the numbers $\alpha, \beta$ and $\gamma$ are same positive numbers and different from another one. Without loss of generality, let $\alpha=\beta$ and $\alpha \neq \gamma$.

If $\alpha \neq 3$ then $\alpha$ appears four times in the matrix $S.$ Then $\alpha $ and $\beta$ must be on the different columns of $Q.$ This is impossible by the similar reasoning as in case 1.

Finally, if $\alpha = \beta=3$ and $\gamma \neq 3,$ then the characteristic polynomial of $A_2$ is $x^3-(\gamma^2+18)x-18\gamma$ which can not be equal to $(x-6)(x+2)(x+4)$ for any positive number $\gamma$.

This shows that there is no $6 \times 6$ bisymmetric nonnegative matrix with eigenvalues as $6, 6, -2, -3, -3, -4.$
\end{proof}
\end{thm}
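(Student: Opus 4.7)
The plan is to prove the negative side only, namely that no $6\times 6$ bisymmetric nonnegative matrix has spectrum $\{6,6,-2,-3,-3,-4\}$; the symmetric realization is furnished explicitly in the paragraph preceding the theorem. I would argue by contradiction: assume that $Q$ is such a matrix. Since $\sum_i \lambda_i=0$ and $Q$ is entrywise nonnegative, the diagonal of $Q$ is identically zero. Since the Perron root $6$ has algebraic multiplicity two, the Perron--Frobenius theorem forces $Q$ to be reducible, and symmetry turns the resulting block-triangular decomposition into a block-diagonal one, so for some permutation $P$ we have $P^{T}QP=A_{1}\oplus A_{2}$ with each $A_{j}$ symmetric nonnegative.

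Next I would narrow down the block sizes and spectra. A $1+5$ or $2+4$ split is impossible because in each case the larger block would inherit a strictly negative trace, so the sizes are $3+3$; among the $3+3$ spectral partitions placing one copy of $6$ in each block, the only one giving each block a nonnegative trace is $\{6,-3,-3\}\cup\{6,-2,-4\}$. A short AM--GM argument applied to the characteristic polynomial pins down the unique $3\times 3$ symmetric nonnegative matrix with spectrum $\{6,-3,-3\}$ as the one with zero diagonal and every off-diagonal entry equal to $3$, and this determines $A_{1}$. The block $A_{2}$ therefore has the form
$$A_{2}=\begin{pmatrix}0 & \alpha & \beta\\ \alpha & 0 & \gamma\\ \beta & \gamma & 0\end{pmatrix}$$
with $\alpha^{2}+\beta^{2}+\gamma^{2}=28$, $\alpha\beta\gamma=24$, and $\alpha,\beta,\gamma>0$ (a zero entry would force $0$ into the spectrum).

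The main obstacle is to rule out every permutation $P$ that could convert such an $S:=A_{1}\oplus A_{2}$ into a bisymmetric matrix $Q$. The combinatorial hinge is that, under the bisymmetry action, the off-diagonal positions of $Q$ split into orbits of size $2$ (the three antidiagonal pairs) or $4$ (six generic quadruples), and the three antidiagonal positions lie in three distinct columns. In the case where $\alpha,\beta,\gamma$ are pairwise distinct and (WLOG) $\alpha,\beta\neq 3$ with $\alpha\neq \beta$, each of $\alpha$ and $\beta$ occurs exactly twice in $S$ and thus must fill one antidiagonal orbit of $Q$; tracking $\alpha$ at $S_{4,5},S_{5,4}$ forces $\pi(4)+\pi(5)=7$, and tracking $\beta$ at $S_{4,6},S_{6,4}$ forces $\pi(4)+\pi(6)=7$, whence $\pi(5)=\pi(6)$, contradicting injectivity of $\pi$. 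In the case $\alpha=\beta\neq\gamma$ with $\alpha\neq 3$, the four equal copies of $\alpha$ would have to fill either one generic orbit or two antidiagonal orbits of $Q$, but a column-multiplicity count rules both out (column $\pi(4)$ acquires two copies of $\alpha$, whereas each admissible orbit distributes its four entries across four distinct columns). Finally, the sub-subcase $\alpha=\beta=3$, $\gamma\neq 3$ is dispatched directly: the characteristic polynomial $\lambda^{3}-(18+\gamma^{2})\lambda-18\gamma$ cannot equal $\lambda^{3}-28\lambda-48$ for any positive $\gamma$, as the two coefficient equations yield $\gamma=\sqrt{10}$ and $\gamma=8/3$ respectively. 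The hardest step is the clean formulation of these orbit-column matching constraints, which is the real engine behind Cases 1 and 2.
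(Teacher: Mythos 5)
Your proposal is correct and takes essentially the same route as the paper: Perron--Frobenius forces reducibility, trace/Perron constraints pin the decomposition to a $3+3$ direct sum with spectra $\{6,-3,-3\}$ and $\{6,-2,-4\}$, the matrix $A_1$ is identified as the unique realization of $\{6,-3,-3\}$, and the same two-case orbit/column-counting argument on the images of $\alpha,\beta,\gamma$ under the permutation (plus the characteristic-polynomial computation for $\alpha=\beta=3$, $\gamma\neq 3$) yields the contradiction. One small nit in a detail the paper itself omits: in ruling out the $2+4$ split, when both $6$'s lie in the $4\times 4$ block it is the smaller block (two negative eigenvalues) that has negative trace, not the larger one, but this is immediate to repair.
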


\section{Sufficient Conditions for the BNIEP}
In this section, we will find sufficient conditions for the existence of a nonnegative bisymmetric matrix and a positive bisymmetric matrix with prescribed eigenvalues. We begin this section by considering the sufficient condition of Theorem \ref{suleimanova} in which it was improved to the bisymmetric case by A. Julio and Soto \cite{julio}. However, we give another proof using Lemma \ref{lep}.

\begin{thm} (Julio and Soto, Theorem 4.3 in\cite{julio}) \label{impsul}

Let $\lambda_0 \geq 0 \geq \lambda_1 \geq \ldots \geq \lambda_n$ be real numbers. If $\displaystyle \sum_{i=0}^n \lambda_{i} \geq 0$  then there is an $(n+1)\times (n+1)$ nonnegative bisymmetric matrix with all eigenvalues as $\lambda_0 , \lambda_1 , \ldots, \lambda_n.$
\begin{proof}
We will prove this theorem by induction on $n$. If $n=0$, the assertion is clear. If $n$ = 1,2 and 3, the assertions follow from Theorem \ref{small}.

Let $n \geq 4$ and suppose that the assertion is true for all smaller systems of numbers such that satisfy our assumption.
If $\lambda_0$ = 0 then $\lambda_i = 0$, for all $i=0, 1, 2,...,n,$ and the zero matrix of size $(n+1)\times (n+1)$ is the required matrix.
Now suppose $\lambda_0 > 0.$ Clearly, the system $\lambda_0 + \lambda_1 +\lambda_2, \lambda_3, \lambda_4,..., \lambda_n$ satisfies the assumption. By the induction hypothesis, there is a nonnegative bisymmetric matrix $A$ with eigenvalues $\lambda_0 + \lambda_1 + \lambda_2, \lambda_3, \lambda_4,..., \lambda_n$. It's easy to check that the matrix $A$ and $B=(0)$, $a= \lambda_1$ and $b=\lambda_2$ satisfy the condition in Corollary \ref{cor1}. Therefore, there is a nonnegative bisymmetric matrix $Q$ with all eigenvalues as $\lambda_0, \lambda_1,..., \lambda_n$.
\end{proof}
\end{thm}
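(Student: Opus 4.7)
The plan is to argue by induction on $n$, using Theorem \ref{small} for the base and Corollary \ref{cor1} for the inductive step. The base cases $n = 0, 1, 2, 3$ are handled by Theorem \ref{small} because the list has $n + 1 \le 4$ elements; the degenerate case $\lambda_0 = 0$ forces every $\lambda_i = 0$ and is realized by the zero matrix, so I may assume $\lambda_0 > 0$ from here on.

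For the inductive step I would collapse the first three entries and work with the shorter list
\[
\mu_0 := \lambda_0 + \lambda_1 + \lambda_2, \qquad \mu_j := \lambda_{j+2}\ \text{ for } j = 1, \ldots, n-2.
\]
Before invoking the induction hypothesis I would check that this list still satisfies the Suleimanova condition: the tail $\mu_1 \ge \cdots \ge \mu_{n-2}$ is nonpositive and nonincreasing because each $\lambda_i$ with $i \ge 3$ is; the new leading term is nonnegative because $\lambda_0 + \lambda_1 + \lambda_2 \ge -(\lambda_3 + \cdots + \lambda_n) \ge 0$, using $\sum_{i=0}^n \lambda_i \ge 0$ together with $\lambda_i \le 0$ for $i \ge 3$; and $\sum_j \mu_j = \sum_i \lambda_i \ge 0$. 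The induction hypothesis then supplies a nonnegative bisymmetric matrix $A$ of order $n - 1$ whose spectrum is $\{\mu_0, \ldots, \mu_{n-2}\}$ and whose Perron root is $\alpha_1 = \lambda_0 + \lambda_1 + \lambda_2$.

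To finish I would feed $A$, together with $B = (0)$, $a = \lambda_1$, and $b = \lambda_2$, into Corollary \ref{cor1}. The requirements $a \ge b$ and $a + b \le 0$ are immediate from $\lambda_1 \ge \lambda_2$ and $\lambda_1, \lambda_2 \le 0$, and $\alpha_1 \ge \beta_1 = 0$ was noted above. The only nontrivial inequality is $\alpha_1 - \beta_1 \ge a$, i.e.\ $\lambda_0 + \lambda_2 \ge 0$, which I view as the main (albeit short) obstacle in the whole argument; it follows from the full Suleimanova sum via
\[
\lambda_0 \ge -\sum_{i=1}^n \lambda_i = \sum_{i=1}^n |\lambda_i| \ge |\lambda_2|.
\]
Corollary \ref{cor1} then produces a nonnegative bisymmetric matrix of order $(n-1) + 2 = n + 1$ whose eigenvalues are $\alpha_1 - (a+b) = \lambda_0$, $\beta_1 + a = \lambda_1$, $\beta_1 + b = \lambda_2$, together with the remaining eigenvalues $\lambda_3, \ldots, \lambda_n$ of $A$ (the duplicated tail of $B$ is vacuous because $B$ is $1 \times 1$), closing the induction.
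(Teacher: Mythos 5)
Your proposal is correct and follows essentially the same route as the paper: induction on $n$ with Theorem \ref{small} for the base cases, collapsing $\lambda_0,\lambda_1,\lambda_2$ into $\lambda_0+\lambda_1+\lambda_2$, and then applying Corollary \ref{cor1} with $B=(0)$, $a=\lambda_1$, $b=\lambda_2$. The only difference is that you spell out the verification (in particular $\lambda_0+\lambda_2\ge 0$) that the paper dismisses as ``easy to check,'' which is a welcome but not a structural addition.
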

\bigskip

Now, we consider the sufficient condition in theorem \ref{borobia}. In section 2, we proved that there is no $6 \times 6$ bisymmetric nonnegative matrix with eigenvalues $6, 6, -2, -3, -3, -4$ even though this list of numbers satisfies the condition of Theorem \ref{borobia}. This implies that the sufficient condition of Theorem \ref{borobia} can not be improved to the bisymmetric case. However, if the size of each patition $\Lambda_k$ of $\Lambda$ in Theorem \ref{borobia} is odd for all $k \leq \text{min} \lbrace M, S \rbrace,$ then we show that with this additional assumption together with the assumption in Theorem \ref{borobia}, there is a nonnegative bisymmetric matrix with the prescribed eigenvalues. We start with the following lemma.
\bigskip

\begin{lem} \label{lf3}
Let $Q_1$ and $Q_2$ be nonnegative bisymmetric matrices with all eigenvalues $\alpha_0 \geq \alpha_1 \geq \ldots \geq \alpha_m$ and $\beta_0 \geq \beta_1 \geq \ldots \geq \beta_n.$ If $\alpha_0 \geq \beta_0$ and $m$ or $n$ is odd, then for any $\varepsilon \geq 0$, there is a nonnegative bisymmetric matrix with all eigenvalues as $\alpha_0+\varepsilon, \beta_0-\varepsilon, \alpha_1 , \ldots, \alpha_m,\beta_1, \ldots, \beta_n.$
\begin{proof}
Suppose m is odd and n is even. Then $Q_1=\begin{pmatrix}
A & JCJ \\ C & JAJ
\end{pmatrix}$ and $Q_2=\begin{pmatrix}
D   & y  & JEJ \\
y^T & q  & y^TJ \\
E   & Jy & JDJ
\end{pmatrix},$
where $A, C$ are $(\frac{m+1}{2})$ $\times$ $(\frac{m+1}{2})$ matrices, $D, E$ are $\frac{n}{2} \times \frac{n}{2}$ matrices, and $A=A^T, D=D^T, C^T=JCJ, E^T=JEJ.$
Now, let $\alpha_0,\alpha_{1_1},\alpha_{1_2},...,\alpha_{1_\frac{m+1}{2}}$ be all eigenvalues of the matrix $A+JC$ and $\beta_0, \beta_{1_1}, \beta_{1_2},..., \beta_{1_\frac{n}{2}}$ be all eigenvalues of the matrix $\begin{pmatrix}
q & \sqrt{2}y^T \\ \sqrt{2}y & A+JC
\end{pmatrix}.$
By Theorem 2.1 and Theorem 2.2 in \cite{fdl}, we have
the matrix
$$\begin{pmatrix}
A+JC                & \rho cu_0      & \rho u_0 v_0^T    \\
\rho cu_0^T         &      q         & \sqrt{2}y^T       \\
\rho v_0 u_{0}^{T}  & \sqrt{2}y      &  D+JE
\end{pmatrix},$$
where $\rho = \sqrt{\varepsilon(\alpha_0-\beta_0+\varepsilon)}$, and $u_0$ and $\begin{pmatrix} c \\ v_0 \end{pmatrix}$
are nonnegative unit eigenvectors corresponding to $\alpha_0$, $\beta_0$, respectively, has all eigenvalues as $\alpha_0+\varepsilon$, $\beta_0-\varepsilon$, $\alpha_{1_1}$, $\alpha_{1_2}$, $\ldots$,$\alpha_{1_\frac{m+1}{2}},\beta_{1_1}$, $\beta_{1_2}$, $\ldots$, $\beta_{1_\frac{n}{2}}$, and this matrix is similar to the matrix
$\begin{pmatrix}
A+JC                & \rho u_0 v_0^T & \rho cu_0         \\
\rho v_0 u_{0}^{T}  &  D+JE          & \sqrt{2}y         \\
\rho cu_0^T         & \sqrt{2}y^T    &      q              \
\end{pmatrix}.$
Finally, let the matrix
$$X+JY = \begin{pmatrix}
A+JC           & \rho u_0 v_0^T \\
\rho v_0 u_0^T & D+JE
\end{pmatrix},$$
$$X-JY=\begin{pmatrix}
D-JE &     \\
     & A-JC
\end{pmatrix},$$ and
$w=\frac{1}{\sqrt{2}}\begin{pmatrix}
\rho cu_0 \\ \sqrt{2}y
\end{pmatrix}.$
Then the matrix
$$\begin{pmatrix}
X  & w & JYJ \\
w^T& q & w^T J \\
Y  & Jw & JXJ
\end{pmatrix}$$ is the desired matrix.
In the other cases, we can construct our nonnegative bisymmetric matrices in a similar way as we constructed above.
\end{proof}
\end{lem}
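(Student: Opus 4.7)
The plan is to reduce the construction to a rank-one symmetric perturbation acting on the Perron ``half'' of each bisymmetric matrix, using the Cantoni--Butler decomposition from Theorem \ref{canthm2} as the principal tool. First I would invoke Lemma \ref{canlemma2} to obtain nonnegative unit Perron eigenvectors $u_0$ of $Q_1$ and $v_0$ of $Q_2$ satisfying $Ju_0=u_0$ and $Jv_0=v_0$; these will be the directions along which the coupling between the two spectra takes place.

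Next I would write $Q_1$ and $Q_2$ in their Cantoni--Butler canonical forms, distinguishing cases by the parity of $m+1$ and $n+1$. By Theorem \ref{canthm2} each one is orthogonally similar to a block diagonal matrix whose ``plus'' block $A+JC$ (or its bordered analog with a central row/column and a $\sqrt{2}\,x^T$ term when the size is odd) carries the Perron root, while the ``minus'' block $A-JC$ carries the complementary spectrum that we wish to preserve intact. The hypothesis that $m$ or $n$ is odd guarantees that at most one of the two matrices contributes a central row and column, so the two ``plus'' parts can be merged into a single coupled block without producing two competing centers.

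With the structure in place, I would assemble a symmetric matrix of the form
\[
\begin{pmatrix} A+JC & \rho\, u_0 v_0^T \\ \rho\, v_0 u_0^T & D+JE \end{pmatrix},
\]
bordered by the central row and column coming from whichever of $Q_1, Q_2$ has odd size, with scalar $\rho=\sqrt{\varepsilon(\alpha_0-\beta_0+\varepsilon)}$. A Rado/Soto-style computation, as in Theorem \ref{sotos} and Theorems~2.1--2.2 of \cite{fdl}, shows that this perturbation replaces the pair $(\alpha_0,\beta_0)$ by $(\alpha_0+\varepsilon,\beta_0-\varepsilon)$ and leaves the remaining eigenvalues of $A+JC$ and $D+JE$ unchanged. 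Concatenating this enlarged ``plus'' block with the block diagonal $\mathrm{diag}(A-JC,\,D-JE)$ on the ``minus'' side and pulling everything back through the inverse Cantoni--Butler orthogonal similarity then produces a bisymmetric matrix with the desired spectrum; concretely, setting $X+JY$ equal to the perturbed plus block, $X-JY$ equal to the untouched minus block, and letting $w$ collect the central couplings, the assembled matrix has the shape given in Theorem \ref{canthm1}(2).

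The main obstacle will be producing a matrix that is simultaneously bisymmetric and entrywise nonnegative after the algebraic rearrangement. Bisymmetry follows from $Ju_0=u_0$ and $Jv_0=v_0$, which makes the coupling block $\rho\,u_0 v_0^T$ behave correctly under the central reflection; nonnegativity reduces to checking that the building blocks $Q_1, Q_2$ and the coupling term are entrywise nonnegative, which is immediate since $\rho\geq 0$ and the Perron eigenvectors are chosen nonnegative. The remaining bookkeeping is a case analysis on parities: one case handles $m$ odd with $n$ even (and by symmetry $m$ even with $n$ odd), and a second case handles $m$ and $n$ both odd, where neither Cantoni--Butler form has a central row and the assembled matrix is of even size; both cases follow the same recipe, so only one need be written out in detail.
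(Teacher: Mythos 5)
Your proposal follows essentially the same route as the paper's proof: the Cantoni--Butler decomposition of $Q_1$ and $Q_2$, a rank-one coupling $\rho\,u_0v_0^T$ with $\rho=\sqrt{\varepsilon(\alpha_0-\beta_0+\varepsilon)}$ between the Perron (``plus'') blocks justified by Fiedler's Theorems 2.1--2.2, and reassembly into bisymmetric form via $X+JY$, $X-JY$ and the central vector $w$, with the same parity case analysis. The only adjustment is that the coupling vectors should be the nonnegative unit Perron eigenvectors of the half-size plus blocks (of $A+JC$ and of the bordered block containing $D+JE$), rather than the full-size Perron eigenvectors of $Q_1,Q_2$ from Lemma \ref{canlemma2}, which is exactly what the paper uses.
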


\bigskip

\begin{thm} \label{impboro}
Let $\lambda_0 \geq \lambda_1 \geq \ldots \geq \lambda_M \geq 0 > \lambda_{M+1} \geq \ldots \geq \lambda_n$ be real numbers. If there exists a partition $\Lambda =\Lambda_1 \cup \Lambda_2 \cup \cdots \cup \Lambda_S$ of $\lbrace\lambda_{M+1},\lambda_{M+2}, \ldots,\lambda_n\rbrace$, with $M \leq S$, $|\Lambda_j|$ is odd for $j=1,2,\ldots,M,$ and $T_k =$ $\displaystyle \sum_{\lambda_i \in \Lambda_k} \lambda_{i}$, with $T_S \geq T_{S-1} \geq \ldots \geq T_1$ satisfying

\qquad (1) $\displaystyle \lambda_0 + \sum_{i\in K, i<k} (\lambda_i + T_i)+ T_k \geq 0$, for all $k \in K$, and

\qquad (2) $\displaystyle \lambda_0 + \sum_{i\in K} (\lambda_i + T_i)+ \sum_{j=M+1}^S T_j  \geq 0,$
\\where $K=\lbrace i \in \lbrace 1,2,\ldots,M\rbrace \mid \lambda_i + T_i < 0\rbrace$, then there is a nonnegative bisymmetric matrix with all eigenvalues as $\lambda_0,\lambda_1,...,\lambda_n.$

\begin{proof}
We prove by induction on $M$. If $M=0$ and $S \geq 1$, the result follows from Theorem \ref{impsul}.

If $M=1, S=1$ and 1 $\notin K$, then $\lambda_1 + T_1 \geq 0$ implies $\lambda_0 + T_1 \geq 0.$ Thus by Theorem \ref{impsul}, there is a nonnegative bisymmetric $Q$ with all eigenvalues as $\lambda_0, \lambda_2,\lambda_3, \ldots, \lambda_n$ and $Q= \begin{pmatrix} A & JCJ \\ C & JAJ \end{pmatrix}$, where $A$ and $C$ are $\frac{n}{2} \times \frac{n}{2}$ matrices, $A=A^{T}, JCJ=C^{T}$. Then the matrix $\begin{pmatrix}
A &&JCJ \\&\lambda_1&\\C&&JAJ \end{pmatrix}$ would be our nonnegative bisymmetric matrix.

If $M=1, S=1$ and 1 $\in K$, by condition (1) of the assumption, it implies that $\lambda_0 + T_1 \geq 0$, and hence this case similar to the previous case.

If $M=1, S>1$ and 1 $\notin K$, by (2), it implies that $\displaystyle \lambda_0 + \sum_{i=2}^S T_i \geq 0$. Then by Theorem \ref{impsul}, there is a nonnegative bisymmetric $Q_1$ with all eigenvalues obtained from all numbers in $\lbrace\lambda_0\rbrace, \Lambda_2, \Lambda_3, \ldots, \Lambda_S$. Since $\lambda_1 + T_1 \geq 0$ and $|\Lambda_1|$ is odd, by Theorem \ref{impsul} again, there is a nonnegative bisymmetric
$\begin{pmatrix} A & JCJ \\ C & JAJ \end{pmatrix}$
with the eigenvalues obtained from $\lbrace \lambda_1 \rbrace, \Lambda_1$. Thus, the matrix
$\begin{pmatrix}
A &&JCJ \\&Q&\\C&&JAJ
\end{pmatrix}$ is our desired matrix.

If $M=1, S>1$ and 1 $\in K$, then by (2), $\displaystyle \lambda_0+\lambda_1+T_1+\sum_{i=2}^{S} T_i \geq 0.$ By Theorem \ref{impsul}, there is a nonnegative bisymmetric matrix $Q_1$ with all eigenvalues obtained from all the numbers in $\lbrace\lambda_0+\lambda_1+T_1\rbrace, \Lambda_2, \Lambda_3, \ldots, \Lambda_S.$ Since $-T_1+T_1=0$, by Theorem \ref{impsul} again, there is a nonnegative bisymmetric matrix $Q_2$ with all eigenvalues obtained from all numbers in $\lbrace\ -T_1 \rbrace$ and $\Lambda_1.$ If $\lambda_0 + T_1 \geq -(\lambda_1+T_1)$, then applying Lemma \ref{lf3} with $\varepsilon = -(\lambda_1+T_1)$, (or $\varepsilon = \lambda_0+T_1$), we get the desired matrix.

Now, let $M \geq 2$ and suppose the assertion is true for all system of $\lambda$'s satisfying the assumption of the assertion with the length less than $M$. If there is $j$ such that $1 \leq j \leq M$ and $\lambda_j+T_j \geq 0$, by Theorem \ref{impsul}, there is a nonnegative bisymmetric matrix
$\begin{pmatrix}
A & JCJ \\ C & JAJ
\end{pmatrix}$
with eigenvalues obtained from all numbers in $\lbrace \lambda_j \rbrace$ and $\Lambda_j.$ Note that the system
$$\lambda_0, \lambda_1, \ldots \lambda_{j-1},\lambda_{j+1}, \ldots, \lambda_M, T_S, \ldots, T_{j+1},T_{j-1}, \ldots,T_1$$
satisfies (1) and (2). By the induction hypothesis, there is a nonnegative bisymmetric matrix $Q$ with all eigenvalues obtained from all numbers in

$$\lbrace\lambda_0, \lambda_1, \ldots,\lambda_{j-1}, \lambda_{j+1}, \ldots, \lambda_n \rbrace, \Lambda_1,\Lambda_2, \ldots \Lambda_{j-1}, \Lambda_{j+1}, \ldots \Lambda_S.$$
\\
Therefore the matrix $\begin{pmatrix}
A & & JCJ \\
  &Q&     \\
C & & JAJ
\end{pmatrix}$ solve the problem.

Let $\lambda_i+T_i < 0$, for all $i=1,2,\ldots,M.$ Then it's easy to check that the system $\lambda_0+\lambda_1+T_1, \lambda_2, \ldots,\lambda_M, T_S, \ldots,T_2$ satisfies the assumptions (1) and (2). Therefore, by the induction hypothesis, there is a nonnegative bisymmetric matrix $Q_1$ with all eigenvalues obtained from all numbers in $\lbrace\lambda_0+\lambda_1+T_1\rbrace, \Lambda_2, \ldots \Lambda_S$ and $\lambda_0+\lambda_1+T_1 \geq \lambda_2.$ Since $-T_1+T_1=0$, by Theorem \ref{impsul} again, there is a nonnegative bisymmetric matrix $Q_2$ with all eigenvalues obtained from all numbers in $\lbrace\ -T_1 \rbrace$ and $\Lambda_1.$ If $\lambda_0 + T_1 \geq -(\lambda_1+T_1)$, then we apply Lemma \ref{lf3} with $\varepsilon = -(\lambda_1+T_1)$, (or $\varepsilon = \lambda_0+T_1$). Then we solve the problem.
\end{proof}
\end{thm}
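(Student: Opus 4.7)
The plan is to prove Theorem~\ref{impboro} by induction on $M$, in the same spirit as Borobia's proof of Theorem~\ref{borobia}, but replacing each ``merge two columns'' step by an appropriate bisymmetric construction. The three basic building blocks are: (i) Theorem~\ref{impsul} to realize bisymmetrically any list of the form $\{\mu\}\cup\Lambda_j$ with $\mu+T_j\ge 0$; (ii) the sandwich embedding $\left(\begin{smallmatrix} A & & JCJ\\ & Q & \\ C & & JAJ\end{smallmatrix}\right)$, which is bisymmetric whenever $\left(\begin{smallmatrix} A & JCJ\\ C & JAJ\end{smallmatrix}\right)$ is a bisymmetric matrix of \emph{even} order and $Q$ is bisymmetric, and whose eigenvalues are the union of the two spectra; and (iii) Lemma~\ref{lf3}, which glues two bisymmetric matrices while shifting two Perron-type eigenvalues in opposite directions, provided one of the two pieces has odd order (equivalently, the associated list has even length).

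For the base cases I handle $M=0$ directly via Theorem~\ref{impsul}, and for $M=1$ I split on whether $S=1$ or $S>1$ and whether $1\in K$, in each case realizing either $\{\lambda_0\}\cup\Lambda_2\cup\cdots\cup\Lambda_S$ or $\{\lambda_0+\lambda_1+T_1\}\cup\Lambda_2\cup\cdots\cup\Lambda_S$ via Theorem~\ref{impsul}, realizing $\{\lambda_1\}\cup\Lambda_1$ (or $\{-T_1\}\cup\Lambda_1$) via Theorem~\ref{impsul}, and combining them either by the sandwich trick or by invoking Lemma~\ref{lf3} with $\varepsilon=-(\lambda_1+T_1)$. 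The oddness of $|\Lambda_1|$ makes $\{\lambda_1\}\cup\Lambda_1$ have even length, so the bisymmetric matrix realizing it has even order, which is exactly the hypothesis needed to apply Lemma~\ref{lf3} or the sandwich embedding.

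For the inductive step $M\ge 2$, I distinguish two cases. If there exists $j\in\{1,\ldots,M\}$ with $\lambda_j+T_j\ge 0$, I use Theorem~\ref{impsul} to produce a bisymmetric realization of $\{\lambda_j\}\cup\Lambda_j$; since $|\Lambda_j|$ is odd, this realization is of even order $1+|\Lambda_j|$, so it decomposes as $\left(\begin{smallmatrix}A&JCJ\\C&JAJ\end{smallmatrix}\right)$. I then check that the reduced list
\[\lambda_0,\lambda_1,\ldots,\widehat{\lambda_j},\ldots,\lambda_M,\ T_S,\ldots,\widehat{T_j},\ldots,T_1\]
(with $M$ replaced by $M-1$, $S$ by $S-1$) still satisfies hypotheses (1) and (2) with the corresponding index set $K\setminus\{j\}$, apply the inductive hypothesis to obtain a bisymmetric matrix $Q$ realizing it, and sandwich $Q$ inside the even-order bisymmetric block. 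If instead $\lambda_i+T_i<0$ for every $i\in\{1,\ldots,M\}$, I merge $\lambda_0$ and $\Lambda_1$: I verify that the shorter list $\lambda_0+\lambda_1+T_1,\lambda_2,\ldots,\lambda_M,T_S,\ldots,T_2$ still satisfies (1) and (2), apply the inductive hypothesis to get a bisymmetric $Q_1$, apply Theorem~\ref{impsul} to the nonnegatively-summing list $\{-T_1\}\cup\Lambda_1$ to get a bisymmetric $Q_2$ of even order, and invoke Lemma~\ref{lf3} with $\varepsilon=-(\lambda_1+T_1)$ (or $\varepsilon=\lambda_0+T_1$, whichever is smaller in absolute value so that the Perron comparison $\alpha_0\ge\beta_0$ holds) to obtain the desired matrix with spectrum $\lambda_0,\lambda_1,\ldots,\lambda_n$.

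The main technical obstacle will be the bookkeeping in the inductive step: I must verify, in each branch, that the Borobia-type inequalities (1) and (2) are inherited by the reduced list and its new index set $K$, and I must check in the merging branch that $\lambda_0+\lambda_1+T_1\ge\lambda_2$ and that the hypothesis $\alpha_0\ge\beta_0$ of Lemma~\ref{lf3} is met, so that the $\varepsilon$-shift is legitimate. The oddness hypothesis on $|\Lambda_j|$ for $j\le M$ is used precisely to guarantee that every intermediate bisymmetric realization of $\{\mu\}\cup\Lambda_j$ has even order, which is both what the sandwich embedding requires and what makes the dimensional parity hypothesis of Lemma~\ref{lf3} automatic at every step.
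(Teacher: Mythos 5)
Your plan follows the paper's own proof essentially step for step: induction on $M$, the same base cases for $M\le 1$ split by $S=1$ vs.\ $S>1$ and $1\in K$ vs.\ $1\notin K$, and the same two branches in the inductive step (sandwiching an even-order bisymmetric realization of $\{\lambda_j\}\cup\Lambda_j$ around the inductively obtained matrix when some $\lambda_j+T_j\ge 0$, and otherwise merging $\lambda_0,\lambda_1,\Lambda_1$ and applying Lemma~\ref{lf3} with $\varepsilon=-(\lambda_1+T_1)$ or $\varepsilon=\lambda_0+T_1$), using Theorem~\ref{impsul} as the basic realization tool exactly as the paper does. The bookkeeping you defer (inheritance of (1)--(2) by the reduced lists, $\lambda_0+\lambda_1+T_1\ge\lambda_2$, and the Perron comparison in Lemma~\ref{lf3}) is the same verification the paper carries out, so your proposal is correct and essentially identical in approach.
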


\bigskip

\begin{thm} \label{impboro2}
Let $\lambda_0 \geq \lambda_1 \geq \ldots \geq \lambda_M \geq 0 > \lambda_{M+1} \geq \ldots \geq \lambda_n$ be real numbers. If there exists a partition $\Lambda=\Lambda_1 \cup \Lambda_2 \cup \ldots \cup \Lambda_{M-1}$ of $\lbrace\lambda_{M+1},\lambda_{M+2},\ldots,\lambda_n\rbrace$ where $|\Lambda_j|$ is odd for $j=1,2,\ldots,M-1,$ and $\displaystyle T_k =\sum_{\lambda_i \in \Lambda_k} \lambda_{i}$, $T_{M-1} \geq \ldots \geq T_1$ satisfying

\qquad (1) $\displaystyle \lambda_0 + \sum_{i\in K, i<k} (\lambda_i + T_i)+ T_k \geq 0$ \ \ \ , for all $k \in K$,

\qquad (2) $\displaystyle \lambda_0 + \sum_{i\in K} (\lambda_i + T_i)  \geq 0,$
\\where $K=\lbrace i \in \lbrace 1,2,\ldots,M-1\rbrace \mid \lambda_i + T_i < 0\rbrace$, then there is a nonnegative bisymmetric matrix with all eigenvalues as $\lambda_0,\lambda_1,\ldots,\lambda_n.$

\begin{proof}
We prove by induction on $M$.

If $M=0$ or $1$ then the assertion is clear because there is no partition $\Lambda$ of negative numbers.

If $M=2$ then $\Lambda_1$=$\lbrace\lambda_{3},\lambda_{4}, ldots,\lambda_n\rbrace$. If $1 \notin K$ then $\lambda_1 + T_1 \geq 0$ implies $\lambda_0 + T_1 \geq 0$. Similarly, if $1 \in K$ then $\lambda_0 + T_1 \geq 0$ by condition (1). Thus
$\displaystyle 0 \leq \lambda_0+T_1 = \lambda_0 + \sum_{i=3}^n \lambda_i.$ By Theorem \ref{impsul}, there is a nonnegative bisymmetric matrix $Q$ with all eigenvalues $\lambda_0, \lambda_3, \lambda_4,\ldots, \lambda_n.$ So, the matrix
$$\begin{pmatrix}
\frac{\lambda_1+\lambda_2}{2}&&\frac{\lambda_1-\lambda_2}{2} \\
&Q& \\
\frac{\lambda_1-\lambda_2}{2}&&\frac{\lambda_1+\lambda_2}{2}
\end{pmatrix}$$
is our desired matrix.

Now, let $M \geq 3$ and suppose the assertion is true for all system of $\lambda$'s  satisfying the assumption of the assertion with the length less than $M$. The proof of this step is similar to the one of Theorem \ref{impboro}, so we omit the proof.
\end{proof}
\end{thm}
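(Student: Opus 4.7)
The plan is to carry out the induction on $M$ announced in the statement: the cases $M = 0, 1, 2$ are already settled in the displayed portion of the proof, so fix $M \geq 3$ and assume the theorem for all valid systems with strictly fewer nonnegative eigenvalues. The argument mirrors the inductive step of Theorem \ref{impboro}, adapted to the configuration in which there is one fewer partition class than there are nonnegative eigenvalues beyond the Perron root. I split into two cases according to whether or not some $j \in \{1, \ldots, M-1\}$ satisfies $\lambda_j + T_j \geq 0$.

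Suppose first that some such $j$ exists. Because $|\Lambda_j|$ is odd, the list $\{\lambda_j\} \cup \Lambda_j$ has even length and nonnegative sum, so Theorem \ref{impsul} produces a nonnegative bisymmetric matrix of even order with this spectrum, necessarily of the form $\begin{pmatrix} A & JCJ \\ C & JAJ \end{pmatrix}$ by Theorem \ref{canthm1}. Deleting $\lambda_j$ from the nonnegatives and $\Lambda_j$ from the partition leaves the list $\lambda_0, \ldots, \lambda_{j-1}, \lambda_{j+1}, \ldots, \lambda_n$ partitioned by $\Lambda_1, \ldots, \Lambda_{j-1}, \Lambda_{j+1}, \ldots, \Lambda_{M-1}$. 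Because $j \notin K$, the set $K$ and every partial sum appearing in (1) and (2) are unchanged, so the reduced system still satisfies the hypotheses of the theorem, now with $M - 1$ nonnegative eigenvalues beyond $\lambda_0$ and $M - 2$ partition classes. The induction hypothesis gives a nonnegative bisymmetric matrix $Q$ realizing the reduced spectrum, and the block arrangement
$$\begin{pmatrix} A & & JCJ \\ & Q & \\ C & & JAJ \end{pmatrix}$$
is the desired bisymmetric nonnegative matrix.

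Now suppose $\lambda_i + T_i < 0$ for every $i \in \{1, \ldots, M-1\}$, so $K = \{1, \ldots, M-1\}$. I pass to the reduced system $\lambda_0 + \lambda_1 + T_1, \lambda_2, \ldots, \lambda_M$ partitioned by $\Lambda_2, \ldots, \Lambda_{M-1}$. Condition (1) with $k = 1$ gives $\lambda_0 + T_1 \geq 0$, hence $\lambda_0 + \lambda_1 + T_1 \geq \lambda_1 \geq \lambda_2$, and a direct translation shows that conditions (1) and (2) for the new system are precisely the restrictions of the original conditions to indices $2, \ldots, M-1$. The induction hypothesis produces a nonnegative bisymmetric matrix $Q_1$ for this spectrum. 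Separately, since $(-T_1) + T_1 = 0$, Theorem \ref{impsul} applied to $\{-T_1\} \cup \Lambda_1$ yields a nonnegative bisymmetric matrix $Q_2$ of even order $|\Lambda_1| + 1$. To splice $Q_1$ and $Q_2$ I invoke Lemma \ref{lf3}: if $\lambda_0 + \lambda_1 + 2T_1 \geq 0$ the Perron root $\lambda_0 + \lambda_1 + T_1$ of $Q_1$ dominates the Perron root $-T_1$ of $Q_2$, and I take $\varepsilon = -(\lambda_1 + T_1) > 0$; otherwise I interchange the roles of $Q_1$ and $Q_2$ and take $\varepsilon = \lambda_0 + T_1 \geq 0$. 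In either case the new Perron pair becomes $(\lambda_0, \lambda_1)$ while the remaining spectrum is preserved, as required. The oddness hypothesis of Lemma \ref{lf3} is automatic because $|\Lambda_1|$ is odd, so $Q_2$ has even order.

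The main obstacle is the careful bookkeeping needed to confirm that conditions (1) and (2) transfer cleanly under both reductions, especially the ``collapse $\lambda_0, \lambda_1, T_1$ into $\lambda_0 + \lambda_1 + T_1$ and drop $\Lambda_1$'' step in the second case; once the reindexing of sums over $K$ is tracked correctly the verifications are routine, and the rest of the argument is precisely the inductive grafting already used in Theorem \ref{impboro}.
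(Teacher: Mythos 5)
Your proof is correct and takes essentially the same approach as the paper: the base cases coincide with the paper's displayed argument, and your inductive step is precisely the adaptation of the inductive step of Theorem \ref{impboro} that the paper invokes but omits. The details you supply (transfer of conditions (1)--(2) under both reductions, the inequality $\lambda_0+\lambda_1+T_1\geq\lambda_2$ via condition (1) with $k=1$, and the parity check needed for Lemma \ref{lf3}) are exactly what that omitted step requires.
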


\bigskip

\begin{thm} \label{impboro3}
Let $\lambda_0 \geq \lambda_1 \geq \ldots \geq \lambda_M \geq 0 > \lambda_{M+1} \geq \ldots \geq \lambda_n$ be real numbers. If there exists a partition $\Lambda=\Lambda_1 \cup \Lambda_2 \cup \ldots \cup \Lambda_S$ of $\lbrace\lambda_{M+1},\lambda_{M+2},\ldots,\lambda_n\rbrace$, with $M > S$, $|\Lambda_j|$ is odd for $j=1, 2,\ldots,S,$ and $\displaystyle T_k =\sum_{\lambda_i \in J_k} \lambda_{i}$, $T_S \geq T_{S-1} \geq \ldots \geq T_1$ satisfying

\qquad (1) $\displaystyle \lambda_0 + \sum_{i\in K, i<k} (\lambda_i + T_i)+ T_k \geq 0$, for all $k \in K$, and

\qquad (2) $\displaystyle \lambda_0 + \sum_{i\in K} (\lambda_i + T_i) \geq 0,$
\\where $K=\lbrace i \in \lbrace 1,2, \ldots,S \rbrace \mid \lambda_i + T_i < 0\rbrace$, then there is a nonnegative bisymmetric matrix with all eigenvalues as $\lambda_0,\lambda_1,\ldots,\lambda_n.$

\begin{proof}
If $M-S$ is even then we reduce the system by omitting $\lambda_{S+1},\ldots, \lambda_M.$ The new system satisfies the condition in Theorem \ref{impboro}. Then there is a nonnegative bisymmetric matrix $Q$ with all eigenvalues obtained from all numbers in $\lbrace \lambda_0, \lambda_1,\ldots, \lambda_S \rbrace$, $\Lambda_1, \ldots \Lambda_S.$ Since $\lambda_{S+1} \geq \ldots \geq \lambda_M \geq 0$, we have the matrix
$$\begin{pmatrix}
\frac{\lambda_{M-1}+\lambda_M}{2} & &&&&&\frac{\lambda_{M-1}-\lambda_M}{2}\\
&\ddots&&&&\reflectbox{$\ddots$}& \\
&&\frac{\lambda_{S+1}+\lambda_{S+2}}{2}&&\frac{\lambda_{S+1}-\lambda_{S+2}}{2}\\
&&&Q&&&&\\
&&\frac{\lambda_{S+1}-\lambda_{S+2}}{2}&&\frac{\lambda_{S+1}+\lambda_{S+2}}{2}\\
&\reflectbox{$\ddots$}&&&&\ddots&& \\
\frac{\lambda_{M-1}-\lambda_M}{2} & &&&&&\frac{\lambda_{M-1}+\lambda_M}{2}\\
\end{pmatrix}$$
is our solution.

If $M-S$ is odd then we reduce the system by omitting $\lambda_{S+2}, \ldots,\lambda_M.$ The new system satisfies the condition in Theorem \ref{impboro2}. Then there is a nonnegative bisymmetric matrix $Q$ with all eigenvalues obtained from all numbers in $\lbrace \lambda_0, \lambda_1,\ldots,$ $\lambda_{S+1} \rbrace$, $\Lambda_1, \ldots \Lambda_S.$ Since $\lambda_{S+2} \geq \ldots \geq \lambda_M \geq 0$, we have the matrix
$$\begin{pmatrix}
\frac{\lambda_{M-1}+\lambda_M}{2} &&&&&&\frac{\lambda_{M-1}-\lambda_M}{2}\\
&\ddots&&&&\reflectbox{$\ddots$}& \\
&&\frac{\lambda_{S+2}+\lambda_{S+3}}{2}&&\frac{\lambda_{S+2}-\lambda_{S+3}}{2}&&\\
&&&Q&&&\\
&&\frac{\lambda_{S+2}-\lambda_{S+3}}{2}&&\frac{\lambda_{S+2}+\lambda_{S+3}}{2}&&\\
&\reflectbox{$\ddots$}&&&&\ddots& \\
\frac{\lambda_{M-1}-\lambda_M}{2}&&&&&&\frac{\lambda_{M-1}+\lambda_M}{2}\\
\end{pmatrix}$$
is our solution. This completes the proof.
\end{proof}
\end{thm}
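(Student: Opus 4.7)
The plan is to split into two cases according to the parity of $M-S$, the number of nonnegative eigenvalues that are left over once the $S$ partition blocks $\Lambda_1,\ldots,\Lambda_S$ have been paired with $\lambda_1,\ldots,\lambda_S$. In each case, I would first discard the ``extra'' nonnegatives, apply one of the previously established Theorems~\ref{impboro} or~\ref{impboro2} to the reduced system, and then reinsert the discarded values by wrapping the resulting bisymmetric matrix with $2\times 2$ nonnegative bisymmetric shells.

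The wrapping device is the following: for $\mu\geq \nu\geq 0$, the matrix $\bigl(\begin{smallmatrix}(\mu+\nu)/2 & (\mu-\nu)/2\\ (\mu-\nu)/2 & (\mu+\nu)/2\end{smallmatrix}\bigr)$ from the proof of Theorem~\ref{small} is nonnegative bisymmetric with eigenvalues $\mu,\nu$. Given a nonnegative bisymmetric $n\times n$ matrix $Q$, placing its diagonal entries at positions $(1,1)$ and $(n+2,n+2)$ and its off-diagonal entry at positions $(1,n+2)$ and $(n+2,1)$ (with zeros elsewhere in the first and last row/column) produces an $(n+2)\times(n+2)$ nonnegative bisymmetric matrix whose spectrum is that of $Q$ together with $\mu$ and $\nu$; this is exactly the embedding observed in the paragraph preceding Theorem~\ref{small}. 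Iterating preserves bisymmetry at every stage.

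In Case~1 ($M-S$ even), I drop $\lambda_{S+1},\ldots,\lambda_M$ and keep the partition unchanged. The reduced system has exactly $S$ nonnegative entries and $S$ partition blocks, so the hypothesis $M'\leq S'$ of Theorem~\ref{impboro} is met. Conditions (1) and (2) of the present statement are identical to conditions (1) and (2) of Theorem~\ref{impboro} applied to the reduced system, since the tail sum $\sum_{j=S+1}^{S} T_j$ is empty. Theorem~\ref{impboro} then yields a nonnegative bisymmetric $Q$ realizing the reduced spectrum, and wrapping $Q$ successively with $(M-S)/2$ shells realizing the pairs $(\lambda_{S+1},\lambda_{S+2}),\ldots,(\lambda_{M-1},\lambda_M)$ produces the desired matrix. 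In Case~2 ($M-S$ odd), I instead drop only $\lambda_{S+2},\ldots,\lambda_M$, retaining $\lambda_{S+1}$, so the reduced system has $S+1$ nonnegatives and $S$ blocks; this is precisely the shape handled by Theorem~\ref{impboro2} (its ``$M-1$'' equals $S$). Conditions (1) and (2) match directly. After producing $Q$ from Theorem~\ref{impboro2}, I wrap with $(M-S-1)/2$ shells realizing the pairs $(\lambda_{S+2},\lambda_{S+3}),\ldots,(\lambda_{M-1},\lambda_M)$.

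The only real obstacle I anticipate is the bookkeeping check that conditions (1) and (2) of Theorem~\ref{impboro3} genuinely translate to the conditions of Theorems~\ref{impboro} and~\ref{impboro2} after the reduction, in particular that the index set $K$ is the same. Since $\lambda_{S+1},\ldots,\lambda_M\geq 0$ do not appear in the inequalities (which only involve $i\in K\subseteq\{1,\ldots,S\}$), this is immediate. The shell step requires only that each pair $\lambda_{S+j}+\lambda_{S+j+1}\geq 0$, which is automatic since both entries are nonnegative. Hence the two cases together complete the proof.
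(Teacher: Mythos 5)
Your proposal is correct and follows essentially the same route as the paper: reduce by discarding the surplus nonnegative eigenvalues according to the parity of $M-S$, invoke Theorem~\ref{impboro} (even case) or Theorem~\ref{impboro2} (odd case) on the reduced list, and then reinsert the discarded values via nested $2\times 2$ nonnegative bisymmetric blocks, which is exactly the bordered matrix the paper exhibits.
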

\bigskip

\begin{thm} \label{boro4}
Let $\lambda_0 \geq \lambda_1 \geq \ldots \geq \lambda_M \geq 0 > \lambda_{M+1} \geq \ldots \geq \lambda_n$ be real numbers. If there exists a partition $\Lambda=\Lambda_1 \cup \Lambda_2 \cup \ldots \cup \Lambda_S$ of $\lbrace\lambda_{M+1},\lambda_{M+2},\ldots,\lambda_n\rbrace$ with $|\Lambda_j|$ is odd for $j=1,2,\ldots, \emph{min} \lbrace M, S \rbrace,$ and $\displaystyle T_k =\sum_{\lambda_i \in \Lambda_k} \lambda_{i}$, $T_S \geq T_{S-1} \geq \ldots \geq T_1$ satisfying

\qquad (1) $\displaystyle \lambda_0 + \sum_{i\in K, i<k} (\lambda_i + T_i)+ T_k \geq 0$, for all $k \in K$, and

\qquad (2) $\displaystyle \lambda_0 + \sum_{i\in K} (\lambda_i + T_i)+ \sum_{j=M+1}^S T_j  \geq 0,$
\\where $K=\lbrace i \in \lbrace 1,2,\ldots, \emph{min} \lbrace M,S \rbrace \rbrace \mid \lambda_i + T_i < 0\rbrace$, then there is a nonnegative bisymmetric matrix with all eigenvalues as $\lambda_0,\lambda_1,\ldots,\lambda_n.$
\end{thm}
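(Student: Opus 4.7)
The statement of Theorem \ref{boro4} is, in effect, a unification of Theorems \ref{impboro} and \ref{impboro3} (with Theorem \ref{impboro2} already absorbed into the proof of Theorem \ref{impboro3}). The natural plan is therefore to split on whether $M \leq S$ or $M > S$ and verify in each case that the hypotheses of Theorem \ref{boro4} translate directly into the hypotheses of the appropriate earlier theorem.

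First I would handle the case $M \leq S$. Here $\min\{M,S\} = M$, so the oddness requirement ``$|\Lambda_j|$ odd for $j=1,\ldots,\min\{M,S\}$'' coincides with ``$|\Lambda_j|$ odd for $j=1,\ldots,M$,'' and the index set $K = \{i \in \{1,\ldots,M\} : \lambda_i + T_i < 0\}$ is exactly the set appearing in Theorem \ref{impboro}. Conditions (1) and (2) of Theorem \ref{boro4} are then literally identical to conditions (1) and (2) of Theorem \ref{impboro}, so an application of that theorem yields the desired $(n+1)\times(n+1)$ nonnegative bisymmetric realization.

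Next I would handle the case $M > S$. Now $\min\{M,S\} = S$, so the oddness assumption reads ``$|\Lambda_j|$ odd for $j=1,\ldots,S$'' and $K \subseteq \{1,\ldots,S\}$, matching Theorem \ref{impboro3}. Because $M+1 > S$, the index set $\{M+1,\ldots,S\}$ is empty, so $\sum_{j=M+1}^{S} T_j = 0$ and condition (2) reduces to $\lambda_0 + \sum_{i\in K}(\lambda_i + T_i) \geq 0$, which is precisely the second hypothesis of Theorem \ref{impboro3}. Condition (1) is unchanged. Invoking Theorem \ref{impboro3} (which internally splits on the parity of $M-S$ and calls on Theorem \ref{impboro2} when $M-S$ is odd) then produces the required matrix and completes the proof.

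There is essentially no obstacle here beyond careful bookkeeping: all the constructive work — the use of Lemma \ref{lep}, Corollary \ref{cor1}, Lemma \ref{lf3}, and the inductive arguments that weave them together — has already been carried out in the three earlier theorems. The only thing to check is that ``$\min\{M,S\}$'' and the empty/non-empty status of the tail sum $\sum_{j=M+1}^{S} T_j$ make the two regimes fit together seamlessly, which they do. Consequently the proof can be written very short, simply verifying the reduction in each case without any new induction.
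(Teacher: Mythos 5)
Your proposal is correct and is essentially the paper's own proof: the authors dispose of Theorem \ref{boro4} in one line by citing Theorem \ref{impboro} (case $M\leq S$) and Theorem \ref{impboro3} (case $M>S$), exactly the case split and bookkeeping you describe, including the observation that the tail sum $\sum_{j=M+1}^{S}T_j$ vanishes when $M>S$.
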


\begin{proof}
The theorem follows immediately from Theorem \ref{impboro} and Theorem \ref{impboro3} .
\end{proof}

\medskip

The statement of the next theorem is adapted from Theorem \ref{soto4} and it is a variance of Theorem \ref{julio1} and Theorem \ref{julio2}. The sufficient condition in the next theorem and the sufficient condition in Theorem \ref{julio1} and Theorem \ref{julio2} are very similar but they are different in terms of the requirements of the nonnegative matrices in condition (1) and (2).

\bigskip

\begin{thm} \label{imsoto3} 
Let $\lambda_1 \geq \lambda_1 \geq \ldots \geq \lambda_n$ be real numbers and let $\omega_1, \ldots, \omega_S$ be nonnegative numbers, where $S \leq n$ and $ 0 \leq \omega_k \leq \lambda_1, i=1, \ldots, S.$ Suppose that

\qquad (1) there is a partition $\Lambda_1 \cup \ldots \cup \Lambda_S$ of $\lbrace \lambda_1, \ldots, \lambda_n \rbrace$, in which there is a $\Lambda_i$ of odd size at most one set,  $\Lambda_j=\lbrace \lambda_{j1}, \lambda_{j2} \ldots \lambda_{jp_j} \rbrace$, $\lambda_{jk} \geq \lambda_{j(k+1)}$, $\lambda_{j1} \geq 0$ , and $\lambda_{11}=\lambda_1$, such that for each $j=1, \ldots, S$, the set $\Gamma_j= \lbrace \omega_j, \lambda_{j2}, \ldots, \lambda_{jp_j} \rbrace$ is realizable by a nonnegative bisymmetric matrix with the Perron root $\omega_j$, and

\qquad (2) there is an $S \times S$ nonnegative symmetric matrix $B$ with all eigenvalues as $\lambda_{11}, \lambda_{21}, \ldots, \lambda_{S1}$ and diagonal entries $\omega_1, \omega_2, \ldots, \omega_S.$ \\
Then $\lbrace \lambda_1, \lambda_2, \ldots, \lambda_n \rbrace$ is realizable by nonnegative bisymmetric matrix.

\begin{proof}
First, we consider in the case that $\Lambda_j$ is of even size for all $j=1, 2, \ldots, S$. Let for each $j$, $\Gamma_j$ is realizable by a nonnegative bisymmetric matrix $Q_j$ of size even. By Theorem \ref{canthm1}, we can write $Q_j=\begin{pmatrix}
A_j & JC_jJ \\
C_j & JA_jJ
\end{pmatrix}$
, where $A_j$ and $C_j$ are $\frac{p_j}{2} \times \frac{p_j}{2}$ matrices, $A_j=A_j^T$ and $C_j^T=JC_jJ$. Then $$\widehat{Q} = \begin{pmatrix}
A_S    &                       &      &       &                      &  JC_SJ\\
       & \ddots                &      &       &\reflectbox{$\ddots$} &       \\
       &                       &  A_1 & JC_1J &                      &       \\
       &                       &  C_1 & JA_1J &                      &       \\
       & \reflectbox{$\ddots$} &      &       &\ddots                &       \\
C_S    &                       &      &       &                      &  JA_SJ\\
\end{pmatrix} \qquad (\ast\ast)$$
is a nonnegative bisymmetric matrix with all eigenvalues obtained from all numbers in $\Gamma_j$, for $j=1, \ldots, S$.
By Lemma \ref{canlemma2}, for each $j=1, \ldots, S$ we can find the unit nonngative eigenvector of $Q_j$ corresponding to $\omega_j$ in the form $\begin{pmatrix}
v_j \\ Jv_j
\end{pmatrix}$.
Then
$$x_1 = \begin{pmatrix} 0\\ \vdots \\0 \\v_1 \\Jv_1 \\0 \\ \vdots\\0 \end{pmatrix},
 x_2 = \begin{pmatrix} 0\\ \vdots \\v_2 \\0 \\ 0 \\Jv_2 \\ \vdots\\0 \end{pmatrix},\ldots,
 x_S =\begin{pmatrix} v_S\\ 0 \\ \vdots \\0 \\ 0 \\ \vdots \\ 0 \\ Jv_S \end{pmatrix}$$
form an orthonormal set of eigenvectors of $\widehat{Q}$ corresponding to $\omega_1, \ldots, \omega_S$, respectively. Obviously, the $\left(\displaystyle  \sum_{j=1}^S p_j\right) \times S$ matrix $X=\begin{pmatrix}
x_S & x_{S-1}& \cdots& x_1
\end{pmatrix}$ is a nonnegative matrix with $JX=X$, where $J$ is the $\displaystyle \left(\sum_{j=1}^S p_j\right) \times \left(\sum_{j=1}^S p_j\right) $ reverse identity matrix, and $\widehat{Q}X=X\Omega$, where $\Omega$ is diag $(\omega_S, \omega_{S-1}, \ldots, \omega_1)$. Therefore $\widehat{Q}+X(B-\Omega)X^T$ is a nonnegative matrix with eigenvalues $\lambda_1, \lambda_2, \ldots, \lambda_n$ by Theorem \ref{sotos}. Moreover, $\widehat{Q}+X(B-\Omega)X^T$ is a bisymmetric matrix because $\widehat{Q}$ is a bisymmetric and $JX=X$.

If there is a $p$ such that $0 \leq p \leq S$, $\vert \Lambda_p \vert$ is odd number and $\Gamma_p$ is realizable by nonnegative bisymmetric matrix $Q_p$
then we set $Q_p$ in the center of the matrix $\widehat{Q}$ in ($\ast \ast$) and the construction follows from the previous case.
\end{proof}
\end{thm}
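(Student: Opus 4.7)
The plan is to mimic the proof of Soto's Theorem \ref{soto4}, but to execute the block assembly so that bisymmetry is preserved at every stage. The two inputs I will combine are the small bisymmetric realizations of each $\Gamma_j$ (from hypothesis (1)) and the $S\times S$ symmetric matrix $B$ (from hypothesis (2)); the perturbation tool is the symmetric Rado-type result, Theorem \ref{sotos}. I split into two cases depending on whether the partition has any part of odd cardinality; by hypothesis at most one part does, call it $\Lambda_p$.

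\textbf{Case 1: every $\Lambda_j$ has even size.} For each $j$, let $Q_j$ be a nonnegative bisymmetric matrix realizing $\Gamma_j$ with Perron root $\omega_j$. Using Theorem \ref{canthm1} write $Q_j=\begin{pmatrix} A_j & JC_jJ \\ C_j & JA_jJ\end{pmatrix}$, and nest them symmetrically to form the block matrix $\widehat{Q}$ displayed in $(\ast\ast)$. Because the blocks are laid out so that the pair $(A_j,JC_jJ)$ in the top half is mirrored by $(C_j,JA_jJ)$ in the bottom half, the resulting $\widehat{Q}$ is itself nonnegative and bisymmetric. Its spectrum is the union of the spectra of the $Q_j$'s.

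\textbf{Assembling the eigenvector matrix and applying Rado.} By Lemma \ref{canlemma2} each $Q_j$ admits a nonnegative unit $\omega_j$-eigenvector of the form $\begin{pmatrix} v_j \\ Jv_j\end{pmatrix}$. Embedding these into the ambient space with zeros in the other block positions, in the symmetric pattern shown in the excerpt, yields orthonormal eigenvectors $x_1,\dots,x_S$ of $\widehat{Q}$. Let $X$ be the matrix whose columns are $x_S,x_{S-1},\dots,x_1$; then $X\ge 0$, $\widehat{Q}X=X\Omega$ with $\Omega=\mathrm{diag}(\omega_S,\dots,\omega_1)$, and crucially $JX=X$ because each eigenvector was placed symmetrically about the centre. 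Now apply Theorem \ref{sotos} with the symmetric perturbation $C:=B-\Omega$: the matrix $\widehat{Q}+X(B-\Omega)X^T$ has spectrum $(\text{eigenvalues of }\Omega+C)\cup(\text{remaining eigenvalues of }\widehat{Q})$, which is exactly $\{\lambda_1,\dots,\lambda_n\}$ since $\Omega+C=B$ has eigenvalues $\lambda_{11},\dots,\lambda_{S1}$.

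It remains to verify the two structural properties of the resulting matrix. For nonnegativity: $B$ is nonnegative with diagonal equal to $\Omega$, so $B-\Omega$ is entrywise nonnegative; together with $X\ge 0$ and $\widehat{Q}\ge 0$ this gives $\widehat{Q}+X(B-\Omega)X^T\ge 0$. For bisymmetry: $\widehat{Q}$ is bisymmetric; $X(B-\Omega)X^T$ is symmetric since $B-\Omega$ is symmetric, and it is $J$-invariant because $JX=X$ forces $J\bigl(X(B-\Omega)X^T\bigr)J = X(B-\Omega)(JX)^T = X(B-\Omega)X^T$. Hence the sum is bisymmetric, completing Case~1.

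\textbf{Case 2: exactly one part $\Lambda_p$ has odd size.} I place the odd-sized $Q_p$ (necessarily odd-dimensional bisymmetric, of the form in Theorem \ref{canthm1}(2)) as the central block of $\widehat{Q}$, wrapping the remaining even-sized $Q_j$'s around it in the same mirrored fashion as in Case~1. By Lemma \ref{canlemma2}, $Q_p$ has a nonnegative Perron eigenvector of the centre-symmetric form $\begin{pmatrix} v_p \\ c \\ Jv_p\end{pmatrix}$, so embedding all $S$ eigenvectors still yields a nonnegative $X$ with $JX=X$, and the argument of Case~1 goes through verbatim. The one point to watch — and the main technical obstacle — is precisely that $JX=X$ and nonnegativity of $X$ must both survive the asymmetric dimensions in the odd case; this is exactly where Lemma \ref{canlemma2} is indispensable, as it delivers the centre-symmetric Perron eigenvector needed for the central block.
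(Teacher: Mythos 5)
Your proposal is correct and follows essentially the same route as the paper: the nested bisymmetric block matrix $\widehat{Q}$, the centrally symmetric Perron eigenvectors from Lemma \ref{canlemma2} giving $JX=X$, the Rado-type perturbation $\widehat{Q}+X(B-\Omega)X^T$ via Theorem \ref{sotos}, and the odd-sized part placed as the central block. Your explicit verification of nonnegativity (via $B-\Omega\ge 0$, up to the harmless reversal aligning the diagonal of $B$ with $\Omega$) and of $J$-invariance of $X(B-\Omega)X^T$ only spells out what the paper leaves implicit.
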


\begin{exa} 
We construct a bisymmetric nonnegative matrix with eigenvalues $9, 2, -1, -2, -3, -4$. We take the partion $\Lambda_1 = \lbrace -2, -3, -4 \rbrace$ and $\Lambda_2=\lbrace -1 \rbrace$ of $\lbrace -1, -2, -3, -4 \rbrace$. Then it satisfies the condition in the Theorem \ref{boro4}. In fact, by Theorem \ref{impsul}, the set $\lbrace 9, -2, -3, -4 \rbrace$ is realizable by the nonnegative bisymmetric matrix
$$A_1=\begin{pmatrix}
0&\sqrt{\frac{15}{2}}&\sqrt{\frac{15}{2}}&3\\
\sqrt{\frac{15}{2}}&0&4&\sqrt{\frac{15}{2}}\\
\sqrt{\frac{15}{2}}&4&0&\sqrt{\frac{15}{2}}\\
3&\sqrt{\frac{15}{2}}&\sqrt{\frac{15}{2}}&0
\end{pmatrix}.$$
Also, the set $\lbrace 2, -1 \rbrace$ is realizable by the nonnegative bisymmetric matrix \\
$\begin{pmatrix}
0.5&1.5\\
1.5&0.5
\end{pmatrix}$.
Then the matrix
$$\begin{pmatrix}
0.5&&&&&1.5\\
&0&\sqrt{\frac{15}{2}}&\sqrt{\frac{15}{2}}&3&\\
&\sqrt{\frac{15}{2}}&0&4&\sqrt{\frac{15}{2}}&\\
&\sqrt{\frac{15}{2}}&4&0&\sqrt{\frac{15}{2}}&\\
&3&\sqrt{\frac{15}{2}}&\sqrt{\frac{15}{2}}&0&\\
1.5&&&&&0.5
\end{pmatrix}$$
is our desired matrix.
\end{exa}
\bigskip

\begin{exa} 
We construct a bisymmetric nonnegative matrix with eigenvalues $9, 5, 1, 1, -4, -4, -8$. Note that this list is not satisfy the condition in Theorem \ref{boro4}. However, it satisfies condition in Theorem \ref{imsoto3} with partition $\Lambda_1=\lbrace 9, -8 \rbrace$ and $\Lambda_2=\lbrace 5, 1, 1, -4, -4 \rbrace.$ In fact, the set $\Gamma_1=\lbrace 8, -8 \rbrace$ and $\Gamma_2=\lbrace 6, 1, 1, -4, -4 \rbrace$ are realizable by the nonnegative bisymmetric matrix
$$A_1=\begin{pmatrix}
0&8\\
8&0
\end{pmatrix}
\text{ and } A_2=\begin{pmatrix}
0&\frac{3+\sqrt{5}}{2}&\frac{3-\sqrt{5}}{2}&\frac{3-\sqrt{5}}{2}&\frac{3+\sqrt{5}}{2}\\
\frac{3+\sqrt{5}}{2}&0&\frac{3+\sqrt{5}}{2}&\frac{3-\sqrt{5}}{2}&\frac{3-\sqrt{5}}{2}\\
\frac{3-\sqrt{5}}{2}&\frac{3+\sqrt{5}}{2}&0&\frac{3+\sqrt{5}}{2}&\frac{3-\sqrt{5}}{2}\\
\frac{3-\sqrt{5}}{2}&\frac{3-\sqrt{5}}{2}&\frac{3+\sqrt{5}}{2}&0&\frac{3+\sqrt{5}}{2}\\
\frac{3+\sqrt{5}}{2}&\frac{3-\sqrt{5}}{2}&\frac{3-\sqrt{5}}{2}&\frac{3+\sqrt{5}}{2}&0
\end{pmatrix},$$
respectively( see \cite{soto2} for the construction of $A_2$). Then

$$\widehat{Q} =\begin{pmatrix}
0&&8\\
&A_2&\\
8&&0
\end{pmatrix}, \Omega=\begin{pmatrix}
8&0\\
0&6
\end{pmatrix} \text{ and }
X=\begin{pmatrix}
0&\frac{\sqrt{2}}{2}\\
\frac{\sqrt{5}}{5}&0\\
\frac{\sqrt{5}}{5}&0\\
\frac{\sqrt{5}}{5}&0\\
\frac{\sqrt{5}}{5}&0\\
\frac{\sqrt{5}}{5}&0\\
0&\frac{\sqrt{2}}{2}
\end{pmatrix}.$$

Now, we find the nonnegative symmetric matrix with eigenvalues $9, 5$ and with diagonal entries $8, 6$. Then the matrix $B=\begin{pmatrix}
8&2\\2&6
\end{pmatrix}$ is required.

Therefore
$$\widehat{Q}+X(B-\Omega) X^T=\begin{pmatrix}
0&\frac{\sqrt{10}}{5}&\frac{\sqrt{10}}{5}&\frac{\sqrt{10}}{5}&\frac{\sqrt{10}}{5}&\frac{\sqrt{10}}{5}&8\\
\frac{\sqrt{10}}{5}&0&\frac{3+\sqrt{5}}{2}&\frac{3-\sqrt{5}}{2}&\frac{3-\sqrt{5}}{2}&\frac{3+\sqrt{5}}{2}&\frac{\sqrt{10}}{5}\\
\frac{\sqrt{10}}{5}&\frac{3+\sqrt{5}}{2}&0&\frac{3+\sqrt{5}}{2}&\frac{3-\sqrt{5}}{2}&\frac{3-\sqrt{5}}{2}&\frac{\sqrt{10}}{5}\\
\frac{\sqrt{10}}{5}&\frac{3-\sqrt{5}}{2}&\frac{3+\sqrt{5}}{2}&0&\frac{3+\sqrt{5}}{2}&\frac{3-\sqrt{5}}{2}&\frac{\sqrt{10}}{5}\\
\frac{\sqrt{10}}{5}&\frac{3-\sqrt{5}}{2}&\frac{3-\sqrt{5}}{2}&\frac{3+\sqrt{5}}{2}&0&\frac{3+\sqrt{5}}{2}&\frac{\sqrt{10}}{5}\\
\frac{\sqrt{10}}{5}&\frac{3+\sqrt{5}}{2}&\frac{3-\sqrt{5}}{2}&\frac{3-\sqrt{5}}{2}&\frac{3+\sqrt{5}}{2}&0&\frac{\sqrt{10}}{5}\\
8&\frac{\sqrt{10}}{5}&\frac{\sqrt{10}}{5}&\frac{\sqrt{10}}{5}&\frac{\sqrt{10}}{5}&\frac{\sqrt{10}}{5}&0
\end{pmatrix}$$
is our desired matrix.
\end{exa}
\bigskip

\section{Sufficient Conditions for the Bisymmetric Positive Eigenvalue Problem}

In Theorem 3.2 of \cite{fdl}, Fiedler showed that, if $A$ is a nonnegative symmetric matrix with all eigenvalues as $\lambda_0 \geq \lambda_1 \geq \ldots \geq \lambda_n$ and $\varepsilon > 0$ is given then there is a positive symmetric matrix $B$ with all eigenvalues as $\lambda_0+\varepsilon, \lambda_1,\ldots, \lambda_n.$ Moreover, in the proof of that theorem, we can find the positive symmetric matrix $R$ such that $B=A+R$. Therefore, we can modify Theorem 3.2 of \cite{fdl} to the following result.
\bigskip

\begin{thm} (Fiedler, \cite{fdl}, 1974) \label{fdl3}
If $A$ is a nonnegative symmetric matrix with all eigenvalues as $\lambda_0 \geq \lambda_1 \geq \ldots \geq \lambda_n$ and $\varepsilon > 0$ is given then there is a positive symmetric matrix $R$ such that $A+R$ has all eigenvalues as $\lambda_0+\varepsilon, \lambda_1, \ldots, \lambda_n.$
\end{thm}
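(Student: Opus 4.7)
The plan is to choose $R$ as a rank-one perturbation in the Perron direction of $A$. Let $v_{0}$ be a unit nonnegative eigenvector of $A$ corresponding to $\lambda_{0}$ (available by Perron-Frobenius), and complete it to an orthonormal eigenbasis $v_{0},v_{1},\ldots,v_{n}$. The matrix $R_{0}:=\varepsilon\,v_{0}v_{0}^{T}$ is symmetric, and since $v_{0}\perp v_{i}$ for each $i\geq 1$, the matrix $A+R_{0}$ has spectrum $\lambda_{0}+\varepsilon,\lambda_{1},\ldots,\lambda_{n}$. The only remaining task is to upgrade the evident nonnegativity of $R_{0}$ to strict entrywise positivity. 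If $A$ is irreducible, Perron-Frobenius guarantees that $v_{0}$ is strictly positive, so $R_{0}$ is already entrywise positive and we may take $R=R_{0}$.

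In the reducible case some entries of $v_{0}$ vanish, and the idea is to spread the perturbation across every irreducible summand of $A$. After a permutation, write $A = A_{1}\oplus\cdots\oplus A_{s}$ with each $A_{k}$ irreducible symmetric nonnegative of Perron root $\mu_{k}$ and strictly positive unit Perron eigenvector $u_{k}$, ordered so that $\mu_{1}=\lambda_{0}$, and let $\tilde u_{k}$ be the zero-padded extension of $u_{k}$ to the ambient space. Set
\[
R := \sum_{k,\ell=1}^{s} D_{k\ell}\,\tilde u_{k}\tilde u_{\ell}^{T},
\]
where $D=(D_{k\ell})$ is a symmetric $s\times s$ matrix with strictly positive entries, still to be chosen. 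Because each cross-block entry of $R$ is a positive multiple of an entry of $u_{k}u_{\ell}^{T}$, strict positivity of $D$ forces $R$ to be entrywise positive. The subspace $\mathrm{span}(\tilde u_{1},\ldots,\tilde u_{s})$ is invariant for both $A$ and $R$; on it $A+R$ restricts to $\mathrm{diag}(\mu_{1},\ldots,\mu_{s})+D$, while on its orthogonal complement $A+R$ acts as $A$ does (since $R$ vanishes there). Hence it suffices to produce a symmetric, strictly positive $s\times s$ matrix $D$ such that $\mathrm{diag}(\mu_{1},\ldots,\mu_{s})+D$ has eigenvalues $\mu_{1}+\varepsilon,\mu_{2},\ldots,\mu_{s}$.

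The main obstacle is this small realization problem for $D$. For $s=2$ the trace and determinant conditions produce the explicit relation $b^{2}=(\varepsilon-a)(\mu_{1}-\mu_{2}+a)$ with $a\in(0,\varepsilon)$; since $\mu_{1}\geq\mu_{2}$, every such $a$ yields a strictly positive solution. For general $s$ one proceeds by induction, pairing the top diagonal with successive blocks and applying the $s=2$ formula at each step to absorb a portion of the required spectral shift while preserving all lower eigenvalues and maintaining strict positivity of every off-diagonal entry; alternatively, one invokes a Schur-Horn style construction and post-processes by an orthogonal conjugation supported in the Perron subspace that inflates any remaining zero entries without altering the spectrum.
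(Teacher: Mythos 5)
Your overall strategy is sound, and most of it is correct: decomposing $A$ (after a permutation) into irreducible diagonal blocks, padding the positive Perron vectors $u_k$, and observing that $A+R$ with $R=\sum_{k,\ell}D_{k\ell}\,\tilde u_k\tilde u_\ell^{T}$ acts as $\operatorname{diag}(\mu_1,\dots,\mu_s)+D$ on the Perron subspace and as $A$ on its orthogonal complement is a clean and correct reduction, and your $2\times 2$ computation $b^{2}=(\varepsilon-a)(\mu_1-\mu_2+a)$, $a\in(0,\varepsilon)$, is right. (For calibration: the paper gives no proof of this statement at all; it simply cites Fiedler's Theorem 3.2 and notes that his construction produces $B=A+R$ with $R$ positive symmetric, so a completed version of your argument would be a genuinely self-contained proof.)

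The genuine gap is the realization problem for $s\ge 3$: you must exhibit a symmetric, \emph{entrywise} positive $D$ with $\operatorname{diag}(\mu_1,\dots,\mu_s)+D$ having spectrum $\mu_1+\varepsilon,\mu_2,\dots,\mu_s$, and neither of your two suggestions does this. If ``pairing the top diagonal with successive blocks'' means adding $2\times2$ couplings only in positions $(1,k)$, the resulting $D$ is an arrow matrix whose entries $D_{k\ell}$ with $2\le k\ne\ell$ vanish, so it is not entrywise positive; moreover, after the first pairing the relevant principal submatrix is no longer diagonal, so the $s=2$ formula does not literally apply to the next pair. The Schur--Horn alternative also fails as stated: Schur--Horn gives no control over the signs of off-diagonal entries, and an orthogonal conjugation ``supported in the Perron subspace'' conjugates the $\operatorname{diag}(\mu)$ part as well, so it destroys exactly the structure (spectrum of $\operatorname{diag}(\mu)+D$ with $D>0$) you need to preserve. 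The step can be repaired with the rank-one bordering already quoted in the paper (Theorem~\ref{sotos}, the symmetric Rado/Fiedler perturbation): fix $0<\varepsilon_2<\cdots<\varepsilon_s=\varepsilon$, and suppose you have built an entrywise positive symmetric $M'=\operatorname{diag}(\mu_1,\dots,\mu_{k-1})+D'$ with spectrum $\mu_1+\varepsilon_{k-1},\mu_2,\dots,\mu_{k-1}$. Since $M'$ has positive off-diagonal entries it is irreducible, so its Perron root $\mu_1+\varepsilon_{k-1}$ has a unit eigenvector $w>0$. Then set
\[
M=\begin{pmatrix} M'+a\,ww^{T} & b\,w\\ b\,w^{T} & \mu_{k}+c\end{pmatrix},\qquad a+c=\varepsilon_{k}-\varepsilon_{k-1},\quad b^{2}=\bigl(\varepsilon_{k}-\varepsilon_{k-1}-a\bigr)\bigl(\mu_{1}+\varepsilon_{k-1}-\mu_{k}+a\bigr),
\]
with $0<a<\varepsilon_{k}-\varepsilon_{k-1}$. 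By Theorem~\ref{sotos} applied to $M'\oplus(\mu_k)$ with the orthonormal eigenvectors $(w,0)^{T},(0,1)^{T}$ and $C=\begin{pmatrix} a&b\\ b&c\end{pmatrix}$, the matrix $M$ has spectrum $\mu_1+\varepsilon_k,\mu_2,\dots,\mu_k$, and $M-\operatorname{diag}(\mu_1,\dots,\mu_k)=\begin{pmatrix} D'+a\,ww^{T} & b\,w\\ b\,w^{T} & c\end{pmatrix}$ is entrywise positive because $w>0$ and $a,b,c>0$. Iterating up to $k=s$ supplies the missing $D$ and completes your proof.
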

\bigskip

Next, we improve Theorem \ref{fdl3} to bisymmetric case.

\begin{thm} \label{pbs} 
If $Q$ is a nonnegative bisymmetric matrix with all eigenvalues as $\lambda_0, \lambda_1 \geq \ldots \geq \lambda_n$ and $\varepsilon > 0$ then there is a positive bisymmetric matrix $\hat{Q}$ with all eigenvalues as $\lambda_0+\varepsilon, \lambda_1,\ldots, \lambda_n.$
\begin{proof}
Let $Q$ be a nonnegative bisymmetric matrix with all eigenvalues as $\lambda_0, \lambda_1,$ $\ldots,$ $\lambda_n.$

\textit{Case 1}: Suppose $n$ is odd. Then we can write $Q=\begin{pmatrix}
A & JCJ \\
C & JAJ
\end{pmatrix}$
, where $A$ and $C$ are $\frac{n+1}{2} \times \frac{n+1}{2}$ matrices, $A=A^T$ and $C^T=JCJ.$ By Theorem \ref{canthm2} and Lemma \ref{canlemma1}, all eigenvalues of $Q$ are obtained from the eigenvalues of $A+JC$ and $A-JC$, and $\lambda_0$ is an eigenvalue of $A+JC.$ If $\lambda_0, \lambda_{1_1}, \lambda_{1_2},\ldots, \lambda_{1_{\frac{n-1}{2}}}$ are all eigenvalues of $A+JC$, by Theorem \ref{fdl3}, there is a positive symmetric matrix $R$ such that $A+JC+R$ has all eigenvalues as $\lambda_0+\varepsilon, \lambda_{1_1}, \lambda_{1_2},..., \lambda_{1_{\frac{n-1}{2}}}.$ So, the matrix
$$\begin{pmatrix}
A+\frac{1}{2}R & JCJ+\frac{1}{2}RJ \\
C+\frac{1}{2}JR & JAJ+\frac{1}{2}JRJ
\end{pmatrix}$$ is the one we are looking for.

\textit{Case 2}: Suppose $n$ is even. Then we can write $Q=\begin{pmatrix}
A   & x  & JCJ \\
x^T & p  & x^TJ \\
C   & Jx & JAJ
\end{pmatrix}$
, where $A$ and $C$ are $\frac{n}{2} \times \frac{n}{2}$ matrices, $A=A^T$ and $C^T=JCJ$. By Theorem \ref{canthm2} and Lemma \ref{canlemma1}, all eigenvalues of $Q$ obtained from eigenvalues of
$\begin{pmatrix}
p & \sqrt{2}x^T \\
\sqrt{2}x & A+JC
\end{pmatrix}$
and $A-JC$ and $\lambda_0$ is an eigenvalue of $\begin{pmatrix}
p & \sqrt{2}x^T \\
\sqrt{2}x & A+JC
\end{pmatrix}.$ If $\lambda_0, \lambda_{1_1}, \lambda_{1_2},\ldots, \lambda_{1_{\frac{n}{2}}}$ are all eigenvalues of
$\begin{pmatrix}
p & \sqrt{2}x^T \\
\sqrt{2}x & A+JC
\end{pmatrix}$, by Theorem \ref{fdl3}, there is a positive symmetric
$R=\begin{pmatrix}
c & y^T \\
y & R_1
\end{pmatrix}$, where $R_1$ is an $\frac{n}{2} \times \frac{n}{2}$ positive symmetric matrix, and the matrix
$\begin{pmatrix}
p+c         & \sqrt{2}x^T + y^T \\
\sqrt{2}x+y & A+JC+R_1
\end{pmatrix}$ has all eigenvalues as $\lambda_0+\varepsilon, \lambda_{1_1}, \lambda_{1_2}, \ldots, \lambda_{1_{\frac{n}{2}}}.$ So, the matrix
$$\begin{pmatrix}
A+\frac{1}{2}R_1           & x+\frac{1}{\sqrt{2}}y   &JCJ+\frac{1}{2}RJ_1 \\
x^T+\frac{1}{\sqrt{2}}y^T  & p+c                     &(x^T+\frac{1}{\sqrt{2}}y^T)J \\
C+\frac{1}{2}JR_1          &J(x+\frac{1}{\sqrt{2}}y) &JAJ+\frac{1}{2}JR_1J
\end{pmatrix}$$ is our desired one.
\end{proof}
\end{thm}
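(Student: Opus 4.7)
The plan is to apply Fiedler's perturbation result (Theorem~\ref{fdl3}) to the symmetric summand of the Cantoni--Butler orthogonal block decomposition of $Q$ that contains the Perron root, and then lift that perturbation back to a bisymmetric perturbation of $Q$. The case division is by the parity of the order $n+1$, following Theorem~\ref{canthm1} and Theorem~\ref{canthm2}.

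In the even-order case, write $Q = \begin{pmatrix} A & JCJ \\ C & JAJ \end{pmatrix}$ with $A=A^T$ and $C^T=JCJ$. By Theorem~\ref{canthm2}, $Q$ is orthogonally similar to $(A-JC) \oplus (A+JC)$, and by Lemma~\ref{canlemma1}(a) the Perron root $\lambda_0$ of $Q$ is the Perron root of the nonnegative symmetric matrix $A+JC$. I would apply Theorem~\ref{fdl3} to $A+JC$ to obtain a positive symmetric matrix $R$ of the same size such that $(A+JC)+R$ has the spectrum of $A+JC$ with $\lambda_0$ replaced by $\lambda_0+\varepsilon$. Then I set $\hat{A}=A+\tfrac{1}{2}R$, $\hat{C}=C+\tfrac{1}{2}JR$, and define $\hat{Q}=\begin{pmatrix} \hat{A} & J\hat{C}J \\ \hat{C} & J\hat{A}J \end{pmatrix}$. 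Using $J^2=I$ and $R^T=R$ one checks $\hat{A}^T=\hat{A}$ and $\hat{C}^T=J\hat{C}J$ (so $\hat{Q}$ is bisymmetric), together with $\hat{A}+J\hat{C}=(A+JC)+R$ and $\hat{A}-J\hat{C}=A-JC$ (so by Theorem~\ref{canthm2} the spectrum of $\hat{Q}$ is the required one); positivity of $\hat{Q}$ is immediate since $A,C\geq 0$ and $R$, hence $JR$, has strictly positive entries. The odd-order case is parallel: write $Q$ in the form of Theorem~\ref{canthm1}(2); by Lemma~\ref{canlemma1}(b) the summand containing $\lambda_0$ is $M=\begin{pmatrix} p & \sqrt{2}x^T \\ \sqrt{2}x & A+JC \end{pmatrix}$; apply Theorem~\ref{fdl3} to $M$ to obtain a positive symmetric $R=\begin{pmatrix} c & y^T \\ y & R_1 \end{pmatrix}$ with $M+R$ having the required spectrum, and then set $\hat{p}=p+c$, $\hat{x}=x+\tfrac{1}{\sqrt{2}}y$, $\hat{A}=A+\tfrac{1}{2}R_1$, $\hat{C}=C+\tfrac{1}{2}JR_1$, and assemble the corresponding bisymmetric matrix.

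The main subtlety is the lifting step, since $A+JC$ (respectively $M$) does not determine $A$ and $C$ separately, so I must choose a bisymmetric perturbation of $Q$ whose ``$+$'' summand equals $R$, whose ``$-$'' summand $A-JC$ is unchanged (so its eigenvalues are preserved), and that respects the defining identities $\hat{A}^T=\hat{A}$ and $\hat{C}^T=J\hat{C}J$. The symmetric split $\hat{A}=A+\tfrac{1}{2}R$, $\hat{C}=C+\tfrac{1}{2}JR$ satisfies all three simultaneously precisely because $R=R^T$ and $J^2=I$; this is the crux of the argument. Strict positivity of $\hat{Q}$ then follows automatically from the strict positivity of $R$ delivered by Theorem~\ref{fdl3}.
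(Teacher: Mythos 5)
Your proposal is correct and follows essentially the same route as the paper: the same parity case split via Theorem~\ref{canthm1}/\ref{canthm2}, the same use of Lemma~\ref{canlemma1} to locate $\lambda_0$ in the ``$+$'' summand, Fiedler's Theorem~\ref{fdl3} applied to $A+JC$ (resp.\ the bordered matrix $M$), and the identical lifting $\hat{A}=A+\tfrac{1}{2}R$, $\hat{C}=C+\tfrac{1}{2}JR$ (resp.\ with $R_1$, $c$, $y$). Your explicit verification that $\hat{C}^T=J\hat{C}J$, $\hat{A}+J\hat{C}=(A+JC)+R$ and $\hat{A}-J\hat{C}=A-JC$ just makes precise what the paper leaves implicit.
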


\bigskip

\begin{cor}
If $Q$ is a nonnegative bisymmetric matrix with all eigenvalues as $\lambda_0 \geq \lambda_1 \geq \ldots \geq \lambda_n$ and $\varepsilon > 0$ then there is positive bisymmetric matrix $P$ such that $Q+P$ has all eigenvalues as $\lambda_0+\varepsilon, \lambda_1,\ldots, \lambda_n.$
\begin{proof}
This result follows from the construction in the proof of Theorem \ref{pbs}.
\end{proof}
\end{cor}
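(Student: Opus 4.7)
The plan is to unwind the construction in the proof of Theorem \ref{pbs} and check that the matrix $\hat{Q}$ produced there is in fact of the form $Q+P$ for some positive bisymmetric $P$. No new idea is required beyond this bookkeeping; the content of the corollary is the additive statement itself.

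First I would treat the case $n$ odd. There Theorem \ref{fdl3} furnishes a positive symmetric matrix $R$ such that $A+JC+R$ has the shifted spectrum, and the constructed $\hat{Q}$ is
$$\hat{Q}=\begin{pmatrix} A+\tfrac{1}{2} R & JCJ+\tfrac{1}{2} RJ \\ C+\tfrac{1}{2} JR & JAJ+\tfrac{1}{2} JRJ \end{pmatrix} = Q + P, \qquad P:=\tfrac{1}{2}\begin{pmatrix} R & RJ \\ JR & JRJ \end{pmatrix}.$$
Positivity of $P$ is immediate because right/left multiplication by $J$ merely permutes the strictly positive entries of $R$. Bisymmetry reduces to the identities $R^T=R$ (giving $P^T=P$) and $J^2=I$ (giving $JPJ=P$), both of which are routine block computations using the general form of a bisymmetric matrix given in Theorem \ref{canthm1}.

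Next I would treat the case $n$ even. Write the output of Theorem \ref{fdl3} as $R=\begin{pmatrix} c & y^T \\ y & R_1 \end{pmatrix}$ with $R_1$ positive symmetric and $c>0$, $y>0$. Inspecting the construction in the proof of Theorem \ref{pbs}, $\hat{Q}$ takes the form $Q+P$ with
$$P = \begin{pmatrix} \tfrac{1}{2} R_1 & \tfrac{1}{\sqrt{2}}\,y & \tfrac{1}{2} R_1 J \\ \tfrac{1}{\sqrt{2}}\,y^T & c & \tfrac{1}{\sqrt{2}}\,y^T J \\ \tfrac{1}{2} JR_1 & \tfrac{1}{\sqrt{2}}\,Jy & \tfrac{1}{2} JR_1 J \end{pmatrix}.$$
Positivity is again clear from positivity of $c$, $y$, and $R_1$, and bisymmetry follows from $R_1^T=R_1$ together with a direct verification of $JPJ=P$ against the odd-order bisymmetric template of Theorem \ref{canthm1}.

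The only step requiring any care is the bisymmetry check for $P$ in each parity case, and this is elementary block algebra using $J^T=J$ and $J^2=I$. I do not anticipate any genuine obstacle; the corollary is essentially a restatement of Theorem \ref{pbs} that records the perturbation matrix explicitly.
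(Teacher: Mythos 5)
Your proposal is correct and is essentially the paper's own argument: the paper's proof simply invokes the construction in Theorem \ref{pbs}, and you have unwound that construction to exhibit the perturbation $P$ explicitly in both parity cases, verifying its positivity and bisymmetry (also silently correcting the paper's typo $RJ_1$ to $R_1J$ in the even-$n$ case). No gap remains.
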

\bigskip

\begin{thm} \label{pboro} 
Let $\lambda_0 \geq \lambda_1 \geq \ldots \geq \lambda_M \geq 0 > \lambda_{M+1} \geq \ldots \geq \lambda_n$ be real numbers. If there exists a partition $\Lambda=\Lambda_1 \cup \Lambda_2 \cup \ldots \cup \Lambda_S$ of $\lbrace\lambda_{M+1},\lambda_{M+2},\ldots,\lambda_n\rbrace$ with $|\Lambda_j|$ is odd for $j=1,2,\ldots, \emph{min} \lbrace M, S\rbrace,$ and $\displaystyle T_k =\sum_{\lambda_i \in \Lambda_k} \lambda_{i}$, $T_S \geq T_{S-1} \geq \ldots \geq T_1$ satisfying

\qquad (1) $\displaystyle \lambda_0 + \sum_{i\in K, i<k} (\lambda_i + T_i)+ T_k > 0$, \qquad \quad for all $k \in K$, and

\qquad (2) $\displaystyle \lambda_0 + \sum_{i\in K} (\lambda_i + T_i)+ \sum_{j=M+1}^S T_j  > 0,$
\\where $K=\lbrace i \in \lbrace 1,2,\ldots, \emph{min} \lbrace M,S \rbrace \rbrace \mid \lambda_i + T_i < 0\rbrace$, then there is a positive bisymmetric matrix with all eigenvalues as $\lambda_0,\lambda_1,...,\lambda_n.$
\begin{proof}
Let

$$\varepsilon = \text{min} \left\lbrace \min_{k \in K} \lbrace \lambda_0 + \sum_{i\in K, i<k} (\lambda_i + T_i)+ T_k \rbrace , \lambda_0 + \sum_{i\in K} (\lambda_i + T_i)+ \sum_{j=M+1}^S T_j \right\rbrace.$$

Then the system $\lambda_0 - \varepsilon, \lambda_1,\ldots, \lambda_n$ satisfies the condition in Theorem \ref{boro4} . Thus there is an  $(n+1) \times (n+1)$ nonnegative bisymmetric matrix $Q$ with all eigenvalues as $\lambda_0 - \varepsilon, \lambda_1, \ldots, \lambda_n.$ By applying Theorem \ref{pbs}, the proof is complete.
\end{proof}
\end{thm}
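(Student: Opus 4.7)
The plan is to reduce this strict-inequality (``positive'') statement to the non-strict (``nonnegative'') version already proved in Theorem \ref{boro4}, and then use the nonnegative-to-positive upgrade in Theorem \ref{pbs} to recover the prescribed Perron root.

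First, I would introduce the slack available in the hypothesis. Since conditions (1) and (2) hold with strict inequality, I set
$$\varepsilon \;=\; \min\!\left\{\, \min_{k\in K}\!\Big(\lambda_0 + \!\!\sum_{i\in K,\, i<k}\!(\lambda_i+T_i) + T_k\Big),\ \lambda_0 + \sum_{i\in K}(\lambda_i+T_i) + \sum_{j=M+1}^{S} T_j \,\right\},$$
which is strictly positive. This $\varepsilon$ measures exactly how much ``room'' the hypothesis gives us: decreasing $\lambda_0$ by $\varepsilon$ will turn every strict inequality into a non-strict one while keeping the Perron-root position intact (the quantity $\lambda_0$ appears additively in each expression, so subtracting $\varepsilon$ simply converts the minimum to $0$ and leaves the other conditions as inequalities).

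Next, I would verify that the reduced list $\lambda_0-\varepsilon,\lambda_1,\ldots,\lambda_M,\lambda_{M+1},\ldots,\lambda_n$ still fits the framework of Theorem \ref{boro4}. Its sign pattern is unchanged (since $\lambda_0-\varepsilon \geq 0$ by construction, being at least as large as one of the summands that forces $\varepsilon$ to be small), the same partition $\Lambda_1\cup\cdots\cup\Lambda_S$ of the negative part is used with the same odd-size condition on the first $\min\{M,S\}$ blocks, and conditions (1)--(2) in Theorem \ref{boro4} become exactly the shifted versions of our strict inequalities, hence hold with equality in the worst case. Applying Theorem \ref{boro4} then yields a nonnegative bisymmetric matrix $Q$ with spectrum $\{\lambda_0-\varepsilon,\lambda_1,\ldots,\lambda_n\}$.

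Finally, I invoke Theorem \ref{pbs} on $Q$ with the same $\varepsilon>0$: it produces a positive bisymmetric matrix whose spectrum is $\{(\lambda_0-\varepsilon)+\varepsilon,\lambda_1,\ldots,\lambda_n\}=\{\lambda_0,\lambda_1,\ldots,\lambda_n\}$, as required. The only genuinely delicate step is the second one, namely checking that the shifted list really does satisfy the hypotheses of Theorem \ref{boro4}; the partition, the parities of the $|\Lambda_j|$, and the ordering of the $T_k$ are inherited unchanged, so this reduces to the routine arithmetic check that subtracting $\varepsilon$ from $\lambda_0$ in the expressions of conditions (1) and (2) leaves them non-negative, which is immediate from the definition of $\varepsilon$.
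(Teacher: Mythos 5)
Your proposal is essentially the paper's own proof: the authors define exactly the same $\varepsilon$ as the minimum of the two strict-inequality expressions, note that the shifted list $\lambda_0-\varepsilon,\lambda_1,\ldots,\lambda_n$ satisfies Theorem \ref{boro4}, and then apply Theorem \ref{pbs} to restore $\lambda_0$ while making the matrix positive. The only difference is that you spell out the routine verification that the shifted list inherits the hypotheses of Theorem \ref{boro4}, which the paper leaves implicit.
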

\bigskip

\section{The BNIEP with the Prescribe Diagonal Entries}

\bigskip

Note that the condition (2) in Theorem \ref{julio1} and Theorem \ref{julio2} requires an existence of a nonnegative bisymmetric matrix with the prescribed eigenvalues and with the prescribed diagonal entries. So in this section we provide a sufficient condition for the BNIEP with the given diagonal entries. We begin this section by proving the following lemma.

\bigskip

\begin{lem} \label{bidia1}
Let $\alpha_0 \geq \alpha_1 \geq \alpha_2$ be real numbers and $a_0, a_1$ be nonnegative real numbers. Then there is a $3 \times 3$ nonnegative bisymmetric matrix with all eignenvalues as $\alpha_0, \alpha_1, \alpha_2$ and $a_1, a_0, a_1$ are in the main diagonal entries if and only if there is a $j \in \lbrace 1, 2\rbrace$ such that
$$a_1 \geq \alpha_j$$
$$\alpha_0+\alpha_j \geq 2a_1$$
$$\alpha_1+\alpha_2 \leq 2a_1$$
$$\alpha_0+\alpha_1+\alpha_2 = a_0+2a_1$$
\begin{proof}

($\Leftarrow$) It's easy to check that the matrix
$$\begin{pmatrix}
a_1 & \rho & a_1-\alpha_j \\
\rho&a_0&\rho\\
a_1-\alpha_j & \rho &a_1
\end{pmatrix},$$
where $\rho=\sqrt{\dfrac{(\alpha_0-a_0)(\alpha_0+\alpha_j-2a_1)}{2}}$,
is a $3 \times 3$ nonnegative bisymmetric matrix with all eigenvalues as $\alpha_0, \alpha_1, \alpha_2.$

($\Rightarrow$) Let
$Q=\begin{pmatrix}
a_1 & \rho & \xi \\
\rho& a_0  & \rho \\
\xi & \rho & a_1
\end{pmatrix}$
have all eigenvalues as $\alpha_0, \alpha_1, \alpha_2.$ Since the sum of all eigenvalues is equal to the trace of the matrix, we have $\alpha_0+\alpha_1+\alpha_2 = a_0+2a_1.$ Moreover, by Theorem \ref{canthm2}, $Q$ is orthogonally similar to the matrix
$$\begin{pmatrix}
a_1-\xi&              &               \\
       & a_0          &  \sqrt{2}\rho \\
       & \sqrt{2}\rho &  a_1+\xi
\end{pmatrix}.$$
By Lemma \ref{canlemma1}, $\alpha_0$ is an eigenvalue of $\begin{pmatrix}
a_0 & \sqrt{2}\rho \\
\sqrt{2}\rho & a_1+\xi
\end{pmatrix}.$ Then $a_1-\xi$ must be $\alpha_1$ or $\alpha_2$.

\textit{Case I}: $a_1-\xi=\alpha_1$. Then $a_1-\alpha_1=\xi \geq 0$ and it implies that $a_1 \geq \alpha_1$ and $2a_1 \geq \alpha_1+\alpha_2.$ Since the matrix $\begin{pmatrix}
a_0 & \sqrt{2}\rho \\
\sqrt{2}\rho & a_1+\xi
\end{pmatrix}$ has eigenvalues $\alpha_0, \alpha_2$, its characteristic polynomial  $x^2-(a_0+a_1+\xi)x+a_0a_1+a_0\xi-2\rho^2$=$x^2-(a_0+2a_1-\alpha_1)x+2a_0a_1-a_0\alpha_1-2\rho^2$ is equal to $x^2-(\alpha_0+\alpha_2)+\alpha_0\alpha_2.$ So,

\qquad \qquad \quad 0  $\leq$ $2\rho^2 = 2a_0a_1-a_0\alpha_1-\alpha_0\alpha_2$

\qquad \qquad \qquad \qquad \quad $=(\alpha_0+\alpha_1+\alpha_2-a_0)a_0-a_0\alpha_1-\alpha_0\alpha_2$

\qquad \qquad \qquad \qquad \quad $=\alpha_0a_0+\alpha_1a_0+\alpha_2a_0-a_0^2-a_0\alpha_1-\alpha_0\alpha_2$

\qquad \qquad \qquad \qquad \quad $=\alpha_0a_0+\alpha_2a_0-a_0^2-\alpha_0\alpha_2$

\qquad \qquad \qquad \qquad \quad $=(\alpha_0-a_0)(a_0-\alpha_2).$

Since $2a_1 \geq \alpha_1+\alpha_2$ and $a_0+2a_1=\alpha_0+\alpha_1+\alpha_2$, we have $\alpha_0-a_0 \geq 0$. This implies that $a_0 \geq \alpha_2$. This shows that $j=1$ satisfies our conditions.

\textit{Case II}: $a_1-\xi=\alpha_2$. By the similar argument, we have $j=2$ satisfies our conditions.
\end{proof}
\end{lem}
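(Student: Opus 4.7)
The plan is to parametrize the family of $3\times 3$ nonnegative bisymmetric matrices with diagonal $(a_1, a_0, a_1)$ as
$$Q(\rho,\xi) = \begin{pmatrix} a_1 & \rho & \xi \\ \rho & a_0 & \rho \\ \xi & \rho & a_1 \end{pmatrix}, \qquad \rho,\xi \geq 0,$$
and diagonalize the eigenvalue condition via the Cantoni–Butler decomposition (Theorem \ref{canthm2}), which shows that $Q(\rho,\xi)$ is orthogonally similar to the block form
$$(a_1 - \xi) \ \oplus \ \begin{pmatrix} a_0 & \sqrt{2}\rho \\ \sqrt{2}\rho & a_1 + \xi \end{pmatrix}.$$
Moreover, by Lemma \ref{canlemma1}, the Perron root $\alpha_0$ must be an eigenvalue of the $2\times 2$ block. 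So one eigenvalue of $Q$ is $a_1 - \xi$ and the other two are the eigenvalues of the $2\times 2$ block, which contain $\alpha_0$.

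For the sufficiency direction ($\Leftarrow$), given $j\in\{1,2\}$ satisfying the four hypotheses, I will set $\xi := a_1 - \alpha_j$ (nonnegative by $a_1 \geq \alpha_j$) so that $a_1-\xi = \alpha_j$, and then solve for $\rho$ from the determinant of the $2\times 2$ block. A direct calculation using the trace identity $\alpha_0+\alpha_1+\alpha_2 = a_0+2a_1$ yields
$$2\rho^2 \;=\; (\alpha_0 - a_0)(\alpha_0 + \alpha_j - 2a_1),$$
and both factors are nonnegative by the third hypothesis (which gives $\alpha_0 - a_0 = 2a_1 - \alpha_1 - \alpha_2 \geq 0$ via trace) and the second hypothesis respectively. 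Hence $\rho$ is real and nonnegative, and the product/sum comparison in the $2\times 2$ block confirms that its two eigenvalues are precisely $\alpha_0$ and the remaining $\alpha_k$. This matches what the author's explicit formula gives.

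For the necessity direction ($\Rightarrow$), starting from a nonnegative bisymmetric $Q(\rho,\xi)$ with spectrum $\{\alpha_0, \alpha_1, \alpha_2\}$, the trace of $Q$ gives the identity $\alpha_0+\alpha_1+\alpha_2 = a_0 + 2a_1$ immediately. Since $\alpha_0$ must be an eigenvalue of the $2\times 2$ block by Lemma \ref{canlemma1}, the eigenvalue $a_1-\xi$ is one of $\alpha_1, \alpha_2$; calling this index $j$, the condition $\xi \geq 0$ delivers $a_1 \geq \alpha_j$. The remaining eigenvalues of the $2\times 2$ block are $\alpha_0$ and the other $\alpha_k$, so comparing the determinant gives $2\rho^2 = (\alpha_0 - a_0)(a_0 - \alpha_k)$. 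Now $\alpha_0 \geq a_0$ holds because $\alpha_0$ is the largest eigenvalue of the $2\times 2$ symmetric block containing $a_0$ on its diagonal (Rayleigh quotient), and $\rho^2 \geq 0$ then forces $a_0 \geq \alpha_k$, which via the trace identity rearranges to $\alpha_0 + \alpha_j \geq 2a_1$. Finally, $\alpha_1+\alpha_2 \leq 2a_1$ follows from $\alpha_0 \geq a_0$ and the trace identity.

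The main obstacle I anticipate is the algebraic packaging: reconciling the two superficially different expressions for $2\rho^2$ (one as $(\alpha_0-a_0)(\alpha_0+\alpha_j-2a_1)$ useful for sufficiency and one as $(\alpha_0-a_0)(a_0-\alpha_k)$ useful for necessity) and tracking the correct case $j \in \{1,2\}$ in terms of which of $\alpha_1, \alpha_2$ exits the $2\times 2$ block. Beyond this bookkeeping, the argument is a direct application of Theorems \ref{canthm1}–\ref{canthm2} and Lemma \ref{canlemma1}.
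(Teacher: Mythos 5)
Your proposal is correct and takes essentially the same route as the paper's own proof: the same Cantoni--Butler parametrization $Q(\rho,\xi)$ with diagonal $(a_1,a_0,a_1)$, the same orthogonal reduction to $a_1-\xi$ plus the $2\times 2$ block, the same use of Lemma \ref{canlemma1} to force $\alpha_0$ into that block, the trace identity, and the same determinant computation $2\rho^2=(\alpha_0-a_0)(a_0-\alpha_k)$ together with the explicit matrix for sufficiency. The only (cosmetic) deviation is that you get $\alpha_0\geq a_0$ from the Rayleigh-quotient bound on the block's diagonal rather than from $2a_1\geq\alpha_1+\alpha_2$ and the trace identity, which in fact handles both cases $j=1,2$ uniformly; like the paper, you pass over the degenerate case $\alpha_0=a_0$ in concluding $a_0\geq\alpha_k$ from $\rho^2\geq 0$, but there the needed inequality follows at once since $a_0-\alpha_k=\alpha_0-(a_1+\xi)\geq 0$.
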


\medskip

The next two theorems are the main results in this section.

\medskip

\begin{thm} \label{bidiagonalodd}
If $\lambda_0 \geq \lambda_1 \geq \ldots \geq \lambda_{2m}$ are real numbers and $a_0, a_1,\ldots a_m$ are nonnegative real numbers such that they satisfies all of these conditions;

(1) For each $k=1,2, \ldots m$ there is $j_k \in \lbrace 2k-1, 2k \rbrace$ such that $a_k \geq \lambda_{j_k}$ and

\qquad $\displaystyle \sum_{i=0}^{2(k-1)} \lambda_i+\lambda_{j_k} \geq 2\sum_{i=1}^k a_i,$

(2) $2a_k \geq \lambda_{2k-1} + \lambda_{2k}$, for each $k=1,2, \ldots, m$, and

(3) $\displaystyle \sum_{i=0}^{2m} \lambda_i= a_0+2\sum_{i=1}^m a_i,$ \\
then there is a nonnegative bisymmetric matrix with all eigenvalues as $\lambda_0, \ldots, \lambda_{2m}$ and with the diagonal entries as $a_m, a_{m-1}, \ldots, a_1, a_0, a_1, \ldots, a_{m-1}, a_m.$

\begin{proof} We prove this theorem by induction on $m.$ If $m=1$ then we have
$a_1 \geq \lambda_{j_1} \text{ and } \lambda_0 + \lambda_{j_1} \geq 2a_1 \text{ for some } j_1 \in \lbrace 1, 2\rbrace$. Moreover, $2a_1 \geq \lambda_1+\lambda_2$ and
$\lambda_0+\lambda_1+\lambda_2=a_0+2a_1$. So, all conditions in lemma \ref{bidia1} hold.

Next, let $m \geq 2$ and suppose the assertion is true for all systems such that $M < m$. Define

\qquad \qquad \qquad \qquad \qquad \qquad $\lambda_0^\prime \quad = \quad \lambda_0+\lambda_1+\lambda_2-2a_1,$

\qquad \qquad \qquad \qquad \qquad \qquad $\lambda_i^\prime \quad = \quad \lambda_{i+2}, \qquad \qquad \qquad i = 1,2, \ldots, 2m-2,$

\qquad \qquad \qquad \qquad \qquad \qquad $a_0^\prime \quad = \quad a_0,$

and \qquad \qquad \qquad \qquad \qquad $a_i^\prime \quad = \quad a_{i+1},$
\qquad \qquad \qquad $i=1, \ldots, m-1.$

Now, we show that $\lambda_0^\prime, \ldots, \lambda_{2(m-1)}^\prime, a_0^\prime, \ldots, a_{m-1}^\prime$ satisfy all conditions of the assumption.

(i) For $k=1, \ldots, m-1,$ set $j_k^\prime = j_{k+1}-2$. Obviously, $j_k^\prime \in \lbrace 2k-1, 2k \rbrace.$ Moreover,
$$a_k^\prime = a_{k+1} \geq \lambda_{j_{k+1}}=\lambda_{j_{k+1}+2-2}=\lambda_{j_k^\prime+2}=\lambda_{j_k^\prime}^\prime,$$
and

\qquad \qquad \quad \ \ $\displaystyle \sum_{i=0}^{2(k-1)} \lambda_i^\prime+\lambda_{j_{k}^\prime}^\prime = \lambda_0^\prime+\sum_{i=1}^{2(k-1)} \lambda_i^\prime + \lambda_{j_{k}^\prime}^\prime$

\qquad \qquad \qquad \qquad \qquad \qquad $\displaystyle=\lambda_0+\lambda_1+\lambda_2-2a_1+\sum_{i=1}^{2(k-1)} \lambda_{i+2} + \lambda_{j_{k}^\prime+2}$

\qquad \qquad \qquad \qquad \qquad \qquad  $\displaystyle =\sum_{i=0}^{2k} \lambda_i -2a_1+\lambda_{j_{k+1}}$

\qquad \qquad \qquad \qquad \qquad \qquad  $\displaystyle \geq 2\sum_{i=1}^{k+1} a_i-2a_1$

\qquad \qquad \qquad \qquad \qquad \qquad  $\displaystyle =2\sum_{i=2}^{k+1} a_i$

\qquad \qquad \qquad \qquad \qquad \qquad  $\displaystyle =2\sum_{i=1}^{k} a_i^\prime.$

(ii) For $k=1, \ldots, m-1$ we have

\qquad \qquad \qquad \qquad \qquad $2a_k^\prime=2a_{k+1} \geq \lambda_{2k+1}+\lambda_{2k+2} = \lambda_{2k-1}^\prime+\lambda_{2k}^\prime.$

(iii) Obviously, $\displaystyle \sum_{i=0}^{2(m-1)} \lambda_i^\prime = a_0^\prime + 2\sum_{i=1}^{m-1} a_i^\prime.$

Then, by the induction hypothesis, there is a nonnegative bisymmetric matrix $A$ with all eigenvalues as $\lambda_0^\prime, \ldots, \lambda_{2m-2}^\prime$ and its diagonal entries are $a_{m-1}^\prime, \ldots,$ $a_1^\prime,$ $a_0^\prime, a_1^\prime,$ $\ldots,$ $a_{m-1}^\prime.$

Now, we show that $\lambda_0^\prime \geq \lambda_1^\prime$.
If $\lambda_{j_2}=\lambda_3$ then $a_2 \geq \lambda_3$ and $\lambda_0+\lambda_1+\lambda_2+\lambda_3 \geq 2a_1+2a_2.$
So, $\lambda_0+\lambda_1+\lambda_2-2a_1-\lambda_3 \geq 2a_2-2\lambda_3 \geq 0.$
If $\lambda_{j_2} = \lambda_4$ then $a_2 \geq \lambda_4$ and $\lambda_0+\lambda_1+\lambda_2+\lambda_4 \geq 2a_1+2a_2.$
So, $\lambda_0+\lambda_1+\lambda_2-2a_1-\lambda_3 \geq 2a_2-\lambda_3-\lambda_4 \geq 0$, by the condition $(2)$.

Finally, let $$\widehat{Q} = \begin{pmatrix}
a_1                 & \rho             & a_1-\lambda_{j_1}\\
\rho                & \lambda_0^\prime & \rho \\
a_1-\lambda_{j_1}   &   \rho           &a_1
\end{pmatrix},$$ where $\rho=\sqrt{\frac{(2a_1-\lambda_1-\lambda_2)(\lambda_0+\lambda_{j_1}-2a_1)}{2}}$. Then it is easy to see that $\widehat{Q}$ has all eigenvalues as $\lambda_0, \lambda_1, \lambda_2$. By Lemma \ref{lep}, show that the matrix
$$\begin{pmatrix}
a_1&\rho u & a_1-\lambda_{j_1} \\
\rho u^T&A&\rho u^T \\
a_1-\lambda_{j_1}&\rho u&a_1
\end{pmatrix},$$
where $u$ is a unit eigenvector of $A$ corresponding to $\lambda_0^\prime$ in which $Ju=u$, has all eigenvalues as $\lambda_0, \lambda_1, \ldots, \lambda_{2m}$. The sequence of diagonal entries can be permuted by some permutation matrix. This completes the proof.
\end{proof}
\end{thm}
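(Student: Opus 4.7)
The plan is induction on $m$, following the template already used for Lemma \ref{bidia1}. The base case $m=1$ is immediate: conditions (1), (2), (3) of the theorem reduce exactly to the hypotheses of Lemma \ref{bidia1}, which supplies an explicit $3\times 3$ realization.

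For the inductive step, I would absorb $\lambda_1,\lambda_2$ and the diagonal entry $a_1$ into a shorter system by setting
\[
  \lambda_0'=\lambda_0+\lambda_1+\lambda_2-2a_1,\qquad \lambda_i'=\lambda_{i+2}\ (1\leq i\leq 2m-2),
\]
\[
  a_0'=a_0,\qquad a_i'=a_{i+1}\ (1\leq i\leq m-1),\qquad j_k'=j_{k+1}-2.
\]
The reduced system has length $2m-1$ and $m-1$ off-center diagonal entries, and one checks that it satisfies conditions (1)-(3). Conditions (2) and (3) are transparent shifts; condition (1) unwinds via
\[
  \sum_{i=0}^{2(k-1)}\lambda_i'+\lambda_{j_k'}'=\sum_{i=0}^{2k}\lambda_i-2a_1+\lambda_{j_{k+1}}\geq 2\sum_{i=1}^{k+1}a_i-2a_1=2\sum_{i=1}^{k}a_i',
\]
using condition (1) of the original system at the next level $k+1$. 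A short case split on whether $j_2=3$ or $j_2=4$, combined with conditions (1) and (2) at $k=2$, confirms that $\lambda_0'\geq\lambda_3=\lambda_1'$, so the reduced list is still properly ordered.

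By the inductive hypothesis, there is a $(2m-1)\times(2m-1)$ nonnegative bisymmetric matrix $A$ with spectrum $\lambda_0',\lambda_3,\ldots,\lambda_{2m}$ and diagonal $a_m,\ldots,a_2,a_0,a_2,\ldots,a_m$, and by Lemma \ref{canlemma2} it admits a unit nonnegative eigenvector $u$ for $\lambda_0'$ with $Ju=u$. I would then apply Lemma \ref{lep} with this $A$ together with the $1\times 1$ block $B=(a_1)$ (whose unit Perron vector is trivially $J$-fixed), taking
\[
  \xi=a_1-\lambda_{j_1},\qquad \rho=\sqrt{\frac{(2a_1-\lambda_1-\lambda_2)(\lambda_0+\lambda_{j_1}-2a_1)}{2}}\,;
\]
both are nonnegative by conditions (1) and (2) at $k=1$. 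The associated $3\times 3$ matrix $\widehat{C}$ is precisely the one produced in Lemma \ref{bidia1} and therefore has eigenvalues $\lambda_0,\lambda_1,\lambda_2$. Lemma \ref{lep} then assembles a nonnegative bisymmetric matrix of size $2m+1$ whose full spectrum is $\lambda_0,\lambda_1,\ldots,\lambda_{2m}$ and whose diagonal is a permutation of $a_m,\ldots,a_1,a_0,a_1,\ldots,a_m$; a conjugation by a permutation commuting with $J$ arranges the entries in the prescribed order.

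The main obstacle is not any deep structural issue but the careful bookkeeping in the reduction: verifying that condition (1) survives the reindexing for every $k<m$ and that the new leading value $\lambda_0'$ dominates $\lambda_3=\lambda_1'$. Once this is in place, everything is assembled from Lemma \ref{bidia1}, Lemma \ref{canlemma2}, and Lemma \ref{lep}.
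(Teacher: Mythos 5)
Your proposal is correct and follows essentially the same route as the paper: induction on $m$ with the same reduction $\lambda_0'=\lambda_0+\lambda_1+\lambda_2-2a_1$, the same verification of conditions (1)--(3) and of $\lambda_0'\geq\lambda_1'$ via the $j_2$ case split, and the same final assembly through the $3\times 3$ matrix of Lemma \ref{bidia1} and Lemma \ref{lep}. Your explicit appeal to Lemma \ref{canlemma2} for the $J$-fixed Perron vector and to a $J$-commuting permutation for reordering the diagonal only makes explicit what the paper leaves implicit.
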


\bigskip

\begin{thm}
If $\lambda_0 \geq \lambda_1 \geq \ldots \geq \lambda_{2m+1}$ and $a_1 \geq \ldots \geq a_m$ and $a_0$ are nonnegative real numbers satisfy all of these conditions;

(1) $a_i \geq \lambda_{2i-1}$ for $i=1, \ldots, m$,

(2) $\displaystyle \sum_{i=0}^k \lambda_i \geq \sum_{i=1}^{ \lceil \frac{k+1}{2} \rceil} a_i+ \sum_{i=1}^{\lfloor \frac{k+1}{2} \rfloor}$ $a_i,$ for $k=1, \ldots, 2m-1$, and

(3) $\displaystyle \sum_{i=0}^{2m+1} \lambda_i=2\sum_{i=0}^m a_i$, \\
then there is a nonnegative bisymmetric matrix with all eigenvalues $\lambda_0, \ldots, \lambda_{2m+1}$ and with the diagonal entries as $a_m, a_{m-1},$ $\ldots,$ $a_0,$ $a_0,$ $a_1,$ $\ldots,$ $a_m.$

\begin{proof}
We prove the theorem by induction on $m.$ If $m=1$ then it is easy to check that the matrix
$$\begin{pmatrix}
a_1     &   \rho            & \rho          & a_1-\lambda_1 \\
\rho    &    a_0            & a_0-\lambda_3 & \rho          \\
\rho    &    a_0-\lambda_3  & a_0           & \rho \\
a_1-\lambda_1 & \rho        & \rho          & a_1
\end{pmatrix},$$
where $\rho=\dfrac{\sqrt{(\lambda_0-2a_0+\lambda_3)(\lambda_0+\lambda_1-2a_1)}}{2}$ is our solution. The induction step is similar to the one in the Theorem \ref{bidiagonalodd}.
\end{proof}
\end{thm}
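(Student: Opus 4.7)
The plan is to prove the theorem by induction on $m$, closely following the argument of Theorem \ref{bidiagonalodd}. The base case $m=1$ is the explicit $4\times 4$ matrix displayed in the statement: by the Cantoni--Butler decomposition of Theorem \ref{canthm2}, this matrix is orthogonally similar to $\mathrm{diag}(\lambda_1,\lambda_3)\oplus M_2$, where the $2\times 2$ block $M_2$ has trace $2a_0+2a_1-\lambda_1-\lambda_3=\lambda_0+\lambda_2$ by condition (3) and determinant $\lambda_0\lambda_2$ for the stated $\rho$, so its eigenvalues are $\lambda_0,\lambda_2$. Nonnegativity of every entry comes from $a_1\ge\lambda_1$ (condition (1) at $i=1$), from $a_0\ge(\lambda_2+\lambda_3)/2\ge\lambda_3$ (combining (2) at $k=1$ with (3)), and from the real-valuedness of $\rho$ verified below.

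For the induction step $m-1\Rightarrow m$, introduce the reduced data
\[
\lambda_0' \;=\; \lambda_0+\lambda_1+\lambda_2-2a_1,\qquad \lambda_i' \;=\; \lambda_{i+2}\ (i=1,\dots,2m-1),\qquad a_0' \;=\; a_0,\quad a_i' \;=\; a_{i+1}\ (i=1,\dots,m-1).
\]
The key preliminary step is to verify $\lambda_0'\ge\lambda_1'=\lambda_3$: condition (2) at $k=2$ yields $\lambda_0+\lambda_1+\lambda_2\ge 2a_1+a_2$, so $\lambda_0'\ge a_2$, and condition (1) at $i=2$ gives $a_2\ge\lambda_3$. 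One then checks the three reduced hypotheses for $m-1$. Condition (1') is a direct consequence of condition (1) at shifted index; condition (3') telescopes to $\sum_i\lambda_i'=\sum_i\lambda_i-2a_1=2\sum_ia_i'$; and condition (2') at $k\in\{1,\dots,2m-3\}$ reduces to condition (2) at $k+2$ because the identity $\sum_{i=0}^k\lambda_i'=\sum_{i=0}^{k+2}\lambda_i-2a_1$ exactly cancels the extra $2a_1=a_1+a_1$ that shows up on the right-hand side of (2) when both the ceiling and floor indices advance by one.

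The induction hypothesis then yields a $2m\times 2m$ nonnegative bisymmetric matrix $A$ realizing the reduced data, and Lemma \ref{canlemma2} supplies a nonnegative unit Perron eigenvector $u_1$ of $A$ for $\lambda_0'$ with $Ju_1=u_1$. I now apply Lemma \ref{lep} with this $A$, with $B=(a_1)$, $v_1=1$, and with
\[
\xi \;=\; a_1-\lambda_1,\qquad \rho \;=\; \sqrt{\tfrac{(\lambda_0+\lambda_1-2a_1)(2a_1-\lambda_1-\lambda_2)}{2}},
\]
chosen so that the $3\times 3$ auxiliary matrix $\widehat{C}$ splits under Cantoni--Butler as $\lambda_1\oplus\Bigl(\begin{smallmatrix}\lambda_0'&\sqrt2\rho\\\sqrt2\rho&2a_1-\lambda_1\end{smallmatrix}\Bigr)$, and the $2\times 2$ block has trace $\lambda_0+\lambda_2$ and determinant $\lambda_0\lambda_2$ by design, so $\widehat{C}$ has eigenvalues $\lambda_0,\lambda_1,\lambda_2$. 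Lemma \ref{lep} then produces the desired $(2m+2)\times(2m+2)$ nonnegative bisymmetric matrix with spectrum $\lambda_0,\lambda_1,\dots,\lambda_{2m+1}$; its diagonal is the palindromic sequence $(a_1,a_m,\dots,a_2,a_0,a_0,a_2,\dots,a_m,a_1)$, and a centrosymmetric permutation (which preserves the bisymmetric structure because it commutes with $J$) rearranges it into the prescribed order $(a_m,\dots,a_0,a_0,\dots,a_m)$.

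The main obstacle is the nonnegativity of the parameters of Lemma \ref{lep}: $\xi\ge 0$ is condition (1) at $i=1$; the first factor of $\rho^2$ is $\lambda_0+\lambda_1-2a_1\ge 0$ by condition (2) at $k=1$; the second factor $2a_1-\lambda_1-\lambda_2\ge 0$ follows from $a_1\ge\lambda_1\ge\lambda_2$. The remaining tedium lies in the mechanical verification of the reduced condition (2'), which demands careful bookkeeping with ceiling and floor indices but introduces no new idea beyond the index-shift pattern already used in Theorem \ref{bidiagonalodd}.
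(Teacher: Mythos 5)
Your proposal is correct and follows essentially the same route as the paper: the same explicit $4\times 4$ matrix (verified via the Cantoni--Butler splitting) for $m=1$, and an induction step that reduces $(\lambda_0,\lambda_1,\lambda_2)\mapsto\lambda_0'=\lambda_0+\lambda_1+\lambda_2-2a_1$ and reattaches the corner entries $a_1$ via Lemma \ref{lep}, exactly as in Theorem \ref{bidiagonalodd}, which is what the paper's "the induction step is similar" refers to. Your verification of the shifted conditions (1')--(3'), of $\lambda_0'\ge\lambda_3$, and of the nonnegativity of $\xi$ and $\rho$ supplies correctly the details the paper omits.
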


\bigskip

\bibliographystyle{amsplain}
\bibliography{BNIEP}

\end{document}